\newcommand{\minitab}[2][l]{\begin{tabular}{#1}#2\end{tabular}}
\newtheorem{theorem}{Theorem}[section]
\newtheorem{conjecture}[theorem]{Conjecture}
\newtheorem{corollary}[theorem]{Corollary}
\newtheorem{lemma}[theorem]{Lemma}
\newtheorem{proposition}[theorem]{Proposition}
\theoremstyle{definition}
\newtheorem{definition}[theorem]{Definition}
\newtheorem{example}[theorem]{Example}
\theoremstyle{remark}
\newtheorem{remark}[theorem]{Remark}
\providecommand{\customgenericname}{}
\newcommand{\newcustomtheorem}[2]{%
  \newenvironment{#1}[1]
  {%
   \renewcommand\customgenericname{#2}%
   \renewcommand\theinnercustomgeneric{##1}%
   \innercustomgeneric
  }
  {\endinnercustomgeneric}
}
\DeclareMathOperator{\qfelia}{\boldsymbol{F}}
\DeclareMathOperator{\dist}{\mathfrak{D}}
\title{Integer Factorization via Continued Fractions and Quadratic Forms}
\author[N.~Murru]{Nadir Murru} 
\address{Dipartimento di Matematica, Università di Trento, Via Sommarive 14, 38123, Povo (TN), Italy}
\email{nadir.murru@unitn.it}
\author[G.~Salvatori]{Giulia Salvatori} 
\address{Dipartimento di Scienze Matematiche L. Lagrange, Politecnico di Torino, Corso Duca degli Abruzzi 24, 10129, Torino (TO), Italy}
\email{giulia.salvatori@polito.it}
\date{}
\subjclass[2010]{11A51, 11Y05} 
\begin{document}

\begin{abstract}
We propose a novel factorization algorithm that leverages the theory underlying the SQUFOF method, including reduced quadratic forms, infrastructural distance, and Gauss composition. We also present an analysis of our method, which has a computational complexity of $O \left( \exp \left( \frac{3}{\sqrt{8}} \sqrt{\ln N \ln \ln N} \right) \right)$, making it more efficient than the classical SQUFOF and CFRAC algorithms. Additionally, our algorithm is polynomial-time, provided knowledge of a (not too large) multiple of the regulator of $\mathbb{Q}(\sqrt{N})$.
\end{abstract}

\maketitle

\section{Introduction}

The integer factorization problem is a fascinating challenge in number theory, with many important theoretical aspects and practical applications (e.g., in cryptography, where the most important public key cryptosystems are based on the hardness of solving this problem for large composite numbers). Indeed, currently, there does not exist a polynomial algorithm for factorizing integers and thus the research in this field is fundamental and active. 

To date, the most efficient algorithm known for factoring integers larger than $10^{100}$ is the General Number Field Sieve designed by Pomerance \cite{Pomerance}, with a heuristic running time of $\exp \left ( \left (\sqrt[3]{\frac{64}{9}} + o(1) \right )(\ln N)^{1/3}(\ln \ln N)^{2/3} \right ) $. 
However, for smaller numbers other algorithms perform better, such as the Quadratic Sieve and SQUare FOrm Factorization (SQUFOF).

SQUFOF algorithm (the best method for numbers between $10^{10}$ and $10^{18}$) was proposed by Shanks in \cite{Shanks} and it is based on the properties of square forms and continued fractions. 
The algorithm is discussed, for example, in \cite{buell} and \cite{cohenacourse}. A rigorous and complete description of the method and its complexity is provided in the well-regarded paper by Gower and Wagstaff \cite{GW07}, where the details are meticulously presented and the algorithm is examined in depth.
Recently, the SQUFOF algorithm has been revisited by Elia \cite{elia2019} who proposed an improvement whose complexity is based on the computation of the regulator of a quadratic field. Another improvement, which exploits a sieve inspired by the Quadratic Sieve, can be found in \cite{BW}.

The other main algorithm, which exploits the theory of continued fractions, is CFRAC \cite{LP31} which was implemented and used for factorizing large numbers (such as the 7\textsuperscript{th} Fermat number) by Morrison and Brillhart \cite{MB75}.

In this paper, we focus on the underlying theory of SQUFOF and, starting from the work by Elia \cite{elia2019}, we improve it, obtaining a novel factorization algorithm whose complexity for factoring the integer $N$ is $O \left ( \exp \left (\frac{3}{\sqrt{8}}\sqrt{\ln N \ln \ln N} \right ) \right )$, making it more efficient than the classical CFRAC and SQUFOF algorithms.
The time complexity is similar to that obtained in \cite{BW}, although their method uses a different approach.

The paper is structured as follows. Sections \ref{sec:pre}, \ref{sec:pre-even} and \ref{sec:pre-qf} introduce the notation and develop the foundational results applied in the design of the factorization algorithm. Specifically, in Section \ref{sec:pre}, we deal with the theory of continued fractions, focusing on the expansion of square roots and the properties of particular sequences arising from these expansions. 
Section \ref{sec:pre-even} analyzes the conditions under which the period of the continued fraction expansion of $\sqrt{N}$ is even and a nontrivial factor of $N$ can be found. Finally, in Section \ref{sec:pre-qf}, we introduce the tools regarding quadratic forms, including the notion of distance, the reduction operator, and the Gauss composition, focusing on the properties of particular sequences of quadratic forms used in the algorithm. In Section \ref{sec:alg}, we present and discuss all the details of our new algorithm and analyze the time complexity, highlighting also the fundamental role played by the computation of the regulator of $\mathbb Q(\sqrt{N})$. Lastly, Section \ref{sec:concl} briefly reports some conclusions.

\section{Sequences from continued fractions of quadratic irrationals} \label{sec:pre}
It is well-known that the continued fraction expansion of quadratic irrationals is periodic and in this case the Lagrange algorithm can be used for obtaining such expansion. Let us consider, without loss of generality, quadratic irrationals
\[ \alpha_0 = \cfrac{P_0 + \sqrt{N}}{Q_0}, \]
with  $P_0, Q_0, N \in \mathbb Z$, $N > 0$ not square, $Q_0 \not=0$, and $Q_0 \mid N - P_0^2$. The continued fractions expansion $[a_0, a_1, \ldots]$ of $\alpha_0$ can be obtained computing
\begin{equation}  \label{quadrirr}
\begin{cases}a_m = \lfloor \alpha_m \rfloor\\
P_{m+1} = a_m Q_m - P_m \\
Q_{m+1} = (N-P_{m+1}^2)/Q_m\end{cases}, \quad \text{where} \quad \alpha_m = \frac{P_m + \sqrt{N}}{Q_m}, \quad m \ge 0.
\end{equation}
We recall that the continued fraction expansion of $\sqrt{N}$ is periodic and has the following particular form
\begin{equation} \label{cf-sqrt}
\sqrt{N} = [a_0, \overline{a_1, a_2, a_3, \ldots, a_{\tau -1}, 2a_0}],
\end{equation}
where the sequence $(a_1, \ldots, a_{\tau-1})$ is a palindrome. 

\begin{remark}\label{remarkperiod}
\emph{
Kraitchik \cite{Kraitchik} showed that the period $\tau$ of the continued fraction expansion of $\sqrt{N}$ is upper bounded by
\(0.72 \sqrt{N} \ln N\), for \(N > 7.\)
However, the period length has irregular behavior as a function of $N$: it can assume any value from $1$, when $N = M^2 + 1$, to values greater than $\sqrt{N} \ln \ln N$ (see \cite{sierp} and \cite{longperiod}, respectively).}
\end{remark}
From now on, we always consider the continued fraction expansion of $\sqrt{N}$ as in \eqref{cf-sqrt} (i.e., quadratic irrationals with $Q_0 = 1$ and $P_0 = 0$). Let $\{p_n\}_{n \ge -1}$ and $\{q_n\}_{n \ge -1}$ be the sequences of numerators and denominators of convergents of $\sqrt{N}$, defined by
\[ p_{-1} = 1, \quad p_0 = a_0, \quad q_{-1} = 0, \quad q_0 = 1 \]
and
\[ p_m = a_m p_{m-1} + p_{m-2}, \quad q_m = a_m q_{m-1} + q_{m-2}, \quad \forall \, m \geq 1.\]
We also recall the following two properties:
\begin{equation}\label{cfmatrici} \begin{pmatrix} a_0 & 1 \cr 1 & 0 \end{pmatrix} \cdots \begin{pmatrix} a_m & 1 \cr 1 & 0 \end{pmatrix} = \begin{pmatrix} p_m & p_{m-1} \cr q_m & q_{m-1} \end{pmatrix} \quad \forall \, m \ge 0, \end{equation}
and
\begin{equation}\label{laprima}
\begin{cases} p_{\tau -2} = -a_0 p_{\tau -1} + Nq_{\tau -1}\\
q_{\tau -2} = p_{\tau -1} -a_0 q_{\tau -1}
\end{cases}.
\end{equation}
Equation \eqref{cfmatrici} follows from straightforward verification, and \eqref{laprima} is proved in \cite[329--332]{sierp}.

We examine the sequence $\left \{ \mathfrak{c}_n  \right \}_{n \ge -1}$, defined by
\[ \mathfrak{c}_m := p_m + q_m \sqrt{N}.\]
The result in the next proposition is also found in \cite{elia2019}. Here, we provide a slightly different proof for completeness.

\begin{proposition}
The sequence $\left \{ \mathfrak{c}_n  \right \}_{n \ge -1}$ satisfies the relation
\begin{equation}\label{ctau}
 \mathfrak{c}_{m + k \tau} = \mathfrak{c}_{m} \mathfrak{c}_{\tau -1}^k \quad  \text{for all } k \in \mathbb{N} \text{ and }m\ge-1. \end{equation}
 
\end{proposition}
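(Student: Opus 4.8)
The statement has the form of a group-like exponent law, so the plan is to isolate the single multiplicative step $\mathfrak{c}_{m+\tau}=\mathfrak{c}_m\mathfrak{c}_{\tau-1}$ and bootstrap from it. First I would dispose of the dependence on $k$ by induction: for $k=0$ the identity is trivial, and if $\mathfrak{c}_{m+k\tau}=\mathfrak{c}_m\mathfrak{c}_{\tau-1}^{k}$ holds for every admissible starting index, then applying it at the index $m+\tau\ge-1$ gives $\mathfrak{c}_{m+(k+1)\tau}=\mathfrak{c}_{(m+\tau)+k\tau}=\mathfrak{c}_{m+\tau}\mathfrak{c}_{\tau-1}^{k}$, and one more use of the step $\mathfrak{c}_{m+\tau}=\mathfrak{c}_m\mathfrak{c}_{\tau-1}$ finishes the induction. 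Everything therefore reduces to the case $k=1$.

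For $k=1$ I would work with the matrix encoding \eqref{cfmatrici}. Writing $M_j=\left(\begin{smallmatrix} a_j & 1\\ 1 & 0\end{smallmatrix}\right)$, so that $M_0\cdots M_m=\left(\begin{smallmatrix} p_m & p_{m-1}\\ q_m & q_{m-1}\end{smallmatrix}\right)$, the crucial remark is that the matrix $C:=\left(\begin{smallmatrix} p_{\tau-1} & Nq_{\tau-1}\\ q_{\tau-1} & p_{\tau-1}\end{smallmatrix}\right)$ is exactly the matrix of multiplication by $\mathfrak{c}_{\tau-1}=p_{\tau-1}+q_{\tau-1}\sqrt{N}$ on $\mathbb{Z}[\sqrt{N}]$ in the basis $(1,\sqrt{N})$, since the column $\left(\begin{smallmatrix} x\\ y\end{smallmatrix}\right)\leftrightarrow x+y\sqrt{N}$ is sent by $C$ to the coordinates of $(x+y\sqrt{N})\mathfrak{c}_{\tau-1}$. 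Consequently it suffices to establish the single matrix identity $M_0\cdots M_{m+\tau}=C\,(M_0\cdots M_m)$ for all $m\ge0$: reading off its first column gives $\binom{p_{m+\tau}}{q_{m+\tau}}=C\binom{p_m}{q_m}$, i.e. $\mathfrak{c}_{m+\tau}=\mathfrak{c}_m\mathfrak{c}_{\tau-1}$, and the remaining case $m=-1$ is immediate because $\mathfrak{c}_{-1}=p_{-1}+q_{-1}\sqrt{N}=1$.

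To prove that matrix identity I would first handle the generating step $M_0\cdots M_{\tau}=C\,M_0$. Using \eqref{laprima} to substitute $p_{\tau-2}=-a_0p_{\tau-1}+Nq_{\tau-1}$ and $q_{\tau-2}=p_{\tau-1}-a_0q_{\tau-1}$ one checks directly that $M_0\cdots M_{\tau-1}=C\left(\begin{smallmatrix} 1 & -a_0\\ 0 & 1\end{smallmatrix}\right)$, and since $a_\tau=2a_0$ by the shape \eqref{cf-sqrt} one has $\left(\begin{smallmatrix} 1 & -a_0\\ 0 & 1\end{smallmatrix}\right)M_\tau=\left(\begin{smallmatrix} a_0 & 1\\ 1 & 0\end{smallmatrix}\right)=M_0$, whence $M_0\cdots M_\tau=C\,M_0$. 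The identity for general $m\ge0$ then follows by appending the periodic blocks: since $a_{\tau+j}=a_j$ for $j\ge1$ (again from \eqref{cf-sqrt}), we have $M_{\tau+1}\cdots M_{m+\tau}=M_1\cdots M_m$, so $M_0\cdots M_{m+\tau}=(M_0\cdots M_\tau)(M_{\tau+1}\cdots M_{m+\tau})=C\,M_0M_1\cdots M_m$, as required.

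The main obstacle, and the only place where the particular form \eqref{cf-sqrt} of the expansion of $\sqrt{N}$ is really used, is the generating step $M_0\cdots M_\tau=C\,M_0$ (equivalently $\mathfrak{c}_\tau=\mathfrak{c}_0\mathfrak{c}_{\tau-1}$): it hinges on both $a_\tau=2a_0$ and the symmetry relation \eqref{laprima}, together with the recognition that $C$ implements multiplication by $\mathfrak{c}_{\tau-1}$. Once this is in place the argument is purely formal, the periodicity $a_{\tau+j}=a_j$ doing the rest; a bare second-order recurrence argument, comparing $\mathfrak{c}_{m+\tau}$ and $\mathfrak{c}_m\mathfrak{c}_{\tau-1}$ as two solutions of $w_m=a_mw_{m-1}+w_{m-2}$ with matching values at $m=-1,0$, would reproduce the same proof but must track the boundary at $m=-1$ separately.
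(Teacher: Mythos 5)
Your proof is correct, and at the heart of the matter --- the case $k=1$ --- it takes a genuinely different route from the paper. The paper proves $\mathfrak{c}_{m+\tau}=\mathfrak{c}_m\mathfrak{c}_{\tau-1}$ by induction on $m$, using the second-order recurrence $\mathfrak{c}_{n}=a_n\mathfrak{c}_{n-1}+\mathfrak{c}_{n-2}$ together with the periodicity of the partial quotients; you instead prove the matrix identity $M_0\cdots M_{m+\tau}=C\,(M_0\cdots M_m)$, where $C$ is the matrix of multiplication by $\mathfrak{c}_{\tau-1}$ on $\mathbb{Z}[\sqrt{N}]$ in the basis $(1,\sqrt{N})$, by first establishing the generating step $M_0\cdots M_\tau=C\,M_0$ and then appending the periodic blocks $M_{\tau+1}\cdots M_{m+\tau}=M_1\cdots M_m$. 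Your computations check out: by \eqref{laprima} one has $C\bigl(\begin{smallmatrix}1&-a_0\\0&1\end{smallmatrix}\bigr)=\bigl(\begin{smallmatrix}p_{\tau-1}&p_{\tau-2}\\ q_{\tau-1}&q_{\tau-2}\end{smallmatrix}\bigr)$, and $a_\tau=2a_0$ gives $\bigl(\begin{smallmatrix}1&-a_0\\0&1\end{smallmatrix}\bigr)M_\tau=M_0$, so the generating step holds; reading off the first column and treating $m=-1$ separately (where $\mathfrak{c}_{-1}=1$) is legitimate. What your version buys is that it localizes exactly where the special shape \eqref{cf-sqrt} enters: the facts $a_\tau=2a_0$ and \eqref{laprima} are consumed once, in the generating step, and everything else is formal. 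This is in fact slightly more careful than the paper: since the recurrence is second order and $a_{\tau+m+1}=a_{m+1}$ only for $m\ge 0$, the paper's induction really needs the two base cases $m=-1$ and $m=0$, and the nontrivial one, $\mathfrak{c}_\tau=\mathfrak{c}_0\mathfrak{c}_{\tau-1}$ --- precisely where \eqref{laprima} and $a_\tau=2a_0$ must be invoked --- is passed over in silence there (only the trivial case $m=-1$ is recorded), a point your closing remark about ``matching values at $m=-1,0$'' correctly identifies. Your reduction of general $k\in\mathbb{N}$ to the one-period step by induction on $k$ coincides with the paper's argument.
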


\begin{proof}
The claimed equality is trivial for $k = 0$.
First, we prove by induction on $m$ the equality for $k=1$, and then we generalize for $k>1$. 

The case $m=-1$ is trivial, since $\mathfrak{c}_{-1} = 1$.
Now, we proceed by induction. Using the inductive 
hypothesis, we consider the following chain of equalities
\[
\begin{aligned}
\mathfrak{c}_{\tau +m+1} &= a_{m+1}\mathfrak{c}_{\tau +m} + \mathfrak{c}_{\tau +m-1} = a_{m+1} \mathfrak{c}_{m} \mathfrak{c}_{\tau -1} + \mathfrak{c}_{m-1} \mathfrak{c}_{\tau -1} \\
&= (a_{m+1} \mathfrak{c}_{m} + \mathfrak{c}_{m-1} )\mathfrak{c}_{\tau -1} = \mathfrak{c}_{m+1}\mathfrak{c}_{\tau -1} \end{aligned} \]
which concludes the proof in the case $k=1$.

For the case $k>1$ we iterate as follows:
\[\mathfrak{c}_{m + k\tau} = \mathfrak{c}_{m + (k-1)\tau}\mathfrak{c}_{\tau - 1} = \cdots = \mathfrak{c}_{m}\mathfrak{c}_{\tau - 1}^k. \]
\end{proof}

We recall that the minimal positive solution of the Pell Equation $X^2 - N Y^2 = 1$ is $(p_{\tau-1}, q_{\tau -1})$ if $\tau$ is even, and $(p_{2\tau-1}, q_{2\tau -1})$ if $\tau$ is odd. 
We denote by $R^{+}(N)$ the logarithm of $a+b \sqrt{N}$, where $(a,b)$ is the minimal positive solution of the Pell equation, 
by $R(N)$ the regulator of $\mathbb{Q}(\sqrt{N})$, and by $\mathcal{N}$ the field norm of $\mathbb{Q}(\sqrt{N})$. Moreover, given $x + y \sqrt{N} \in \mathbb{Q}(\sqrt{N})$, we denote by $\overline{x + y \sqrt{N}}$ its conjugate.

\begin{proposition}\label{proptildeqtildep}
We have
\begin{equation}\label{sviluppop}
(-1)^m P_{m+1} = p_mp_{m-1} - Nq_mq_{m-1} \quad \forall \, m \ge 0
\end{equation}
and
\begin{equation}\label{sviluppoq}
(-1)^{m+1} Q_{m+1} =p_m^2 -Nq_m^2= \mathcal{N}(\mathfrak{c}_m) \quad \forall \, m \ge -1. 
\end{equation}
\end{proposition}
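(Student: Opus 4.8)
The plan is to extract both identities simultaneously from the fundamental relation linking $\sqrt{N}$ to its complete quotients. First observe that the rightmost equality in \eqref{sviluppoq}, namely $p_m^2 - Nq_m^2 = \mathcal{N}(\mathfrak{c}_m)$, is immediate from the definition of the field norm: since $\mathfrak{c}_m = p_m + q_m\sqrt{N}$ has conjugate $p_m - q_m\sqrt{N}$, we get $\mathcal{N}(\mathfrak{c}_m) = (p_m + q_m\sqrt{N})(p_m - q_m\sqrt{N}) = p_m^2 - Nq_m^2$. Hence the real content is the pair of formulas for $(-1)^m P_{m+1}$ and $(-1)^{m+1}Q_{m+1}$.

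Two ingredients drawn from the matrix identity \eqref{cfmatrici} do all the work. Taking determinants of both sides of \eqref{cfmatrici} (each of the $m+1$ factors has determinant $-1$) yields the classical relation $p_m q_{m-1} - p_{m-1}q_m = (-1)^{m+1}$, and hence $p_{m-1}q_m - p_m q_{m-1} = (-1)^m$. Reading \eqref{cfmatrici} as a Möbius action and truncating the expansion at the complete quotient $\alpha_{m+1}$ gives the standard relation
\[ \sqrt{N} = \alpha_0 = \frac{p_m \alpha_{m+1} + p_{m-1}}{q_m \alpha_{m+1} + q_{m-1}}, \]
which is valid precisely because $P_0 = 0$ and $Q_0 = 1$ force $\alpha_0 = \sqrt{N}$.

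Next I would solve this relation for $\alpha_{m+1}$, obtaining $\alpha_{m+1} = (p_{m-1} - q_{m-1}\sqrt{N})/(q_m\sqrt{N} - p_m)$, and rationalize by multiplying numerator and denominator by $p_m + q_m\sqrt{N}$. The denominator collapses to $Nq_m^2 - p_m^2 = -\mathcal{N}(\mathfrak{c}_m)$, while the numerator becomes $(p_m p_{m-1} - Nq_m q_{m-1}) + (p_{m-1}q_m - p_m q_{m-1})\sqrt{N}$. Using the determinant identity $p_{m-1}q_m - p_m q_{m-1} = (-1)^m$ and then multiplying numerator and denominator by $(-1)^m$, one arrives at
\[ \alpha_{m+1} = \frac{(-1)^m(p_m p_{m-1} - Nq_m q_{m-1}) + \sqrt{N}}{(-1)^{m+1}(p_m^2 - Nq_m^2)}. \]
Comparing this with $\alpha_{m+1} = (P_{m+1} + \sqrt{N})/Q_{m+1}$ and invoking uniqueness of such a representation — since $N$ is not a square, $\sqrt{N}$ is irrational, so from $(A+\sqrt{N})/B = (P+\sqrt{N})/Q$ with integer entries and $B,Q \neq 0$ one isolates $\sqrt{N}$ to force $B = Q$ and then $A = P$ — yields $P_{m+1} = (-1)^m(p_m p_{m-1} - Nq_m q_{m-1})$ and $Q_{m+1} = (-1)^{m+1}(p_m^2 - Nq_m^2)$. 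Multiplying through by the appropriate sign gives exactly \eqref{sviluppop} and \eqref{sviluppoq} for $m \ge 0$; the extra case $m = -1$ of \eqref{sviluppoq} reduces to $Q_0 = p_{-1}^2 - Nq_{-1}^2 = 1$, checked directly.

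The only delicate points are the sign bookkeeping — keeping the parity of $(-1)^m$ straight across the rationalization — and confirming that the uniqueness argument applies, i.e. that $Q_{m+1} \neq 0$, which holds throughout the expansion of $\sqrt{N}$. A fully self-contained alternative is a double induction on $m$ directly from the recurrences \eqref{quadrirr} together with $p_m = a_m p_{m-1} + p_{m-2}$ and $q_m = a_m q_{m-1} + q_{m-2}$: the base cases $m = -1, 0$ are immediate, but the inductive step is computationally heavier and less transparent than the argument above, so I would favor the fundamental-relation route.
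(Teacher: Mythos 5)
Your proof is correct, but it takes a different route from the paper, whose entire proof of this proposition is the single line ``The proof is straightforward by induction'' (i.e., a direct induction on $m$ using the recurrences \eqref{quadrirr} together with $p_m = a_m p_{m-1} + p_{m-2}$, $q_m = a_m q_{m-1} + q_{m-2}$ --- exactly the alternative you sketch and set aside in your final paragraph). Your argument instead derives both \eqref{sviluppop} and \eqref{sviluppoq} simultaneously from the truncation identity $\sqrt{N} = (p_m \alpha_{m+1} + p_{m-1})/(q_m \alpha_{m+1} + q_{m-1})$, the determinant relation $p_m q_{m-1} - p_{m-1} q_m = (-1)^{m+1}$ obtained from \eqref{cfmatrici}, and the uniqueness of the representation $\alpha_{m+1} = (P_{m+1} + \sqrt{N})/Q_{m+1}$ via irrationality of $\sqrt{N}$; all three ingredients check out, the rationalization and sign bookkeeping are right, the base case $m = -1$ of \eqref{sviluppoq} is handled, and you correctly flag the two points that need justification ($Q_{m+1} \neq 0$, and the uniqueness step, both of which hold here --- note $Q_m > 0$ is even recorded in the paper's Proposition \ref{boundsPQ}). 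What each approach buys: the paper's induction is mechanical and fully self-contained but opaque about where the alternating signs come from, whereas your derivation makes the sign structure transparent (it is exactly the determinant $(-1)^{m+1}$ of the convergent matrix) and yields both identities in one pass from a single closed-form expression for $\alpha_{m+1}$, at the modest cost of invoking the Möbius-action interpretation of \eqref{cfmatrici} and the irrationality-based uniqueness argument.
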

\begin{proof}
The proof is straightforward by induction.
\end{proof}

Since we will exploit the sequences $\{P_n\}_{n \ge 0}$ and $\{Q_n\}_{n \ge 0}$ to factor the integer $N$, it is computationally important to bound their elements. 

\begin{proposition}[\cite{elia2019}]\label{boundsPQ}
We have
\[ 0< Q_{m} < \frac{2}{a_{m}} \sqrt{N} , \quad 0 \le P_{m} < \sqrt{N} \quad \forall \, m \ge 0.\]
\end{proposition}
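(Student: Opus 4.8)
The plan is to show that, for every $m \ge 1$, the complete quotient $\alpha_m = (P_m + \sqrt{N})/Q_m$ is a \emph{reduced} quadratic irrational, meaning $\alpha_m > 1$ and $-1 < \overline{\alpha_m} < 0$; all three inequalities of the statement then fall out by elementary manipulation. The case $m = 0$ is settled by inspection, since $P_0 = 0$, $Q_0 = 1$, and $a_0 = \lfloor \sqrt{N} \rfloor \ge 1$ (as $N$ is a non-square integer $\ge 2$), so that $0 \le P_0 < \sqrt{N}$ and $0 < Q_0 = 1 < 2\sqrt{N}/a_0$.

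First I would establish the base case $m = 1$ by direct computation: from \eqref{quadrirr} one gets $P_1 = a_0$ and $Q_1 = N - a_0^2 > 0$ (the inequality being strict because $N$ is not a square), whence $\overline{\alpha_1} = (a_0 - \sqrt{N})/(N - a_0^2) = -1/(\sqrt{N} + a_0) \in (-1, 0)$, and, using $0 < \sqrt{N} - a_0 < 1$, also $\alpha_1 > 1$. For the inductive step I would exploit that the Lagrange algorithm \eqref{quadrirr} is precisely the Gauss map $\alpha_{m+1} = 1/(\alpha_m - a_m)$ with $a_m = \lfloor \alpha_m \rfloor$. If $\alpha_m$ is reduced, then $a_m \ge 1$ and $\alpha_m - a_m \in (0,1)$, so $\alpha_{m+1} > 1$; applying the conjugation automorphism of $\mathbb{Q}(\sqrt{N})$, which fixes $\mathbb{Q}$ and therefore commutes with the Gauss map, gives $\overline{\alpha_{m+1}} = 1/(\overline{\alpha_m} - a_m)$, and since $\overline{\alpha_m} \in (-1,0)$ together with $a_m \ge 1$ forces $\overline{\alpha_m} - a_m < -1$, we conclude $\overline{\alpha_{m+1}} \in (-1,0)$. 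This closes the induction and shows $\alpha_m$ is reduced for all $m \ge 1$.

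Next I would read the bounds on $P_m$ and $Q_m$ (for $m \ge 1$) directly off reducedness. Writing $\alpha_m - \overline{\alpha_m} = 2\sqrt{N}/Q_m$ and noting the left-hand side is positive (since $\alpha_m > 0 > \overline{\alpha_m}$) yields $Q_m > 0$; then $\overline{\alpha_m} = (P_m - \sqrt{N})/Q_m < 0$ with $Q_m > 0$ gives $P_m < \sqrt{N}$, while $\alpha_m + \overline{\alpha_m} = 2P_m/Q_m > 0$ (as $\alpha_m > 1$ and $\overline{\alpha_m} > -1$) gives $P_m > 0$. Combined with the case $m = 0$, this establishes $0 \le P_m < \sqrt{N}$ and $Q_m > 0$ for all $m \ge 0$.

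Finally, the sharp upper bound on $Q_m$ follows immediately: since $a_m = \lfloor \alpha_m \rfloor \le \alpha_m = (P_m + \sqrt{N})/Q_m$ and $Q_m > 0$, we have $a_m Q_m \le P_m + \sqrt{N} < 2\sqrt{N}$, and dividing by $a_m \ge 1$ gives $Q_m < \tfrac{2}{a_m}\sqrt{N}$. I expect the only delicate point to be the inductive proof of reducedness, and specifically the observation that conjugation commutes with the Gauss map, which is exactly what propagates the two-sided bound $-1 < \overline{\alpha_m} < 0$ forward; everything after that is routine sign-chasing.
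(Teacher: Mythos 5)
Your proof is correct, but note that the paper itself contains no proof of this proposition: it is stated as imported from \cite{elia2019}, so there is no internal argument to compare against. What you give is the classical route, and every step checks out: the identification of \eqref{quadrirr} with the Gauss map $\alpha_{m+1} = 1/(\alpha_m - a_m)$ is valid, the induction correctly propagates reducedness ($\alpha_m > 1$, $-1 < \overline{\alpha_m} < 0$) using the fact that conjugation is a field automorphism fixing $a_m \in \mathbb{Q}$, and the sign-chasing via $\alpha_m - \overline{\alpha_m} = 2\sqrt{N}/Q_m$ and $\alpha_m + \overline{\alpha_m} = 2P_m/Q_m$ delivers exactly $Q_m > 0$ and $0 \le P_m < \sqrt{N}$ (with the $m=0$ case handled separately, as you do). One remark on economy: the final bound admits a shortcut that bypasses reducedness of $\alpha_m$ for that step, since the recurrence $P_{m+1} = a_m Q_m - P_m$ gives $a_m Q_m = P_m + P_{m+1} < 2\sqrt{N}$ directly once $0 \le P_m, P_{m+1} < \sqrt{N}$ and $Q_m > 0$ are known; indeed the whole proposition can be obtained by a direct induction on the pair $(P_m, Q_m)$ using $Q_m Q_{m+1} = N - P_{m+1}^2$, which is essentially the argument in \cite{elia2019} and avoids introducing the conjugate quotients altogether. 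Your version buys a stronger intermediate statement (all complete quotients are reduced, hence also $P_m > 0$ for $m \ge 1$ and pure periodicity of the tail), at the cost of slightly more machinery; either is acceptable, and there is no gap in what you wrote.
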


The following lemma proves two equalities that are useful in Theorem \ref{teodelta}.
\begin{lemma}\label{lemmaeq8}
It holds that
\begin{equation}\label{eq8}\begin{cases}p_{\tau -m-2} = (-1)^{m-1}p_{\tau -1}p_m + (-1)^mNq_{\tau -1}q_m\\
    q_{\tau -m-2} = (-1)^{m}p_{\tau -1}q_m + (-1)^{m-1}q_{\tau -1}p_m
\end{cases}
\forall \, -1 \le m \le \tau -1.\end{equation}
\end{lemma}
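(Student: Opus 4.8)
The plan is to prove the two equalities of \eqref{eq8} simultaneously by a two-step induction on $m$, running from $m=-1$ up to $m=\tau-1$. The only ingredients are the convergent recurrences $p_{m+1}=a_{m+1}p_m+p_{m-1}$ and $q_{m+1}=a_{m+1}q_m+q_{m-1}$, together with the palindrome symmetry of the expansion \eqref{cf-sqrt}, namely $a_i=a_{\tau-i}$ for $1\le i\le\tau-1$.

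First I would settle the two base cases that the second-order recurrences force me to check. For $m=-1$ the index on the left is $\tau-1$, and substituting $p_{-1}=1$, $q_{-1}=0$ collapses both right-hand sides to $p_{\tau-1}$ and $q_{\tau-1}$, so the identities hold trivially. For $m=0$ the left-hand indices are $\tau-2$, and substituting $p_0=a_0$, $q_0=1$ turns the two claimed equalities into $p_{\tau-2}=-a_0p_{\tau-1}+Nq_{\tau-1}$ and $q_{\tau-2}=p_{\tau-1}-a_0q_{\tau-1}$, which is precisely \eqref{laprima}.

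For the inductive step I fix $0\le m\le\tau-2$, assume \eqref{eq8} for $m-1$ and $m$, and derive it for $m+1$. Applying the convergent recurrence at index $\tau-m-1$ gives $p_{\tau-m-3}=p_{\tau-m-1}-a_{\tau-m-1}p_{\tau-m-2}$, and the palindrome lets me rewrite $a_{\tau-m-1}=a_{m+1}$, which is legitimate exactly in this range of $m$. Now $p_{\tau-m-1}=p_{\tau-(m-1)-2}$ and $p_{\tau-m-2}$ are the quantities governed by the hypotheses for $m-1$ and $m$; substituting them and collecting the coefficients of $p_{\tau-1}$ and of $Nq_{\tau-1}$, the two bracketed combinations simplify to $a_{m+1}p_m+p_{m-1}=p_{m+1}$ and $a_{m+1}q_m+q_{m-1}=q_{m+1}$. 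The exponents of $-1$ then match the target after using $(-1)^{m-1}=(-1)^{m+1}$, which yields the first equality of \eqref{eq8} for $m+1$; the second equality is obtained by the identical argument applied to the $q$-recurrence.

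The main obstacle is purely the index and sign bookkeeping: one must track that the palindrome substitution $a_{\tau-m-1}=a_{m+1}$ stays inside the admissible range and that the two base cases line up with $\mathfrak{c}_{-1}$ and with \eqref{laprima}, after which the algebra is forced. An essentially equivalent alternative would recast \eqref{eq8} as a single matrix identity expressing $\left(p_{\tau-m-2},\,q_{\tau-m-2}\right)$ as a fixed $2\times2$ matrix built from $p_{\tau-1},q_{\tau-1}$ applied to $\left(p_m,\,q_m\right)$, and verify it through \eqref{cfmatrici} and the symmetry of the palindromic product of the matrices occurring there; I would nonetheless prefer the direct induction as the cleaner route.
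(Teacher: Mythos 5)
Your proof is correct, and it takes a genuinely different route from the paper's. You run a second-order induction on $m$: the bases $m=-1$ (trivial from $p_{-1}=1$, $q_{-1}=0$) and $m=0$ (exactly \eqref{laprima}) are the right two, and your step — rewriting the recurrence backwards as $p_{\tau-m-3}=p_{\tau-m-1}-a_{\tau-m-1}p_{\tau-m-2}$, substituting $a_{\tau-m-1}=a_{m+1}$ (legitimate precisely for $0\le m\le\tau-2$), and invoking the hypotheses at $m-1$ and $m$ — does recombine into $p_{m+1}=a_{m+1}p_m+p_{m-1}$ and $q_{m+1}=a_{m+1}q_m+q_{m-1}$ with the stated signs, and the final step $m=\tau-2$ correctly reaches $m+1=\tau-1$ via $p_1=a_1p_0+p_{-1}$, so the full range $-1\le m\le\tau-1$ is covered with no separate check at the top. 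The paper instead argues globally in one shot: it uses \eqref{cfmatrici} together with the palindrome to write $\bigl(\begin{smallmatrix} p_{\tau-1} & p_{\tau-2}\\ q_{\tau-1} & q_{\tau-2}\end{smallmatrix}\bigr)$ as the matrix with entries $p_{\tau-m-2},q_{\tau-m-2},\dots$ times a reversed partial product, recognizes that reversed product as a transpose of a partial product from \eqref{cfmatrici}, and then inverts two $2\times 2$ matrices and applies \eqref{laprima}, checking only $m=-1$ and $m=\tau-1$ by hand — this is essentially the matrix alternative you sketched and set aside. The trade-off: your induction is more elementary (no inversions, no transpose trick) at the cost of per-step sign bookkeeping and of treating one column at a time, whereas the paper's computation delivers all four entries of the matrix at once (Equation \eqref{conti}, which also yields $p_{\tau-m-3}$ and $q_{\tau-m-3}$) and, more importantly, exhibits the transformation explicitly as the matrix $M_{\tau-1}$, which is exactly the object whose involutivity and eigenvectors drive the factor-finding argument in Theorem \ref{teoremaimportante}; your route would need that matrix reformulation extracted afterwards anyway.
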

\begin{proof}
Using Equation \eqref{cfmatrici} and the fact that $(a_1, \ldots, a_{\tau -1})$ is palindrome, we obtain, for all $0 \le m \le \tau - 2$,
\[\begin{aligned}
\begin{pmatrix} p_{\tau -1} & p_{\tau -2} \cr q_{\tau -1} & q_{\tau -2} \end{pmatrix} &= \begin{pmatrix} p_{\tau-m-2} & p_{\tau-m-3} \cr q_{\tau-m-2} & q_{\tau-m-3} \end{pmatrix} \begin{pmatrix} a_{\tau -m-1} & 1 \cr 1 & 0 \end{pmatrix} \cdots \begin{pmatrix} a_{\tau -1} & 1 \cr 1 & 0 \end{pmatrix} \\
&= \begin{pmatrix} p_{\tau-m-2} & p_{\tau-m-3} \cr q_{\tau-m-2} & q_{\tau-m-3} \end{pmatrix}\begin{pmatrix} a_{m+1} & 1 \cr 1 & 0 \end{pmatrix} \cdots \begin{pmatrix} a_{1} & 1 \cr 1 & 0 \end{pmatrix} \\
&= \begin{pmatrix} p_{\tau-m-2} & p_{\tau-m-3} \cr q_{\tau-m-2} & q_{\tau-m-3} \end{pmatrix}\left [ \begin{pmatrix} a_{0} & 1 \cr 1 & 0 \end{pmatrix}^{-1} \begin{pmatrix} p_{m+1}  & p_{m} \cr q_{m+1} &  q_{m} \end{pmatrix} \right ]^{T} \\
&= \begin{pmatrix} p_{\tau-m-2} & p_{\tau-m-3} \cr q_{\tau-m-2} & q_{\tau-m-3} \end{pmatrix} \begin{pmatrix} p_{m+1}  & q_{m+1} \cr p_{m} &  q_{m} \end{pmatrix} \begin{pmatrix} 0  & 1 \cr 1 &  -a_0 \end{pmatrix}. \end{aligned}
\]
Multiplying by the inverse of the matrices $\begin{pmatrix} p_{m+1}  & q_{m+1} \cr p_{m} &  q_{m} \end{pmatrix} $ and $ \begin{pmatrix} 0  & 1 \cr 1 &  -a_0 \end{pmatrix}$, and using Equation \eqref{laprima}, we obtain
\begin{equation}\label{conti}\begin{aligned}\begin{pmatrix} p_{\tau-m-2} & p_{\tau-m-3} \cr q_{\tau-m-2} & q_{\tau-m-3} \end{pmatrix} &= (-1)^m\begin{pmatrix} p_{\tau -1} & p_{\tau -2} \cr q_{\tau -1} & q_{\tau -2} \end{pmatrix}\begin{pmatrix} a_0 & 1 \cr 1 & 0 \end{pmatrix}  \begin{pmatrix} q_m  & -q_{m+1} \cr -p_m &  p_{m+1} \end{pmatrix}\\
&=(-1)^m \begin{pmatrix} Nq_{\tau-1}q_m - p_mp_{\tau-1}  & -Nq_{\tau-1}q_{m+1} \cr p_{\tau-1}q_{m} - p_{m}q_{\tau-1} &  -p_{\tau-1}q_{m+1}+q_{\tau-1}p_{m+1} \end{pmatrix}.\end{aligned}\end{equation}
The cases $m=-1$ and $m=\tau -1$ are straightforward to verify.
\end{proof}
The transformation defined by \eqref{eq8} is identified by the matrix

\[M_{\tau -1} =
\begin{bmatrix} 
-p_{\tau -1} & Nq_{\tau -1}\\
-q_{\tau -1} & p_{\tau -1} \end{bmatrix}
.\]

The results that follow in this section are those found by Elia in \cite{elia2019}, but they are further extended, approaching also the case of odd periods, and we include more detailed proofs.
The sequences $\{Q_n\}_{n \ge 0}$ and $\{P_n\}_{n \ge 1}$ are periodic of period $\tau$, where $\tau$ is the period of the sequence of partial quotients $\{ a_n\}_{n \ge 0}$ of the continued fraction expansion of $\sqrt{N}$. Further, within a period, there exist interesting symmetries.

\begin{theorem}\label{teodelta}
The sequence $\{Q_n\}_{n \ge 0}$ is periodic with period $\tau$. The elements of the first block $ \{ Q_n \}_{n=0}^{\tau}$ satisfy the
symmetry relation \[ Q_m = Q_{\tau -m}, \quad \forall \, \ 0 \le m \le \tau.\]
\end{theorem}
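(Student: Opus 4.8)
The plan is to treat the two assertions separately: the periodicity of $\{Q_n\}$ is essentially a restatement of the periodicity of the expansion \eqref{cf-sqrt}, while the internal symmetry $Q_m = Q_{\tau-m}$ is the substantive part and will be derived from Proposition \ref{proptildeqtildep} together with Lemma \ref{lemmaeq8}. For periodicity, I would note that \eqref{cf-sqrt} gives $a_{m+\tau} = a_m$ for every $m \ge 1$, so the complete quotients satisfy $\alpha_{m+\tau} = \alpha_m$ for $m \ge 1$; since $\alpha_m = (P_m + \sqrt N)/Q_m$ is the canonical representation produced by \eqref{quadrirr}, comparing the two sides forces $P_{m+\tau} = P_m$ and $Q_{m+\tau} = Q_m$ for $m \ge 1$. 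The only index needing separate attention is $m=0$, where $Q_0 = 1$: this matches $Q_\tau$ once we know $Q_\tau = 1$, which is the standard fact that the $Q$-sequence returns to $1$ at the end of a period, equivalently $p_{\tau-1}^2 - Nq_{\tau-1}^2 = (-1)^\tau$ (i.e.\ $(p_{\tau-1},q_{\tau-1})$ solves $X^2 - NY^2 = (-1)^\tau$).

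For the symmetry, the starting point is the identity obtained from \eqref{sviluppoq} by shifting the index,
\[ Q_m = (-1)^m\bigl(p_{m-1}^2 - Nq_{m-1}^2\bigr) = (-1)^m\,\mathcal{N}(\mathfrak{c}_{m-1}), \qquad m \ge 0, \]
so that both $Q_m$ and $Q_{\tau-m}$ are expressed through norms of the $\mathfrak{c}$-sequence. I would then apply Lemma \ref{lemmaeq8} with its index set to $m-1$ to rewrite $p_{\tau-m-1}$ and $q_{\tau-m-1}$ in terms of $p_{m-1}, q_{m-1}$ and $p_{\tau-1}, q_{\tau-1}$; the admissible range $-1 \le m-1 \le \tau-1$ of the lemma is exactly $0 \le m \le \tau$, matching the claimed range. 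The cleanest way to exploit these formulas is to observe that the two scalar identities of \eqref{eq8} package into the single relation $\mathfrak{c}_{\tau-m-1} = (-1)^m\,\mathfrak{c}_{\tau-1}\,\overline{\mathfrak{c}_{m-1}}$; taking norms and using multiplicativity of $\mathcal{N}$ together with $\mathcal{N}(\overline z) = \mathcal{N}(z)$ yields
\[ \mathcal{N}(\mathfrak{c}_{\tau-m-1}) = \mathcal{N}(\mathfrak{c}_{\tau-1})\,\mathcal{N}(\mathfrak{c}_{m-1}). \]

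Finally I would collect the signs. Substituting the last display into $Q_{\tau-m} = (-1)^{\tau-m}\mathcal{N}(\mathfrak{c}_{\tau-m-1})$ and using $\mathcal{N}(\mathfrak{c}_{\tau-1}) = p_{\tau-1}^2 - Nq_{\tau-1}^2 = (-1)^\tau$ gives
\[ Q_{\tau-m} = (-1)^{\tau-m}(-1)^{\tau}\,\mathcal{N}(\mathfrak{c}_{m-1}) = (-1)^{2\tau-m}\,\mathcal{N}(\mathfrak{c}_{m-1}) = (-1)^{m}\,\mathcal{N}(\mathfrak{c}_{m-1}) = Q_m, \]
which is the desired equality; the boundary values $m=0$ and $m=\tau$ reduce to $\mathcal{N}(\mathfrak{c}_{-1}) = 1$ and are consistent. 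The main obstacle I anticipate is not conceptual but bookkeeping: verifying that the two scalar identities of Lemma \ref{lemmaeq8} really assemble into the clean product $\mathfrak{c}_{\tau-1}\overline{\mathfrak{c}_{m-1}}$ (the $\sqrt N$-coefficients must match with the correct sign), and ensuring that the single exceptional input $\mathcal{N}(\mathfrak{c}_{\tau-1}) = (-1)^\tau$ — the Pell-type fact tying the parity of $\tau$ to the sign of the norm — is used with the right parity, since this is precisely what converts the a priori factor $(-1)^{\tau-m}$ into $(-1)^m$.
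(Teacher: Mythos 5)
Your proposal is correct, and its substantive half coincides with the paper's argument: for the symmetry, the paper also applies Lemma \ref{lemmaeq8} (index-shifted) and computes $p_{\tau-m-2}^2 - Nq_{\tau-m-2}^2 = (p_m^2-Nq_m^2)(p_{\tau-1}^2-Nq_{\tau-1}^2) = (-1)^\tau(p_m^2-Nq_m^2)$, which is exactly your identity $\mathcal{N}(\mathfrak{c}_{\tau-m-1}) = \mathcal{N}(\mathfrak{c}_{\tau-1})\mathcal{N}(\mathfrak{c}_{m-1})$ written out coefficientwise; your packaging of the two scalar relations into $\mathfrak{c}_{\tau-m-1} = (-1)^m\,\mathfrak{c}_{\tau-1}\overline{\mathfrak{c}_{m-1}}$ checks out (I verified both the rational and $\sqrt{N}$ components), and the sign bookkeeping $(-1)^{\tau-m}(-1)^\tau = (-1)^m$ is right. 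Where you genuinely diverge is the periodicity half: the paper deduces $Q_{m+\tau} = \left|\mathcal{N}(\mathfrak{c}_{m+\tau-1})\right| = \left|\mathcal{N}(\mathfrak{c}_{m-1})\mathcal{N}(\mathfrak{c}_{\tau-1})\right| = Q_m$ from Equation \eqref{ctau} together with \eqref{sviluppoq} and positivity of the $Q_n$ (Proposition \ref{boundsPQ}), which handles $m=0$ uniformly since $\mathfrak{c}_{-1}=1$; you instead use periodicity of the complete quotients $\alpha_{m+\tau} = \alpha_m$ for $m \ge 1$ (valid, since $\sqrt{N}$ irrational forces uniqueness of the representation $(P_m+\sqrt{N})/Q_m$), at the cost of a separate boundary check $Q_\tau = 1$. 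Your route is more elementary (no $\mathfrak{c}$-machinery for this half), while the paper's absorbs the boundary case for free; note also that your invocation of the Pell fact for $Q_\tau = 1$ could be avoided, since the recurrence \eqref{quadrirr} gives $Q_\tau = (N - P_{\tau+1}^2)/Q_{\tau+1} = (N-P_1^2)/Q_1 = Q_0$ directly from the $m\ge1$ periodicity — though you legitimately need $\mathcal{N}(\mathfrak{c}_{\tau-1}) = (-1)^\tau$ in the symmetry step anyway, exactly as the paper does.
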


\begin{proof}
Using Equation \eqref{ctau}, Equation \eqref{sviluppoq}, and the fact that $\mathcal{N}(p_{\tau -1} + q_{\tau -1}\sqrt{N}) = (-1)^{\tau}$, the following chain of equalities holds for all $m \ge 0$
\[ \begin{aligned}
Q_{m+\tau} &= \left |\mathcal{N}(\mathfrak{c}_{m-1+ \tau}) \right | = \left | \mathcal{N}(\mathfrak{c}_{m-1} \mathfrak{c}_{\tau -1}) \right | = \left | \mathcal{N}(\mathfrak{c}_{m-1}) \mathcal{N}( \mathfrak{c}_{\tau -1}) \right | = Q_m, \end{aligned}\]
from which we deduce that the period of $\{Q_n\}_{n \ge 0}$ is $\tau$.

The symmetry of the sequence $\{Q_n\}_{n \ge 0}$ within the $\tau$ elements of the first period follows from Equation \eqref{eq8}.
We have
\[ \begin{aligned}
    p_{\tau -m-2}^2 -Nq_{\tau -m-2}^2 &= (p_{\tau -1}p_m -Nq_{\tau -1}q_m)^2 -N(-p_{\tau -1}q_m + q_{\tau -1}p_m)^2 \\
    &=  (p_m^2 - Nq_m^2)(p_{\tau -1}^2 - Nq_{\tau -1}^2) \\
    &=  (-1)^{\tau}(p_m^2 - Nq_m^2),
\end{aligned}\]
implying that $(-1)^{\tau -m-1}Q_{\tau -m-1} = (-1)^{\tau}(-1)^{m+1} Q_{m+1}$ for all $-1 \le m \le \tau -1$. 
\end{proof}

\begin{theorem}\label{teoomega}
The sequence $\{P_n\}_{n \ge 1}$ is periodic with period $\tau$. The elements of the first block $\{ P_m \}_{m=1}^{\tau}$ satisfy the symmetry relation
\begin{equation}\label{simmetriap}
P_{\tau -m+1} = P_{m}, \quad \forall \, \ 1 \le m \le \tau.\end{equation}
\end{theorem}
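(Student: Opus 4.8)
The plan is to follow the two-part architecture of the proof of Theorem~\ref{teodelta}: first derive periodicity from the multiplicative relation \eqref{ctau}, then derive the internal symmetry from Lemma~\ref{lemmaeq8}. The unifying device is to read \eqref{sviluppop} as the statement that $(-1)^m P_{m+1}$ is the \emph{rational part} (the coefficient of $1$) of the product $\mathfrak{c}_m \overline{\mathfrak{c}_{m-1}}$; concretely,
\[ (-1)^m P_{m+1} = p_m p_{m-1} - N q_m q_{m-1} = \tfrac12\left(\mathfrak{c}_m\overline{\mathfrak{c}_{m-1}} + \overline{\mathfrak{c}_m}\,\mathfrak{c}_{m-1}\right), \qquad m \ge 0, \]
so that every assertion about $\{P_n\}$ translates into an identity among the $\mathfrak{c}_n$ and their conjugates, where the multiplicative structure makes the computation transparent.

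For periodicity I will apply \eqref{ctau} with $k=1$, namely $\mathfrak{c}_{m+\tau} = \mathfrak{c}_m\mathfrak{c}_{\tau-1}$, to the shifted product. Since conjugation is multiplicative and $\mathfrak{c}_{\tau-1}\overline{\mathfrak{c}_{\tau-1}} = \mathcal{N}(\mathfrak{c}_{\tau-1}) = (-1)^\tau$, one obtains
\[ \mathfrak{c}_{m+\tau}\,\overline{\mathfrak{c}_{m-1+\tau}} = \mathfrak{c}_m \mathfrak{c}_{\tau-1}\,\overline{\mathfrak{c}_{m-1}}\,\overline{\mathfrak{c}_{\tau-1}} = (-1)^\tau\, \mathfrak{c}_m\overline{\mathfrak{c}_{m-1}}. \]
Taking rational parts and using the displayed reformulation yields $(-1)^{m+\tau}P_{m+1+\tau} = (-1)^\tau(-1)^m P_{m+1}$, hence $P_{m+1+\tau}=P_{m+1}$ for all $m\ge 0$, which is exactly periodicity with period $\tau$.

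For the symmetry \eqref{simmetriap} I will first repackage the two scalar identities of Lemma~\ref{lemmaeq8} into the single field identity
\[ \mathfrak{c}_{\tau-m-2} = (-1)^{m-1}\,\mathfrak{c}_{\tau-1}\,\overline{\mathfrak{c}_m}, \qquad -1 \le m \le \tau-1, \]
obtained by assembling $\mathfrak{c}_{\tau-m-2}=p_{\tau-m-2}+q_{\tau-m-2}\sqrt{N}$ from \eqref{eq8} and recognizing the result as $(-1)^{m-1}\mathfrak{c}_{\tau-1}\overline{\mathfrak{c}_m}$. Substituting this identity (with $m$ replaced by $m-2$ and by $m-1$) into $\mathfrak{c}_{\tau-m}\overline{\mathfrak{c}_{\tau-m-1}}$, the two factors $\mathfrak{c}_{\tau-1}$ and $\overline{\mathfrak{c}_{\tau-1}}$ combine into $\mathcal{N}(\mathfrak{c}_{\tau-1})=(-1)^\tau$ and the signs collapse to
\[ \mathfrak{c}_{\tau-m}\,\overline{\mathfrak{c}_{\tau-m-1}} = -(-1)^\tau\,\mathfrak{c}_{m-1}\,\overline{\mathfrak{c}_{m-2}}. \]
Comparing rational parts via \eqref{sviluppop} (at index $\tau-m$ on the left and $m-1$ on the right) then gives $(-1)^{\tau-m}P_{\tau-m+1} = (-1)^\tau(-1)^m P_m$, which simplifies to $P_{\tau-m+1}=P_m$.

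The routine part is the algebra just sketched; the main obstacle is the bookkeeping of signs and index ranges rather than any conceptual difficulty. I expect the delicate points to be: (i) checking that the shifts $m\mapsto m-2$ and $m\mapsto m-1$ keep both arguments within the interval $-1\le \cdot \le \tau-1$ permitted by Lemma~\ref{lemmaeq8}, which forces exactly the stated range $1\le m\le\tau$; and (ii) the boundary cases $m=1$ and $m=\tau$, where one invokes $\mathfrak{c}_{-1}=1$ and must confirm the formulas persist, precisely as the cases $m=-1$ and $m=\tau-1$ are verified separately in Lemma~\ref{lemmaeq8}. Once the parity of $\tau$ is tracked consistently through $\mathcal{N}(\mathfrak{c}_{\tau-1})=(-1)^\tau$, both the even- and odd-period cases are handled uniformly.
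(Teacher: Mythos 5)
Your proposal is correct and follows essentially the same route as the paper: periodicity via the identity $(-1)^m P_{m+1} = \tfrac12\left(\mathfrak{c}_m\overline{\mathfrak{c}_{m-1}} + \overline{\mathfrak{c}_m}\,\mathfrak{c}_{m-1}\right)$ combined with \eqref{ctau} and $\mathcal{N}(\mathfrak{c}_{\tau-1})=(-1)^\tau$, and symmetry via Lemma~\ref{lemmaeq8} together with \eqref{sviluppop}. Your only departure is cosmetic: you package the two scalar relations of \eqref{eq8} into the single field identity $\mathfrak{c}_{\tau-m-2}=(-1)^{m-1}\mathfrak{c}_{\tau-1}\overline{\mathfrak{c}_m}$ and multiply in $\mathbb{Q}(\sqrt{N})$, whereas the paper expands the same products coordinate-wise, so the underlying computation (and the resulting index range $1\le m\le\tau$) is identical.
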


\begin{proof}
The periodicity of the sequence $\{P_n\}_{n \ge 1}$ follows from the properties expressed by Equation \eqref{ctau} and Equation \eqref{sviluppop}, noting that
\[ \begin{aligned}(-1)^m P_{m+1} &= \frac{1}{2}(\mathfrak{c}_{m}\overline{\mathfrak{c}_{m-1}} + \overline{\mathfrak{c}_{m}}\mathfrak{c}_{m-1}) \\
&=\frac{1}{2} \left (\frac{\mathfrak{c}_{m+\tau}}{\mathfrak{c}_{\tau - 1}} \frac{\overline{\mathfrak{c}_{m-1+\tau}}}{\overline{\mathfrak{c}_{\tau - 1}}} + \frac{\overline{\mathfrak{c}_{m + \tau}}}{\overline{\mathfrak{c}_{\tau - 1}}} \frac{\mathfrak{c}_{m-1+\tau}}{\mathfrak{c}_{\tau - 1}} \right ) \\
&= (-1)^{\tau} (-1)^{m+\tau} P_{m+1+\tau}.\end{aligned}\]
The next chain of equalities proves the symmetry property
\[\begin{aligned}(-1)^{\tau -m-1}P_{\tau -m} &= p_{\tau -m-1}p_{(\tau -1)-m-1} - Nq_{\tau-1-m}q_{(\tau -1) -m-1} \\
  &= -(p_{\tau -1}p_m - Nq_{\tau -1}q_m)(p_{\tau -1}p_{m-1} - Nq_{\tau -1}q_{m-1}) \\
  &+ N(p_{\tau -1}q_m - q_{\tau -1}p_m)(p_{\tau -1}q_{m-1} - p_{m -1}q_{\tau-1}) \\
    &= -(p_{\tau -1}^2 -Nq_{\tau -1}^2)(p_mp_{m-1} -Nq_mq_{m-1}) \\
    &= -(-1)^{\tau}(p_mp_{m-1} -Nq_mq_{m-1}) \\
    &= (-1)^{\tau +1} (-1)^{m} P_{m+1},
\end{aligned}\]
where, in the second-to-last equality, we used Equation \eqref{sviluppop}.
\end{proof}

Note that $M_{\tau -1}^2 = (-1)^{\tau}I_2$, with $I_2$ the identity matrix, and, if $\tau$ is even, the eigenvalues of $M_{\tau -1}$ are $\lambda_0 =1$ and $\lambda_1 =-1$, with eigenvectors
\begin{equation} \label{eingen}
U^{(h)} = \left [ \frac{p_{\tau -1} + \lambda_h}{d} , \frac{q_{\tau -1}}{d}\right ]^{T},
\end{equation}
where $d = \gcd(p_{\tau -1} + \lambda_h, q_{\tau -1}) $ for $h \in \left \{ 0,1 \right \}$.

\begin{theorem}\label{teoremaimportante}
If the period $\tau$ of the continued fraction expansion of $\sqrt{N}$ is even, a factor of $2N$ is located at positions $\frac{\tau}{2} + j\tau$ with $j = 0, 1, \ldots$, in the sequence $\{Q_n\}_{n \ge 0}$.
\end{theorem}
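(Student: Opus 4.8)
The plan is to reduce the whole statement to a single divisibility fact at the midpoint index $m=\tau/2$ (which is an integer precisely because $\tau$ is even), and then to spread it over all the indicated positions by periodicity. Concretely, by Theorem~\ref{teodelta} the sequence $\{Q_n\}_{n\ge 0}$ has period $\tau$, so $Q_{\tau/2+j\tau}=Q_{\tau/2}$ for every $j\ge 0$; hence it suffices to prove that $Q_{\tau/2}\mid 2N$.

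First I would extract a relation tying $P_{\tau/2}$ to $Q_{\tau/2}$. Applying the symmetry relation \eqref{simmetriap} of Theorem~\ref{teoomega} with $m=\tau/2$ gives $P_{\tau/2+1}=P_{\tau/2}$. Substituting this into the defining recursion $P_{m+1}=a_mQ_m-P_m$ from \eqref{quadrirr} at $m=\tau/2$ yields $P_{\tau/2}=a_{\tau/2}Q_{\tau/2}-P_{\tau/2}$, that is,
\[ 2P_{\tau/2}=a_{\tau/2}\,Q_{\tau/2}. \]
This is the key identity: it says that $Q_{\tau/2}$ divides $P_{\tau/2}$ up to a controlled factor of $2$.

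Next I would bring in $N$. The recursion $Q_{m+1}=(N-P_{m+1}^2)/Q_m$ at $m=\tau/2-1$ gives $Q_{\tau/2-1}Q_{\tau/2}=N-P_{\tau/2}^2$, so in particular $Q_{\tau/2}\mid N-P_{\tau/2}^2$. I would then split on the parity of $a_{\tau/2}$. If $a_{\tau/2}$ is even, the key identity gives $Q_{\tau/2}\mid P_{\tau/2}$, hence $Q_{\tau/2}\mid P_{\tau/2}^2$, and combining with $Q_{\tau/2}\mid N-P_{\tau/2}^2$ yields $Q_{\tau/2}\mid N$. If $a_{\tau/2}$ is odd, the left-hand side of the key identity is even while $a_{\tau/2}$ is odd, which forces $Q_{\tau/2}$ to be even; writing $Q_{\tau/2}=2Q'$, the identity simplifies to $P_{\tau/2}=a_{\tau/2}Q'$, so $Q'\mid P_{\tau/2}$, and since $Q'\mid Q_{\tau/2}\mid N-P_{\tau/2}^2$ we again obtain $Q'\mid N$, whence $Q_{\tau/2}=2Q'\mid 2N$. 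In either case $Q_{\tau/2}\mid 2N$, which is the assertion.

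The argument is short once the two symmetry theorems are available; the only genuine subtlety—and the step I expect to require the most care—is the parity bookkeeping in the last paragraph, which is exactly the reason the conclusion is a factor of $2N$ rather than of $N$. I would also verify the edge index constraints, namely that $m=\tau/2$ lies in the admissible range $1\le m\le \tau$ for \eqref{simmetriap} and satisfies $m\ge 0$ for the recursion; both hold whenever $\tau\ge 2$, which is the only relevant case since $\tau$ is assumed even.
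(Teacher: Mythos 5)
Your proof is correct, and it takes a genuinely different route from the paper's. You derive the key identity $2P_{\tau/2}=a_{\tau/2}Q_{\tau/2}$ by combining the midpoint instance $P_{\tau/2+1}=P_{\tau/2}$ of the symmetry \eqref{simmetriap} (Theorem~\ref{teoomega}) with the recursion \eqref{quadrirr}, then use $Q_{\tau/2}Q_{\tau/2-1}=N-P_{\tau/2}^2$ and a parity case-split on $a_{\tau/2}$ to conclude $Q_{\tau/2}\mid 2N$; periodicity (Theorem~\ref{teodelta}) handles all positions $\tau/2+j\tau$. The paper instead works with the matrix $M_{\tau-1}$: since $\tau$ is even it is involutory with eigenvalues $\pm 1$, the vector $[p_j,q_j]^T$ is shown via \eqref{eq8} to be an eigenvector exactly when $j=\tau_0=(\tau-2)/2$, and comparison with the explicit eigenvectors \eqref{eingen} yields $p_{\tau_0}=(p_{\tau-1}+(-1)^{\tau_0})/d$, $q_{\tau_0}=q_{\tau-1}/d$, from which the divisibility is extracted by direct computation. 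Your argument is shorter and more elementary: beyond the already-established symmetry theorems it needs only the Lagrange recursions, and the parity bookkeeping makes transparent precisely when the extra factor of $2$ arises (namely when $a_{\tau/2}$ is odd, forcing $Q_{\tau/2}$ even). What the paper's heavier eigenvector computation buys in exchange is explicit arithmetic information tying the midpoint data to the fundamental Pell solution --- the closed forms for $p_{\tau_0},q_{\tau_0}$ in terms of $p_{\tau-1},q_{\tau-1}$ and the relation $Q_{\tau/2}=2p_{\tau_0}/d$ --- which your divisibility-only argument does not produce. Your index checks are all in order: $\tau\ge 2$ since $\tau$ is even, so $m=\tau/2$ lies in the range $1\le m\le\tau$ of \eqref{simmetriap} and $m=\tau/2-1\ge 0$ is admissible for the $Q$-recursion, and positivity of $Q_{\tau/2}$ (Proposition~\ref{boundsPQ}) removes any sign ambiguity.
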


\begin{proof}
It is sufficient to consider $j = 0$, due to the periodicity of $\{Q_n\}_{n \ge 0}$. Since $\tau$ is even, $M_{\tau -1}$ is involutory and has eigenvalues $\lambda_0 =1$ and $\lambda_1 =-1$ with corresponding eigenvectors shown in \eqref{eingen}.
Considering Equation \eqref{eq8} written as
\[ \begin{bmatrix}
    p_{\tau - j-2} \\
    q_{\tau - j-2}
\end{bmatrix} = (-1)^{j-1}M_{\tau -1} \begin{bmatrix}
    p_{j} \\
    q_{j}
\end{bmatrix},\]
we see that $V^{(j)} = \left [ p_j, q_j \right ]^T$ is an eigenvector of $M_{\tau -1}$, of eigenvalue $(-1)^{j-1}$, if and only if $j$ satisfies the condition $\tau -j-2 = j$, that is $j=\frac{\tau -2}{2}= \tau_0$. From the comparison of $V^{(j)}$ and $U^{(h)}$, we have
\[ p_{\tau_0} = \frac{p_{\tau -1} + (-1)^{\tau_0}}{d} \quad  q_{\tau_0} = \frac{q_{\tau -1}}{d},\]
where the equalities are fully motivated because $\text{gcd}	\left ( p_{\tau_0}, q_{\tau_0}\right )=1$, recalling that $d = \gcd(p_{\tau -1} + (-1)^{\tau_0},q_{\tau -1})$. Direct computation yield
\[ (-1)^{\tau_0+1} Q_{\tau_0+1} = \frac{(p_{\tau -1} + (-1)^{\tau_0 -1})^2 - Nq_{\tau -1}^2}{d^2} = 2\frac{(-1)^{\tau_0}p_{\tau -1} +1}{d^2}, \]
which can be written as $p_{\tau_0}^2 - Nq_{\tau_0}^2 = 2(-1)^{\tau_0} \frac{p_{\tau_0}}{d}$. Dividing this equality by $2\frac{p_{\tau_0}}{d}$ we have
\[ \frac{d p_{\tau_0}}{2}- N\frac{1}{\frac{2p_{\tau_0}}{d}}q_{\tau_0}^2 = (-1)^{\tau_0}.\]
Noting that $\text{gcd}\left ( p_{\tau_0}, q_{\tau_0} \right )=1$, it follows that $ \frac{2p_{\tau_0}}{d}$ is a divisor of $2N$, i.e. $Q_{\tau_0+1} = Q_{\tau/2} \mid 2N$.
\end{proof}

In the case where $\tau$ is odd, we can state the following two results.

\begin{theorem}\label{oddperiodsumsquares}
Let $N$ be a positive integer such that the continued fraction expansion of $\sqrt{N}$ has an odd period $\tau$.
The representation of $N$ as a sum of two squares is given by $N = a^2 + b^2$, where $a = Q_{(\tau +1)/2}$ and $b = P_{(\tau +1)/2}$.
\end{theorem}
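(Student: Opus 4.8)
The plan is to read off the identity $N = P_{(\tau+1)/2}^2 + Q_{(\tau+1)/2}^2$ directly from the defining recurrence \eqref{quadrirr} together with the symmetry of the $Q$-sequence established in Theorem \ref{teodelta}. Write $k = (\tau+1)/2$, which is a positive integer precisely because $\tau$ is odd; this index is the only place where the oddness of the period is genuinely used.

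First I would rearrange the third line of \eqref{quadrirr}. Multiplying $Q_{m+1} = (N - P_{m+1}^2)/Q_m$ through by $Q_m$ and isolating $N$ gives the identity
\[ N = P_{m+1}^2 + Q_m Q_{m+1}, \qquad m \ge 0. \]
Evaluating this at $m+1 = k$, i.e.\ $m = k-1 = (\tau-1)/2$, yields
\[ N = P_k^2 + Q_{k-1} Q_k. \]

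The second step is to collapse the product $Q_{k-1}Q_k$ into a single square using the symmetry relation $Q_m = Q_{\tau-m}$ of Theorem \ref{teodelta}, valid for $0 \le m \le \tau$. Taking $m = k$ (which indeed lies in $[0,\tau]$) and noting that $\tau - k = (2k-1) - k = k-1$, we obtain $Q_k = Q_{k-1}$. Substituting this into the previous display gives
\[ N = P_k^2 + Q_k^2 = P_{(\tau+1)/2}^2 + Q_{(\tau+1)/2}^2, \]
which is the asserted representation, with $a = Q_{(\tau+1)/2}$ and $b = P_{(\tau+1)/2}$ both integers.

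There is no real obstacle here beyond bookkeeping: the entire argument hinges on checking that the index $k = (\tau+1)/2$ lands exactly at the fixed point of the reflection $m \mapsto \tau - m$ shifted by one, so that the two consecutive denominators $Q_{k-1}$ and $Q_k$ become equal. This is exactly where oddness of $\tau$ is indispensable—for even $\tau$ the center would fall at a half-integer and the two neighbouring terms would not coincide. I would verify that $k$ stays within the admissible range $0 \le k \le \tau$ for Theorem \ref{teodelta} (immediate since $\tau \ge 1$), and note in passing that the companion symmetry $P_{\tau-m+1} = P_m$ of Theorem \ref{teoomega} is consistent at $m=k$ (it reduces to $P_k = P_k$) but contributes nothing further; it is the $Q$-symmetry alone that drives the proof.
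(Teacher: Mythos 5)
Your proof is correct and is essentially the paper's own argument: both hinge on the symmetry $Q_{(\tau+1)/2}=Q_{(\tau-1)/2}$ from Theorem \ref{teodelta} combined with the identity $N=P_{m+1}^2+Q_mQ_{m+1}$, which the paper merely phrases as the statement that the central form $Q_{(\tau-1)/2}X^2+2P_{(\tau+1)/2}XY-Q_{(\tau+1)/2}Y^2$ has discriminant $4N$, while you read it off directly from the recurrence \eqref{quadrirr}. The difference is purely presentational, so nothing further is needed.
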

\begin{proof}
Since $\tau$ is odd, by the anti-symmetry in the sequence $  
 \{ Q_n\}_{n=0}^{\tau -1}$, we have $Q_{(\tau +1)/2} = Q_{(\tau -1)/2}$, so that the quadratic form $Q_{(\tau -1)/2} X^2 + 2P_{(\tau +1)/2}XY - Q_{(\tau +1)/2} Y^2$ has discriminant $4P_{(\tau +1)/2}^2 - 4Q_{(\tau +1)/2} Q_{(\tau -1)/2} = 4N$, which shows the assertion.
\end{proof}
From this, we can deduce a result similar to that in Theorem \ref{teoremaimportante} for the case of an odd period.

\begin{corollary}
Let $N>0$ be a composite nonsquare integer such that the continued fraction expansion of $\sqrt{N}$ has odd period $\tau$. If $-1$ is a quadratic nonresidue modulo $N$, then $Q_{(\tau +1)/2}$ contains a nontrivial factor of $N$.
\end{corollary}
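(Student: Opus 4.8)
The plan is to leverage Theorem~\ref{oddperiodsumsquares} directly: it already furnishes the representation $N = a^2 + b^2$ with $a = Q_{(\tau+1)/2}$ and $b = P_{(\tau+1)/2}$, so the corollary is really a statement about when such a representation forces a common factor between $a$ and $N$. I would first record that both summands are genuinely nonzero and bounded away from $N$. Since $N$ is not a perfect square, $b = P_{(\tau+1)/2} \neq 0$ (otherwise $N = a^2$), so $b \geq 1$, and hence $a^2 = N - b^2 < N$, giving $0 < a < \sqrt{N} < N$. In particular, the quantity $g := \gcd(Q_{(\tau+1)/2}, N)$ already satisfies $g \leq a < N$, so it can never equal $N$; the only thing left to rule out is $g = 1$.

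The heart of the argument is to show $g \neq 1$ using the quadratic nonresidue hypothesis. Suppose, for contradiction, that $\gcd(a, N) = 1$. Reducing $N = a^2 + b^2$ modulo $N$ yields $b^2 \equiv -a^2 \pmod{N}$, and since $a$ is now invertible modulo $N$ we may set $c \equiv b\,a^{-1} \pmod{N}$ to obtain $c^2 \equiv -1 \pmod{N}$. This exhibits $-1$ as a quadratic residue modulo $N$, contradicting the hypothesis. Hence $\gcd(Q_{(\tau+1)/2}, N) > 1$, and combined with the bound above, this gcd is a nontrivial divisor of $N$.

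I do not expect a genuine obstacle here, as the result is essentially a reformulation of the sum-of-two-squares identity of Theorem~\ref{oddperiodsumsquares} through the lens of modular arithmetic. The only point deserving care is the strictness of the bounds that guarantee the gcd is proper: one must invoke the nonsquareness of $N$ to ensure $b \neq 0$ (so that $a < \sqrt{N}$, not merely $a \leq \sqrt{N}$), and one should note that the assumption that $N$ is composite is what makes ``$-1$ a quadratic nonresidue modulo $N$'' a meaningful and nonvacuous hypothesis in this setting. Everything else follows from the single modular reduction above.
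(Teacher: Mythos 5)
Your proof is correct and follows essentially the same route as the paper: both arguments apply Theorem \ref{oddperiodsumsquares} and then show, by contraposition, that $\gcd(Q_{(\tau+1)/2},N)=1$ would make $\left(P_{(\tau+1)/2}\,Q_{(\tau+1)/2}^{-1}\right)^2 \equiv -1 \pmod{N}$, contradicting the nonresidue hypothesis. Your additional check that the gcd is strictly less than $N$ (via $P_{(\tau+1)/2}\neq 0$, hence $Q_{(\tau+1)/2}<\sqrt{N}$) is a small completeness point the paper leaves implicit, since Proposition \ref{boundsPQ} already bounds $Q_m < 2\sqrt{N} < N$.
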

\begin{proof}
Using the previous theorem, $N = Q_{(\tau +1)/2}^2 + P_{(\tau +1)/2}^2$, and so $P_{(\tau +1)/2}^2 \equiv - Q_{(\tau +1)/2}^2 \pmod{N}$. If $\gcd (N, Q_{(\tau +1)/2}) = 1$, then $Q_{(\tau +1)/2}^{-1} \pmod{N}$ exists. Therefore, $\left (Q^{-1}_{(\tau +1)/2} P_{(\tau +1)/2} \right )^2 \equiv -1 \pmod{N}$, and so $-1$ is a quadratic residue modulo $N$.
\end{proof}

\begin{lemma}\label{lemmauguaglianza}
The following identity holds
\[ \frac{\sqrt{N} + P_{m+1}}{Q_{m+1}} = - \frac{p_{m-1} - q_{m-1} \sqrt{N}}{p_{m} - q_{m} \sqrt{N}} \quad \forall \, m \ge 0.\]
\end{lemma}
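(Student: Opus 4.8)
The plan is to rewrite the right-hand side in terms of the sequence $\mathfrak{c}_m = p_m + q_m\sqrt{N}$ and its conjugate, and then to invoke the algebraic identities \eqref{sviluppop} and \eqref{sviluppoq} together with the determinant relation extracted from \eqref{cfmatrici}. First I would observe that the numerator and denominator appearing on the right are exactly the conjugates $\overline{\mathfrak{c}_{m-1}} = p_{m-1} - q_{m-1}\sqrt{N}$ and $\overline{\mathfrak{c}_m} = p_m - q_m\sqrt{N}$, so that the whole right-hand side is $-\,\overline{\mathfrak{c}_{m-1}}/\overline{\mathfrak{c}_m}$.

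Next I would rationalize by multiplying numerator and denominator by $\mathfrak{c}_m$. The denominator becomes $\overline{\mathfrak{c}_m}\,\mathfrak{c}_m = \mathcal{N}(\mathfrak{c}_m) = p_m^2 - Nq_m^2$, which by \eqref{sviluppoq} equals $(-1)^{m+1}Q_{m+1}$. The numerator becomes $-\,\overline{\mathfrak{c}_{m-1}}\,\mathfrak{c}_m$, and expanding this product gives $(p_{m-1}p_m - Nq_{m-1}q_m) + (p_{m-1}q_m - p_m q_{m-1})\sqrt{N}$. I would then identify the two coefficients: the rational part $p_{m-1}p_m - Nq_{m-1}q_m$ is precisely $(-1)^m P_{m+1}$ by \eqref{sviluppop}, while the coefficient of $\sqrt{N}$ is the determinant $p_{m-1}q_m - p_m q_{m-1}$, which I would evaluate by taking determinants in \eqref{cfmatrici}: the left-hand side is a product of $m+1$ matrices each of determinant $-1$, so $p_m q_{m-1} - p_{m-1}q_m = (-1)^{m+1}$ and hence this coefficient equals $(-1)^m$. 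Consequently $\overline{\mathfrak{c}_{m-1}}\,\mathfrak{c}_m = (-1)^m(P_{m+1} + \sqrt{N})$, and substituting everything back yields
\[
-\frac{\overline{\mathfrak{c}_{m-1}}\,\mathfrak{c}_m}{\mathcal{N}(\mathfrak{c}_m)} = -\frac{(-1)^m(P_{m+1}+\sqrt{N})}{(-1)^{m+1}Q_{m+1}} = \frac{\sqrt{N}+P_{m+1}}{Q_{m+1}},
\]
which is exactly the claimed left-hand side.

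I expect no serious obstacle here, since every step is a direct substitution of an already-established identity. The only point requiring mild care is the bookkeeping of signs: one must check that the determinant of the product in \eqref{cfmatrici} is $(-1)^{m+1}$ (there being $m+1$ factors, indexed from $0$ to $m$, each of determinant $-1$) and that this $(-1)^{m+1}$ then cancels correctly against the $(-1)^{m+1}$ coming from $\mathcal{N}(\mathfrak{c}_m) = (-1)^{m+1}Q_{m+1}$, leaving the correct overall $+$ sign. I would also note in passing that all the fractions are well defined, since $Q_{m+1}\neq 0$ and $\overline{\mathfrak{c}_m} = p_m - q_m\sqrt{N}\neq 0$ because $N$ is not a perfect square; this guarantees the rationalization step is legitimate.
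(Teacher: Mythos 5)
Your proposal is correct: all sign bookkeeping checks out (in particular $\overline{\mathfrak{c}_{m-1}}\,\mathfrak{c}_m = (-1)^m(P_{m+1}+\sqrt{N})$, using $p_mq_{m-1}-p_{m-1}q_m = (-1)^{m+1}$ from \eqref{cfmatrici} and the identities \eqref{sviluppop}--\eqref{sviluppoq}), and the $m=0$ case is covered since \eqref{sviluppoq} holds from $m=-1$. The paper simply declares the proof ``straightforward'' without giving details, and your rationalization argument is precisely the kind of direct verification intended, so nothing further is needed.
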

\begin{proof}
The proof is straightforward.
\end{proof}

The following result will be used in the proof of Theorem \ref{distanzaciclopari}.

\begin{lemma}\label{lemmagamma1.44}
If $\tau$ is even, defining $\gamma$ as
\[ \gamma = \prod_{m=0}^{\tau -1} (\sqrt{N} + P_{m+1}),\]
we have $\frac{\gamma}{\overline{\gamma}} =(p_{\tau -1} + q_{\tau -1} \sqrt{N})^2 = \mathfrak{c}_{\tau -1}^2$.
If $\tau$ is odd, defining $\omega$ as
\[ \omega = \prod_{m=0}^{2\tau -1} (\sqrt{N} + P_{m+1}),\]
we have $\frac{\omega}{\overline{\omega}} =(p_{2\tau -1} + q_{2\tau -1} \sqrt{N})^2 = \mathfrak{c}_{2\tau -1}^2$.
\end{lemma}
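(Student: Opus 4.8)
The plan is to rewrite each factor in the product by means of Lemma~\ref{lemmauguaglianza} and observe that the resulting product telescopes. Since $p_m-q_m\sqrt{N}=\overline{\mathfrak{c}_m}$, that lemma reads
\[ \sqrt{N}+P_{m+1}=-Q_{m+1}\,\frac{p_{m-1}-q_{m-1}\sqrt{N}}{p_m-q_m\sqrt{N}}=-Q_{m+1}\,\frac{\overline{\mathfrak{c}_{m-1}}}{\overline{\mathfrak{c}_m}}. \]
Substituting this into the definition of $\gamma$ and multiplying out, the quotients collapse: $\prod_{m=0}^{\tau-1}\overline{\mathfrak{c}_{m-1}}/\overline{\mathfrak{c}_m}=\overline{\mathfrak{c}_{-1}}/\overline{\mathfrak{c}_{\tau-1}}$, and since $\mathfrak{c}_{-1}=p_{-1}+q_{-1}\sqrt{N}=1$ this equals $1/\overline{\mathfrak{c}_{\tau-1}}$. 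The identical computation with upper limit $2\tau-1$ handles $\omega$, leaving $1/\overline{\mathfrak{c}_{2\tau-1}}$.

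The next step is the sign and normalization bookkeeping. The even-period product has $\tau$ factors and the odd-period product has $2\tau$, so the accumulated sign $(-1)^{\tau}$ (resp. $(-1)^{2\tau}$) is $+1$ in both cases. Writing $\Pi$ for the product of the $Q_m$ occurring (a positive integer by Proposition~\ref{boundsPQ}), I obtain $\gamma=\Pi/\overline{\mathfrak{c}_{\tau-1}}$ and $\omega=\Pi/\overline{\mathfrak{c}_{2\tau-1}}$. Taking conjugates fixes $\Pi$ and only interchanges $\mathfrak{c}$ with $\overline{\mathfrak{c}}$, so $\overline{\gamma}=\Pi/\mathfrak{c}_{\tau-1}$ and $\overline{\omega}=\Pi/\mathfrak{c}_{2\tau-1}$; forming the ratios, $\Pi$ cancels and leaves $\gamma/\overline{\gamma}=\mathfrak{c}_{\tau-1}/\overline{\mathfrak{c}_{\tau-1}}$ and $\omega/\overline{\omega}=\mathfrak{c}_{2\tau-1}/\overline{\mathfrak{c}_{2\tau-1}}$.

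Finally, I would turn each ratio into a square using the norm. By \eqref{sviluppoq}, $\mathfrak{c}_m\overline{\mathfrak{c}_m}=\mathcal{N}(\mathfrak{c}_m)=p_m^2-Nq_m^2$, and the parity hypothesis is exactly what makes the relevant index a Pell index: $p_{\tau-1}^2-Nq_{\tau-1}^2=(-1)^{\tau}=1$ when $\tau$ is even, while $p_{2\tau-1}^2-Nq_{2\tau-1}^2=(-1)^{2\tau}=1$ when $\tau$ is odd, precisely the statement that $(p_{\tau-1},q_{\tau-1})$, respectively $(p_{2\tau-1},q_{2\tau-1})$, is the minimal Pell solution recalled before Proposition~\ref{proptildeqtildep}. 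Hence $\overline{\mathfrak{c}_{\tau-1}}=\mathfrak{c}_{\tau-1}^{-1}$ and $\overline{\mathfrak{c}_{2\tau-1}}=\mathfrak{c}_{2\tau-1}^{-1}$, so the two ratios become $\mathfrak{c}_{\tau-1}^2$ and $\mathfrak{c}_{2\tau-1}^2$, as claimed. The only real subtlety, the ``main obstacle'', is recognizing that Lemma~\ref{lemmauguaglianza} produces a telescoping product and keeping the sign and norm bookkeeping straight; it is precisely the evenness of the total number of factors, together with the Pell normalization $\mathcal{N}=+1$ at the terminal index, that upgrades $\mathfrak{c}/\overline{\mathfrak{c}}$ into $\mathfrak{c}^2$.
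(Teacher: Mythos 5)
Your proof is correct and takes essentially the same route as the paper's: both rewrite each factor via Lemma~\ref{lemmauguaglianza}, telescope the product down to the terminal index, and use the Pell normalization $p_{\tau-1}^2-Nq_{\tau-1}^2=1$ (resp.\ $p_{2\tau-1}^2-Nq_{2\tau-1}^2=1$) to conclude. The only differences are organizational: you substitute into $\gamma$ directly and then conjugate, whereas the paper first shows $\gamma/\overline{\gamma}$ is a perfect square using $N-P_{m+1}^2=Q_mQ_{m+1}$ and the periodicity of $\{Q_n\}_{n\ge 0}$, and you make explicit the norm-one step that the paper leaves implicit in passing from $1/\left(p_{\tau-1}-q_{\tau-1}\sqrt{N}\right)$ to $p_{\tau-1}+q_{\tau-1}\sqrt{N}$.
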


\begin{proof}
We provide a proof for the case $\tau$ even; the odd case follows the same procedure. We have
\[\begin{aligned}
\frac{\gamma}{\overline{\gamma}} &= \prod_{m=0}^{\tau -1} \frac{\sqrt{N} + P_{m+1}}{-\sqrt{N} + P_{m+1}} \\
&= \prod_{m=0}^{\tau -1} \frac{(\sqrt{N} + P_{m+1})^2}{ P_{m+1}^2 -N} \\
&= \prod_{m=0}^{\tau -1} \frac{(\sqrt{N} + P_{m+1})^2}{-Q_{m+1} Q_{m}}.
\end{aligned}\]
Noting that $\prod_{m=0}^{\tau -1} - Q_m Q_{m+1} = \prod_{m=0}^{\tau -1} - Q_{m+1}^2 = \prod_{m=0}^{\tau -1} Q_{m+1}^2$ due to the periodicity of the sequence $\{Q_m\}_{m \ge 0}$ and the parity of $\tau$, we deduce that $\frac{\gamma}{ \overline{\gamma}}$ is a perfect square. From Lemma \ref{lemmauguaglianza}, it follows that the base of the square giving $\frac{\gamma}{\overline{\gamma}}$ is
\[ \begin{aligned}
\prod_{m=0}^{\tau -1} \frac{\sqrt{N} + P_{m+1}}{Q_{m+1}} &= \prod_{m=0}^{\tau -1} - \frac{p_{m-1} - q_{m-1} \sqrt{N}}{p_{m} - q_{m} \sqrt{N}} \\
&= \frac{p_{-1} - q_{-1}\sqrt{N}}{p_{\tau -1} - q_{\tau -1}\sqrt{N}} \\
&= p_{\tau -1} + q_{\tau -1}\sqrt{N}.
\end{aligned}\]
Therefore, 
\begin{equation} \label{quarta}
\prod_{m=0}^{\tau -1} \frac{\sqrt{N} + P_{m+1}}{Q_{m+1}} = p_{\tau -1} + q_{\tau -1}\sqrt{N} = \mathfrak{c}_{\tau -1},\end{equation}
and in conclusion $\frac{\gamma}{\overline{\gamma}} = \mathfrak{c}_{\tau -1}^2$. 

Similarly, if $\tau$ is odd, we have
\begin{equation} \label{quinta}
\prod_{m=0}^{2\tau -1} \frac{\sqrt{N} + P_{m+1}}{Q_{m+1}} = p_{2\tau -1} + q_{2\tau -1}\sqrt{N} = \mathfrak{c}_{2\tau -1}.\end{equation}
\end{proof}

\section{Even period and nontrivial factor} \label{sec:pre-even}
In this section, we establish conditions on the integer $N$ and its factors to ensure that the period $\tau$ is even and that $Q_{\tau/2} \ne 2$. First, we address the problem of guaranteeing an even period, and then, under this assumption, we derive conditions for the existence of a nontrivial factor of $N$ (i.e., $Q_{\tau/2} \ne 2$). Subsequently, we turn our attention to the case where $N$ is an RSA modulus.

According to a classical result on the Pell equation, the period $\tau$ of the continued fraction expansion of $\sqrt{N}$ is even if and only if the negative Pell equation
\begin{equation}\label{negpelleq}
X^2 - NY^2 = -1
\end{equation}
has no solution. Based on this, we derive the following sufficient condition on the factors of $N$ for $\tau$ to be even.

\begin{proposition}\label{parityperiodprop}
Let $N>0$ be a nonsquare integer. If $N$ is divided by a prime $p \equiv 3 \pmod{4}$, then the period $\tau$ of the continued fraction expansion of $\sqrt{N}$ is even.
\end{proposition}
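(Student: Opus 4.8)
The plan is to argue by contradiction, leaning on the classical criterion recalled immediately before the statement: the period $\tau$ is even if and only if the negative Pell equation \eqref{negpelleq} has no integer solution. Accordingly, I would assume that \eqref{negpelleq} does admit a solution $(x,y) \in \mathbb{Z}^2$ and extract a contradiction from the hypothesis that some prime $p \equiv 3 \pmod 4$ divides $N$.

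First I would rewrite the relation $x^2 - N y^2 = -1$ in the form $x^2 + 1 = N y^2$. Reducing modulo $p$ and using $p \mid N$, the right-hand side vanishes, so $x^2 \equiv -1 \pmod{p}$. Since $p$ is odd, $-1 \not\equiv 0 \pmod p$, and in particular $x^2 \equiv -1 \not\equiv 0$ forces $p \nmid x$; hence $x$ is a genuine nonzero square root of $-1$ modulo $p$, showing that $-1$ is a quadratic residue modulo $p$.

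Next I would invoke Euler's criterion (the first supplement to quadratic reciprocity): for an odd prime $p$, $-1$ is a quadratic residue modulo $p$ precisely when $(-1)^{(p-1)/2} = 1$, that is, when $p \equiv 1 \pmod 4$. As we have assumed $p \equiv 3 \pmod 4$, this is impossible, contradicting the conclusion of the previous step. Therefore \eqref{negpelleq} has no solution, and by the stated criterion $\tau$ is even.

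I do not expect any serious obstacle here: the whole content is the short quadratic-residue computation above, and the only point requiring a moment of care is confirming that the congruence $x^2 \equiv -1 \pmod p$ really certifies $-1$ as a (nonzero) residue, which follows because $p$ is odd and $p \nmid x$. All the heavy lifting — the equivalence between an even period and the insolubility of the negative Pell equation — is supplied by the classical result quoted just above, so the argument reduces to this elementary modular step.
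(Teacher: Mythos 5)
Your proposal is correct and follows essentially the same route as the paper: both argue by contradiction from the classical equivalence between an even period and the insolubility of $X^2 - NY^2 = -1$, reducing a hypothetical solution modulo $p$ to exhibit $-1$ as a quadratic residue, which contradicts $p \equiv 3 \pmod{4}$. Your version merely spells out Euler's criterion and the check that $p \nmid x$, details the paper leaves implicit.
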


\begin{proof}
Suppose that \eqref{negpelleq} has an integral solution $(u,v)$. Then, $u^2 \equiv -1 \pmod{N}$, and so $u^2 \equiv -1 \pmod{p}$. This means that $-1$ is a quadratic residue modulo $p$, but this cannot be possible since $p \equiv 3 \pmod{4}$.
\end{proof}

As we can see in the example below, this is not a necessary condition.

\begin{example}
Let $N = 5^2 \cdot 17 \cdot 37 = 15725$, which is not divisible by any prime $p \equiv 3 \pmod{4}$. The period of the continued fraction expansion of $\sqrt{15725}$ is $10$.
\end{example}

Determining the parity of the period when no primes congruent to $3 \pmod{4}$ divide $N$ is a challenging open problem.
Recently, Koymans and Pagano \cite{koymans2022stevenhagen} proved the following theorem, originally conjectured by Stevenhagen in \cite{stevenhagen1993number}. For further results in this area see \cite{Chan_Koymans_Milovic_Pagano}, \cite{fouvry2010negative} and \cite{Fouvry_Kluners_period}.

\begin{theorem}[\cite{koymans2022stevenhagen}]\label{stevenhagenproved}
Let $\mathcal{D} = \{ N \in \mathbb{N} \mid N \text{ squarefree and not divisible by primes }p\equiv 3 \pmod{4}\}$, $\mathcal{D}^{-} = \{ N \in \mathcal{D} \mid \text{\eqref{negpelleq} has an integral solution} \}$, $(\mathcal{D})_{\le X}= \{ N \in \mathcal{D} \mid N \le X \}$ and $(\mathcal{D}^{-})_{\le X}= \{ N \in \mathcal{D}^{-} \mid N \le X \}$.
We have
\[ \lim_{X\rightarrow \infty} \frac{\# (\mathcal{D}^{-})_{\le X}}{\# (\mathcal{D})_{\le X}} = 1- \alpha, \]
where 
\[\alpha = \prod_{j \text{ odd}} (1-2^{-j}) = 0.41942244117951...\]
This result does hold when restricted to odd/even numbers.
\end{theorem}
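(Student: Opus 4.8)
The plan is to abandon any attempt at a direct elementary argument: the final statement is the resolution of Stevenhagen's conjecture, and the only known route passes through the fine $2$-adic arithmetic of the class group of $\mathbb{Q}(\sqrt{N})$ combined with Smith's equidistribution machinery. The first step is a purely algebraic reduction. Solvability of \eqref{negpelleq} is equivalent to the fundamental unit of the relevant order having norm $-1$, which in turn is equivalent to the natural surjection from the narrow class group onto the ordinary class group being an isomorphism. Restricting to $N \in \mathcal{D}$ (squarefree, with no prime factor $\equiv 3 \pmod{4}$) removes the only genus-theoretic obstruction: by genus theory the $2$-rank of the narrow class group is then $t-1$, where $t$ is the number of prime divisors of $N$, so the norm-$(-1)$ question becomes a statement about the $2^{\infty}$-part of the class group alone.

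Second, I would encode the obstruction through the tower of R\'edei symbols. The $4$-rank is read off from the $\mathbb{F}_2$-corank of the R\'edei matrix with entries $\left(\frac{p_i}{p_j}\right)$, and solvability of the negative Pell equation is detected by a linear-algebra condition on this matrix together with its higher analogues (controlling the $8$-rank, $16$-rank, \dots) built from higher R\'edei symbols $[p_i,p_j,p_k,\dots]$. Stevenhagen's heuristic replaces these symbols, in the large-$t$ limit, by the entries of a random alternating matrix over $\mathbb{F}_2$ together with an independent random vector; the probability that the resulting chain of linear conditions is never satisfied is then exactly $\alpha = \prod_{j \text{ odd}}(1-2^{-j})$, giving the predicted density $1-\alpha$ of solvability.

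Third, and this is where essentially all the difficulty lies, I would make the heuristic rigorous by proving that the joint distribution of the higher R\'edei symbols over $N \in (\mathcal{D})_{\le X}$ converges, as $X \to \infty$, to the independent random-matrix law above. This is the content of Smith's method: one writes every mixed moment of the relevant $\mathbb{F}_2$-valued invariants as a character sum, uses R\'edei reciprocity to symmetrize, and controls the resulting multilinear sums over primes by large-sieve and bilinear estimates, showing that the off-diagonal correlations vanish while the diagonal terms reproduce the random-matrix moments. The principal obstacle is precisely this analytic input: one must establish enough cancellation and independence among R\'edei symbols of arbitrarily high order, since any finite truncation recovers only the Fouvry--Kl\"uners bounds rather than the exact constant $\alpha$. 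Once all moments agree with those of the model, the method of moments upgrades moment convergence to the density statement, and the invariance under restriction to odd or even $N$ follows because the $2^{\infty}$-computation is insensitive to the behaviour at the prime $2$ beyond the finitely many congruence conditions already imposed.
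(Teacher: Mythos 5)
You should first note the ground truth here: the paper does not prove this statement at all. Theorem \ref{stevenhagenproved} is imported verbatim, with attribution, from Koymans and Pagano's resolution of Stevenhagen's conjecture, and the paper's ``proof'' is the citation itself. So there is no internal argument to compare your attempt against; the only meaningful comparison is with the literature. On that score your outline is accurate: the reformulation of solvability of the negative Pell equation as the fundamental unit having norm $-1$, equivalently the narrow class group coinciding with the ordinary one; the genus-theoretic observation that for $N \in \mathcal{D}$ the $2$-rank of the narrow class group is $t-1$ so no genus obstruction survives; the R\'edei matrix governing the $4$-rank, with higher symbols controlling $8$-rank and beyond; Stevenhagen's random model producing exactly $\alpha = \prod_{j \text{ odd}}(1-2^{-j})$; and Smith's equidistribution machinery as the engine that makes the heuristic rigorous. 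This is genuinely the architecture of the actual proof, and you correctly locate where the difficulty sits.

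That said, as a \emph{proof} the proposal has a gap the size of the theorem: your third step --- the joint equidistribution of higher R\'edei symbols with uniformity in the number of prime factors --- is asserted, not established, and it is precisely the content for which Koymans and Pagano needed a long and delicate argument (including a new treatment of R\'edei reciprocity and substantial adaptation of Smith's method, which was designed for the $2^\infty$-part of class groups of imaginary quadratic fields, to the real/unit-norm setting). Your own remark that any finite truncation only recovers Fouvry--Kl\"uners-type bounds is exactly right, and it shows why nothing short of the full infinite-level equidistribution yields the constant $1-\alpha$; but that means no step of your sketch beyond the algebraic reduction in paragraph one is actually carried out. A secondary imprecision: the model at level $2^{k+1}$ is not simply ``an alternating matrix plus an independent vector'' --- the higher conditions are only defined on the locus where the lower-level data degenerates, and formalizing this conditional structure is part of the work. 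In short: your roadmap is faithful to the real proof, but it is a roadmap; given that the paper itself elects to cite rather than prove, treating this result as a black box, as the authors do, is the defensible choice at this level of exposition.
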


We now examine the case $N = pq$ and provide sufficient conditions on $p$ and $q$ for determining the parity of $\tau$.

\begin{proposition}[\cite{rippon2004even}]\label{propevenperiod}
If $N=rs$, then $\tau$ is even if and only if one of the following two conditions holds:
\begin{enumerate}
    \item $rX^2 - sY^2 = \pm 2$, with $X$ and $Y$ odd;
    \item $r,s \ne 1$ and $rX^2 - sY^2 = \pm 1$, with $X$ and $Y$ integers.
\end{enumerate}
\end{proposition}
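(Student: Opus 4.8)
The plan is to translate the problem into the language of binary quadratic forms of discriminant $4N$, where the two conditions will turn out to be the two possible shapes of a nontrivial ambiguous form lying in the principal cycle. Consider the forms $\Phi_k = (-1)^k[\,Q_k,\,2P_{k+1},\,-Q_{k+1}\,]$: each has discriminant $4N$, since $P_{k+1}^2+Q_kQ_{k+1}=N$ by the recurrence \eqref{quadrirr}, and together they constitute the principal cycle (the reduction cycle of $[1,0,-N]$); moreover, by Proposition \ref{proptildeqtildep} the principal form $[1,0,-N]$ represents $(-1)^{m+1}Q_{m+1}$ for every $m$. The symmetries $Q_m=Q_{\tau-m}$ and $P_{\tau-m+1}=P_m$ of Theorems \ref{teodelta} and \ref{teoomega} make the cycle-reversing involution $[a,b,c]\mapsto[a,-b,c]$ act as a reflection of this cyclic sequence; such a reflection fixes a single form when $\tau$ is odd and an even number of forms when $\tau$ is even. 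Since the principal form $[1,0,-N]$ is itself ambiguous and lies in the cycle, the fixed set is nonempty, so it has size exactly $1$ when $\tau$ is odd and exactly $2$ when $\tau$ is even. This gives the reformulation I will use in both directions: \emph{$\tau$ is even if and only if the principal cycle contains a nontrivial ambiguous form}.

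For the forward implication I would localise that second ambiguous form at the centre of the period. When $\tau$ is even, Theorem \ref{teoomega} gives $P_{\tau/2}=P_{\tau/2+1}$, hence $2P_{\tau/2}=a_{\tau/2}Q_{\tau/2}$ and $Q_{\tau/2}Q_{\tau/2+1}=N-P_{\tau/2}^2$, and I would split on the parity of $a_{\tau/2}$. If $a_{\tau/2}$ is even, then $Q_{\tau/2}\mid P_{\tau/2}$; completing the square turns $\Phi_{\tau/2}$ into $[\,Q_{\tau/2},0,-N/Q_{\tau/2}\,]$, so $r:=Q_{\tau/2}$ divides $N$, and, lying in the principal cycle, this form represents $1$, giving $rX^2-sY^2=1$ with $s=N/Q_{\tau/2}$; here $Q_{\tau/2}\neq1$ and Proposition \ref{boundsPQ} force $r,s\neq1$, which is condition (2). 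If $a_{\tau/2}$ is odd, then $Q_{\tau/2}=2w$ is even, $w\mid N$, and $\Phi_{\tau/2}$ is equivalent to a form of the second ambiguous type $[\,2w,2w,(w-s)/2\,]$ with $s=N/w$; using $2\bigl(2wX^2+2wXY+\tfrac{w-s}{2}Y^2\bigr)=w(2X+Y)^2-sY^2$, its representation of $1$ becomes $wU^2-sV^2=2$, and since a common factor $2$ in $U,V$ would force $4\mid 2$, the variables $U,V$ are both odd, which is condition (1).

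For the converse I would run the correspondence backwards. Given condition (2), say $rX^2-sY^2=\varepsilon$ with $\varepsilon=\pm1$ and $r,s\neq1$, after possibly interchanging $r$ and $s$ I may assume $\varepsilon=+1$; then $[r,0,-s]$ represents $1$ primitively, so it is equivalent to $[1,0,-N]$ and is a nontrivial ambiguous form in the principal cycle, whence $\tau$ is even. Given condition (1), $rX^2-sY^2=\pm2$ with $X,Y$ odd, the displayed identity read in reverse shows that $[\,2r,2r,(r-s)/2\,]$ represents $1$, and the oddness of $X,Y$ guarantees that this representation is primitive; so once again $[1,0,-N]$ acquires a second ambiguous companion in its cycle and $\tau$ is even.

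I expect the delicate point to be the bookkeeping around the second (non-diagonal) ambiguous type: verifying that the hypothesis ``$X,Y$ odd'' corresponds exactly to primitivity of the associated form (equivalently, that the relevant solution lives in $\mathbb{Z}[\sqrt{N}]$ rather than in a proper suborder), and, hand in hand with this, pinning down the factor-of-two and sign conventions in the cycle so that the reflection count of fixed forms is genuinely $1$ versus $2$ and not off by a spurious doubling. Disposing of the signs by the $r\leftrightarrow s$ swap, and matching the constraint $r,s\neq1$ in condition (2) to the exclusion of the principal form $[1,0,-N]$ itself, are the remaining routine checks.
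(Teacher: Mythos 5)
Note first that the paper does not prove this proposition at all: it is quoted from \cite{rippon2004even}, so your attempt can only be judged on its own merits. Your forward direction is essentially the classical argument and is sound: when $\tau$ is even, Theorem \ref{teoomega} gives $2P_{\tau/2}=a_{\tau/2}Q_{\tau/2}$, the split on the parity of $a_{\tau/2}$ produces the two ambiguous shapes $(Q_{\tau/2},0,-N/Q_{\tau/2})$ and $(2w,2w,(w-s)/2)$, the identity $2\bigl(2wX^2+2wXY+\tfrac{w-s}{2}Y^2\bigr)=w(2X+Y)^2-sY^2$ is correct, and $U=2X+Y\equiv V=Y\pmod 2$ does force $U,V$ odd. (Two small repairs there: $Q_{\tau/2}\neq 1$ does not follow from Proposition \ref{boundsPQ} but from the standard fact that $Q_m=1$ only when $\tau\mid m$; and the signs $(-1)^{\tau/2}$ should be carried through, which only changes $\pm1$, $\pm2$ as the statement allows.)

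The converse, however, has a genuine gap, in two places. First, your counting claim is false as stated: when $\tau$ is odd the cycle of reduced forms has length $2\tau$, not $\tau$ (see the paper's remark after the definition of $\mathbf{\Upsilon}$), and it contains \emph{two} ambiguous reduced forms, namely $\qfelia_0=(1,2a_0,a_0^2-N)$ and $\qfelia_{\tau}=(-1,2a_0,N-a_0^2)$, since $Q_{\tau}=1$ and $P_{\tau+1}=a_0$. So the dichotomy ``one fixed form iff $\tau$ odd'' is wrong; the correct statement must distinguish ambiguous reduced forms with $|a|\neq 1$. Second, and more seriously, you pass from ``$(r,0,-s)$ represents $1$ primitively, hence is properly equivalent to $(1,0,-N)$'' to ``it is a nontrivial ambiguous form \emph{in the principal cycle}.'' Equivalence places $(r,0,-s)$ in the principal \emph{class}, not in the cycle: for $1<r<N$ this form is generally not reduced, and applying $\rho$ to reduce it destroys the shape $a\mid b$, so nothing in your argument forces a nontrivial ambiguous \emph{reduced} form to appear among the $\qfelia_m$, and the parity of $\tau$ does not follow. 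To close this you need either Gauss's theory of ambiguous classes and symmetric cycles (an ambiguous class carries exactly two ambiguous reduced forms, located at the symmetry points of its cycle, and one must then rule out leading coefficients $\pm1$), or the standard unit-theoretic exclusivity argument: from $rX^2-sY^2=\pm1$ set $\theta=X\sqrt{r}+Y\sqrt{s}$, so that $\theta^2=(rX^2+sY^2)+2XY\sqrt{N}$ is a unit of norm $+1$ in $\mathbb{Z}[\sqrt{N}]$; if $\tau$ were odd, $\varepsilon=p_{\tau-1}+q_{\tau-1}\sqrt{N}$ has norm $-1$, so $\theta^2$ is an even power of $\varepsilon$ and $\theta\in\mathbb{Q}(\sqrt{N})$, which is impossible for $r,s\neq1$ unless $r$ or $s$ is a perfect square --- a degenerate case needing separate treatment --- and similarly for $\pm2$ via $\bigl(\tfrac{rX^2+sY^2}{2}\bigr)^2-N(XY)^2=1$ with $X,Y$ odd. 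As it stands, the harder half of the equivalence is not proved.
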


Using the above proposition, we can provide sufficient conditions on $p$ and $q$ for odd period.

\begin{proposition}\label{proppqodd}
Let $N = pq$, where $p$ and $q$ are primes congruent to $1 \pmod{4}$. If $\left( \frac{p}{q} \right)=-1$, then the period $\tau$ of the continued fraction expansion of $\sqrt{N}$ is odd.
\end{proposition}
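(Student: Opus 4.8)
The plan is to deduce the oddness of $\tau$ from Proposition \ref{propevenperiod} by showing that, under the hypotheses $p \equiv q \equiv 1 \pmod{4}$ and $\left( \frac{p}{q} \right) = -1$, \emph{no} factorization $N = rs$ satisfies either condition (1) or condition (2); then $\tau$ cannot be even, hence it is odd. Since $N = pq$ is a product of two primes, the factorizations to inspect are exactly $(r,s) \in \{(1,pq),\,(pq,1),\,(p,q),\,(q,p)\}$. I would check condition (1) for all four, and condition (2) only for $(p,q)$ and $(q,p)$, since that condition requires $r,s \neq 1$.

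First I would eliminate condition (1), i.e.\ $rX^2 - sY^2 = \pm 2$ with $X,Y$ odd, by a single reduction modulo $4$. As $p \equiv q \equiv 1 \pmod{4}$, every factor $r,s \in \{1,p,q,pq\}$ satisfies $r \equiv s \equiv 1 \pmod{4}$, while $X,Y$ odd forces $X^2 \equiv Y^2 \equiv 1 \pmod{4}$; hence $rX^2 - sY^2 \equiv 0 \pmod{4}$, which is incompatible with $\pm 2 \equiv 2 \pmod{4}$. This disposes of condition (1) for every factorization simultaneously.

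The essential step is condition (2), where the Legendre symbol hypothesis is used. For $pX^2 - qY^2 = \pm 1$ I would reduce modulo $q$ to get $X^2 \equiv \pm p^{-1} \pmod{q}$; since $q \equiv 1 \pmod{4}$ gives $\left( \frac{-1}{q} \right) = 1$, solvability in either sign would force $\left( \frac{p}{q} \right) = 1$, contradicting the hypothesis. Symmetrically, $qX^2 - pY^2 = \pm 1$ reduces modulo $p$, so solvability would require $\left( \frac{q}{p} \right) = 1$; by quadratic reciprocity, which for $p \equiv q \equiv 1 \pmod{4}$ gives $\left( \frac{q}{p} \right) = \left( \frac{p}{q} \right) = -1$, this is again impossible. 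With both conditions excluded for all factorizations, Proposition \ref{propevenperiod} forces $\tau$ to be odd.

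The calculations are all routine congruence manipulations, so the only point demanding care is structural: ensuring the list of factorizations is exhaustive and that condition (2) is invoked only when both factors exceed $1$. The crux where the hypothesis $\left( \frac{p}{q} \right) = -1$ is genuinely indispensable is condition (2); everything else follows from $\left( \frac{-1}{q} \right) = 1$ (and $\left( \frac{-1}{p} \right) = 1$) for primes $\equiv 1 \pmod{4}$ together with reciprocity.
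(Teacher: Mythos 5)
Your proof is correct, and for the decisive step it takes a more elementary route than the paper. The mod-$4$ elimination of condition (1) is exactly the paper's: with $r \equiv s \equiv 1 \pmod{4}$ and $X, Y$ odd, $rX^2 - sY^2 \equiv 0 \pmod{4}$ cannot equal $\pm 2$. Where you diverge is condition (2): the paper invokes the Hasse--Minkowski theorem, observing that $\left( \frac{p}{q} \right) = 1$ is a necessary condition for the solubility of $pX^2 - qY^2 = \pm 1$ over $\mathbb{Q}$, whereas you work directly with integral solutions via congruences, reducing modulo $q$ (using $\left( \frac{-1}{q} \right) = 1$) and, for the swapped equation $qX^2 - pY^2 = \pm 1$, modulo $p$ together with quadratic reciprocity. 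Your version is self-contained at the level of the Legendre symbol and avoids local--global machinery; what the paper's rational-solubility framing buys in exchange is that the two orderings $(r,s) = (p,q)$ and $(q,p)$ are handled in one stroke, since over $\mathbb{Q}$ the equations $pX^2 - qY^2 = \pm 1$ and $qX^2 - pY^2 = \pm 1$ transform into one another by swapping variables, so reciprocity never needs to be cited. (In fact you could dispense with reciprocity too: reducing $qX^2 - pY^2 = \pm 1$ modulo $q$ rather than $p$ gives $Y^2 \equiv \mp p^{-1} \pmod{q}$, which again forces $\left( \frac{p}{q} \right) = 1$ via $\left( \frac{-1}{q} \right) = 1$.) Finally, your explicit enumeration of the four factorizations $N = rs$, with condition (2) checked only when $r, s \neq 1$, is a careful reading of Proposition \ref{propevenperiod} that the paper leaves implicit.
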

\begin{proof}
The first equation of Proposition \ref{propevenperiod} cannot have integral solutions, since in this case $r \equiv s \equiv 1 \pmod{4}$, and so $rX^2 - sY^2 \equiv 0 \pmod{4}$. 
By Hasse--Minkowski theorem, $\left( \frac{p}{q} \right)=1$ is a necessary condition for the solubility of $pX^2 - qY^2 = \pm 1$ in $\mathbb{Q}$. Therefore, if $\left( \frac{p}{q} \right)=-1$ then, by Proposition \ref{propevenperiod}, the period $\tau$ is odd.
\end{proof}

The following proposition, proved by Dirichlet in \cite{dir1834}, gives us sufficient conditions on $p$ and $q$ for even period.

\begin{proposition}[\cite{dir1834}]\label{proppqeven}
Let $N = pq$, where $p$ and $q$ are primes congruent to $1 \pmod{4}$. If $\left( \frac{p}{q} \right) = 1$ and $\left( \frac{p}{q} \right)_4 \left( \frac{q}{p} \right)_4 = -1$, then the period of the continued fraction expansion of $\sqrt{N}$ is even.
\end{proposition}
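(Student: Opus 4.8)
The plan is to reduce the statement to a question about the negative Pell equation and then resolve that question by a factorization argument in the Gaussian integers $\mathbb{Z}[i]$, which is the natural home of the quartic residue symbols appearing in the hypothesis.

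First I would record two reductions. By the classical criterion recalled before Proposition~\ref{parityperiodprop}, $\tau$ is even if and only if $X^2 - NY^2 = -1$ has no integer solution, so it suffices to prove that this equation is unsolvable for $N = pq$. This is consistent with Proposition~\ref{propevenperiod}: since $p \equiv q \equiv 1 \pmod 4$, its condition~(1) cannot hold, because for odd $X,Y$ one has $pX^2 - qY^2 \equiv 1 - 1 \equiv 0 \pmod 4$ while $\pm 2 \equiv 2 \pmod 4$; hence $\tau$ is even if and only if $pX^2 - qY^2 = \pm 1$ is solvable, and the two formulations agree. Note also that the hypothesis $\left(\frac{p}{q}\right)=1$ is exactly what makes the quartic symbols $\left(\frac{p}{q}\right)_4$ and $\left(\frac{q}{p}\right)_4$ well defined, since by quadratic reciprocity $p$ is a square mod $q$ iff $q$ is a square mod $p$ as $p\equiv q\equiv 1 \pmod 4$.

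I would then argue the contrapositive: assuming $X^2 - pqY^2 = -1$ has a solution $(x,y)$, I would show $\left(\frac{p}{q}\right)_4\left(\frac{q}{p}\right)_4 = 1$. Writing the solution as $x^2 + 1 = pqy^2$ and factoring in $\mathbb{Z}[i]$ gives $(x+i)(x-i) = pqy^2$. A parity check forces $y$ odd and $x$ even, so $x\pm i$ are coprime to $1+i$, and since their difference is $2i$ they are coprime in $\mathbb{Z}[i]$. Because $p,q\equiv 1 \pmod 4$ split as $p=\pi\overline{\pi}$, $q=\lambda\overline{\lambda}$ with primary primes, unique factorization sends $\pi,\overline{\pi}$ (resp.\ $\lambda,\overline{\lambda}$) to opposite factors $x\pm i$; after replacing primes by conjugates and absorbing a unit, I may assume $x+i = i^{k}\,\pi\lambda\,\beta^{2}$ for some $\beta\in\mathbb{Z}[i]$ and $k\in\{0,1,2,3\}$. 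Reducing this identity modulo the Gaussian primes that do not divide $x+i$ and evaluating the quartic residue characters on both sides expresses $\left(\frac{p}{q}\right)_4\left(\frac{q}{p}\right)_4$ in terms of the unit $i^{k}$ and the supplementary quartic symbols for $i$ and $1+i$.

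The main obstacle is precisely this last computation: organizing the case analysis over the unit $i^{k}$ and the four choices of conjugates, controlling the power of $1+i$ through whether $pq\equiv 1$ or $5 \pmod 8$, and then collapsing the resulting symbols to $+1$ by means of the biquadratic reciprocity law together with its supplements. Keeping the primary normalizations of $\pi$ and $\lambda$ consistent throughout is the delicate bookkeeping that makes or breaks the argument. A more structural alternative, which I would fall back on if the explicit computation proves unwieldy, is to interpret the equivalence that $\tau$ is even iff $pX^2-qY^2=\pm1$ is solvable as the statement that the ambiguous form class of $pX^{2}-qY^{2}$ of discriminant $4pq$ is principal; whether this holds is governed by the $4$-rank of the narrow class group of $\mathbb{Q}(\sqrt{pq})$, which R\'edei's theory expresses through the same quartic symbol product, again yielding the claim.
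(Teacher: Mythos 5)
The paper contains no proof of this proposition: it is quoted from Dirichlet \cite{dir1834} as a known result, so your proposal must stand on its own against the classical argument rather than against anything in the text. Your skeleton is in fact the standard modern proof (the one found, e.g., in Lemmermeyer's treatment of Dirichlet's theorem), and its reductions all check out: $\tau$ is even iff $X^2-NY^2=-1$ is unsolvable; your parity claim is correct (if $x$ were odd then $pqy^2=x^2+1\equiv 2\pmod 8$, impossible for odd $pq$), so $x$ is even and $y$ odd, $x\pm i$ are coprime of odd norm, and unique factorization in $\mathbb{Z}[i]$ gives $x+i=i^{k}\pi\lambda\beta^{2}$ up to replacing $\pi,\lambda$ by conjugates --- a harmless replacement, since the rational quartic symbols satisfy $\left(\frac{q}{p}\right)_4=\left(\frac{q}{\pi}\right)_4=\left(\frac{q}{\overline{\pi}}\right)_4$, both values being $\pm 1$. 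The contrapositive logic (solvability of the negative Pell equation forces the quartic product to equal $+1$) is the right direction of Dirichlet's theorem, and your Scholz--R\'edei fallback is a genuine alternative: with $\left(\frac{p}{q}\right)=1$ the ambiguous class of $(p,0,-q)$ lies in the principal genus, and R\'edei's criterion decides whether it is principal via the same quartic product, which combined with Proposition \ref{propevenperiod} settles the parity of $\tau$.

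One point where your description understates what remains: reducing $x+i=i^{k}\pi\lambda\beta^{2}$ modulo $\overline{\pi}$ and $\overline{\lambda}$ (where $x\equiv i$, hence $x+i\equiv 2i$) and multiplying the two evaluations leaves, besides the unit $i^{k}$ and the supplementary symbols for $i$ and $1+i$, the residual factor $\left(\frac{\beta}{\overline{\pi}}\right)_4^{2}\left(\frac{\beta}{\overline{\lambda}}\right)_4^{2}$: squares of quartic characters are quadratic characters of the unknown $\beta$, not automatically $1$. Eliminating this term costs one further reciprocity step --- quadratic reciprocity in $\mathbb{Z}[i]$ applied to $\beta$ against $\overline{\pi}\,\overline{\lambda}$, using $x-i\equiv -2i\pmod{\beta}$, or a norm argument via $\mathcal{N}(\beta)=\pm y$ reducing everything to rational symbols such as $\left(\frac{y}{p}\right)$. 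This step is standard and the computation does collapse to $+1$ as you predict, so your plan would succeed; but as written it stops precisely at the theorem's crux, so it should be read as an accurate blueprint of the known proof rather than a complete one.
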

The conditions in Proposition \ref{proppqodd} and Proposition \ref{proppqeven} are sufficient but not necessary, as showed in the following example.

\begin{example}
Consider $N = 5 \cdot 89 = 445$, then $\left( \frac{5}{89} \right)	= 1$ and the period of the continued fraction of $\sqrt{445}$ is $5$.\\
Consider $N = 13 \cdot 53 = 689$, then $\left( \frac{13}{53} \right)	= 1$, $\left( \frac{13}{53} \right)_4 \left( \frac{53}{13} \right)_4 = 1$ and the period of the continued fraction of $\sqrt{689}$ is $2$.
\end{example}

Determining the parity of the period when $N = pq$ and $p \equiv q \equiv 1 \pmod{4}$ remains a challenging problem. The following conjecture is the version of Theorem \ref{stevenhagenproved} restricted to integers with exactly two prime factors.

\begin{conjecture}[\cite{stevenhagen1993number}]\label{conjecure1}
Let $\mathcal{D}_2 = \{ N=pq \mid p \equiv q \equiv 1 \pmod{4} \}$, $\mathcal{D}_2^{-} = \{ N \in \mathcal{D}_2 \mid \tau \equiv 1 \pmod{2} \}$, $(\mathcal{D}_2)_{\le X}= \{ N \in \mathcal{D}_2 \mid N \le X \}$ and $(\mathcal{D}_2^{-})_{\le X}= \{ N \in \mathcal{D}_2^{-} \mid N \le X \}$. Then, the following limit 
\[ \lim_{X\rightarrow \infty} \frac{\# (\mathcal{D}_2^{-})_{\le X}}{\# (\mathcal{D}_2)_{\le X}} \]
exists and it is equal to $\frac{2}{3}$.
\end{conjecture}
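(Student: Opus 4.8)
The plan is to reduce the statement to an equidistribution problem for Rédei symbols attached to $D = pq$ and then to feed it into the analytic machinery developed for Theorem \ref{stevenhagenproved}. The natural starting point is the equivalence already used throughout this section: the period $\tau$ is odd if and only if the negative Pell equation \eqref{negpelleq} is solvable, which in turn holds if and only if the fundamental unit of $\mathbb{Q}(\sqrt{N})$ has norm $-1$. Thus membership $N \in \mathcal{D}_2^{-}$ is an arithmetic condition on the $2$-part of the narrow class group of $\mathbb{Q}(\sqrt{N})$, and the task is to compute the natural density of this condition as $p, q$ range over primes congruent to $1 \pmod 4$.

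First I would make the governing condition fully explicit through Rédei's theory. Since $D = pq$ has exactly two ramified primes, genus theory forces the narrow class group to have $2$-rank equal to $1$, and whether the fundamental unit has norm $-1$ is decided by a tower of conditions encoded by Rédei matrices over $\mathbb{F}_2$: the $2$-rank is controlled by $\left( \frac{p}{q} \right)$ (this is exactly the mechanism behind Propositions \ref{proppqodd} and \ref{proppqeven}), the $4$-rank by the quartic residue symbols $\left( \frac{p}{q} \right)_4 \left( \frac{q}{p} \right)_4$, and the higher $2^{k}$-ranks by iterated Rédei symbols. Writing $\mathbf{1}[N \in \mathcal{D}_2^{-}]$ as an expansion in these symbols, the conjectural density should then emerge from a Cohen--Lenstra/Gerth-type random model for the successive ranks; the resulting series has the shape $\tfrac12 + \tfrac18 + \tfrac{1}{32} + \cdots = \tfrac12 \sum_{k \ge 0} 4^{-k} = \tfrac23$, exactly mirroring the way the constant $1 - \prod_{j\ \mathrm{odd}}(1 - 2^{-j})$ arises in Theorem \ref{stevenhagenproved}. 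This heuristic step both fixes the target value $2/3$ and pins down precisely which averages must be estimated.

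The analytic core is then to prove that these averages genuinely agree with the model. Concretely, summing over $p, q \le X^{1/2}$ (or over $p \le X$ with $q \le X/p$), one must show that each individual Rédei symbol, and each product of finitely many of them, exhibits square-root cancellation and that symbols attached to different levels decorrelate. The first two levels --- the $2$-rank via $\left( \frac{p}{q} \right)$ and the $4$-rank via the quartic symbols --- should be accessible by character-sum and large-sieve techniques in the spirit of Fouvry--Klüners \cite{Fouvry_Kluners_period}, \cite{fouvry2010negative}. Controlling the $8$-rank and all higher $2^{k}$-ranks, however, requires the oscillation of genuinely higher Rédei symbols, that is, the ``spin''-type estimates and reflection/reciprocity arguments at the heart of Koymans--Pagano \cite{koymans2022stevenhagen} and \cite{Chan_Koymans_Milovic_Pagano}.

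The decisive obstacle is precisely this last point, and it is what keeps the statement a conjecture rather than a theorem. In Theorem \ref{stevenhagenproved} one averages over all squarefree $N$, so a typical $N$ carries many prime factors, and this abundance supplies the extra averaging that renders the joint equidistribution of the higher Rédei symbols tractable. Restricting to $N = pq$ collapses the sum onto a thin two-dimensional set of prime pairs, removing that averaging and leaving the higher-symbol oscillation essentially beyond the reach of current methods. I would therefore expect the first two levels to be provable unconditionally, yielding the correct density contribution up to the $4$-rank, while the full value $\tfrac23$ would follow only from a genuine two-prime analogue of the Koymans--Pagano equidistribution for higher Rédei symbols --- the hard and, as of now, unresolved part.
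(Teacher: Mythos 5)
There is nothing in the paper to compare your proposal against: the statement is labelled a \emph{conjecture} (due to Stevenhagen \cite{stevenhagen1993number}), the paper offers no proof, and indeed it explicitly remarks that determining the parity of $\tau$ for $N = pq$ with $p \equiv q \equiv 1 \pmod{4}$ remains a challenging open problem. Your writeup is therefore correct precisely where it matters most: you do not claim a proof, and your concluding assessment --- that the higher Rédei-symbol equidistribution over the thin set of prime pairs is the unresolved core, since the restriction to two prime factors destroys the averaging over many ramified primes that makes the full squarefree family of Theorem \ref{stevenhagenproved} tractable in \cite{koymans2022stevenhagen} --- is an accurate account of why this is still a conjecture. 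Had you pushed through and asserted the density $\tfrac{2}{3}$ unconditionally, that would have been a genuine error; as it stands, your text is a sound heuristic justification plus an honest identification of the obstruction.

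One correction to the heuristic scaffolding: your Rédei hierarchy is shifted by one level. For $D = pq$ with $p \equiv q \equiv 1 \pmod 4$, genus theory gives narrow $2$-rank equal to $1$ always; it is the \emph{$4$-rank} that is controlled by $\left( \frac{p}{q} \right)$ (this is the mechanism behind Proposition \ref{proppqodd}: if $\left( \frac{p}{q} \right) = -1$ the $4$-rank vanishes and \eqref{negpelleq} is solvable), and the \emph{$8$-rank} that is governed by the quartic symbols $\left( \frac{p}{q} \right)_4 \left( \frac{q}{p} \right)_4$ (the mechanism behind Dirichlet's Proposition \ref{proppqeven}), with iterated Rédei symbols entering only from the $16$-rank onward. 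This does not affect your density bookkeeping --- the Gerth-type model still yields $\tfrac12 + \tfrac18 + \tfrac1{32} + \cdots = \tfrac12 \sum_{k \ge 0} 4^{-k} = \tfrac23$, which is exactly Stevenhagen's predicted value --- but it matters for your claim about what is provable today: the two levels accessible by Fouvry--Klüners-type character-sum and large-sieve methods \cite{Fouvry_Kluners_period}, \cite{fouvry2010negative} correspond to the $4$- and $8$-rank conditions above, which yield unconditional upper and lower density bounds sandwiching $\tfrac23$, while everything from the $16$-rank up requires spin-type estimates in the style of \cite{koymans2022stevenhagen} and \cite{Chan_Koymans_Milovic_Pagano} that are not currently available for the two-prime family.
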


Cremona--Odoni \cite{cremonaodoni} and Stevenhagen \cite{stevenhagen1993number} studied the problem when the number of prime divisors equals a fixed integer $t \ge 1$.

We derive conditions on $N$ for a proper factorization, specifically conditions ensuring that $Q_{\tau/2} \neq 2$. The following theorem, proved by Mollin in \cite{mollin}, provides necessary and sufficient conditions, expressed in terms of Diophantine equations, for $\tau$ to be even and $Q_{\tau/2} = 2$.

\begin{theorem}[\cite{mollin}]\label{teogenelia}
The following statements are equivalent for $N > 2$.
\begin{enumerate}
    \item $X^2 - NY^2 = \pm 2$ is solvable, indicating that at least one of the equations $X^2 - NY^2 = 2$ and $X^2 - NY^2 =-2$ has a solution.
    \item $\tau$ is even and $Q_{\tau/2}=2$.
\end{enumerate}
\end{theorem}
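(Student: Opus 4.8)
The two directions are of quite different difficulty, and the whole content of the theorem lies in the converse. The implication $(2)\Rightarrow(1)$ is immediate: if $\tau$ is even and $Q_{\tau/2}=2$, then applying \eqref{sviluppoq} with $m=\tfrac{\tau}{2}-1\ge 0$ gives $p_{\tau/2-1}^2-Nq_{\tau/2-1}^2=(-1)^{\tau/2}Q_{\tau/2}=(-1)^{\tau/2}\cdot 2$, so that $(X,Y)=(p_{\tau/2-1},q_{\tau/2-1})$ solves $X^2-NY^2=2$ or $X^2-NY^2=-2$ according to the parity of $\tfrac{\tau}{2}$. Thus $(1)$ holds.

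For $(1)\Rightarrow(2)$ the plan is first to normalize a solution and extract a value $2$ in the sequence $\{Q_n\}$. If $X^2-NY^2=\pm 2$ then $\gcd(X,Y)^2\mid 2$, so every solution is automatically primitive, and replacing $X,Y$ by $|X|,|Y|$ I may assume $X,Y>0$. Assuming $N>4$ (the single remaining nonsquare case $N=3$ I would dispatch by direct computation, where $\tau=2$ and $Q_1=Q_{\tau/2}=2$), we have $2<\sqrt{N}$, so I can invoke the classical fact that every primitive solution of $s^2-Nt^2=k$ with $|k|<\sqrt{N}$ arises from a convergent of $\sqrt{N}$; hence $X/Y=p_m/q_m$ for some $m\ge 0$. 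Then $p_m^2-Nq_m^2=\pm 2$, and by \eqref{sviluppoq} this equals $(-1)^{m+1}Q_{m+1}$, so $Q_{m+1}=2$. Reducing the index modulo $\tau$ and recalling that $Q_0=Q_\tau=1\ne 2$, I obtain an index $j\in\{1,\dots,\tau-1\}$ with $Q_j=2$.

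The heart of the argument, and the step I expect to be the main obstacle, is to show that such a $j$ is forced to be the central index $\tfrac{\tau}{2}$; my plan is a uniqueness argument. For $1\le j\le\tau$ the quadratic irrational $\alpha_j=(P_j+\sqrt{N})/Q_j$ is reduced, so $-1<(P_j-\sqrt{N})/2<0$, which together with $Q_j=2$ confines $P_j$ to the open interval $(\sqrt{N}-2,\sqrt{N})$; since $\sqrt{N}$ is irrational this interval of length $2$ contains exactly the two consecutive integers $\lfloor\sqrt{N}\rfloor-1$ and $\lfloor\sqrt{N}\rfloor$. Moreover the divisibility $Q_j\mid N-P_j^2$ coming from \eqref{quadrirr} forces $P_j\equiv N\pmod 2$, which selects exactly one of these two integers. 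Hence $Q_j=2$ pins down the pair $(P_j,Q_j)$ uniquely, and since $\alpha_1,\dots,\alpha_\tau$ are pairwise distinct within one period, the index $j$ with $Q_j=2$ is unique in $\{1,\dots,\tau\}$. Finally the symmetry $Q_j=Q_{\tau-j}$ from Theorem \ref{teodelta} produces a second index $\tau-j\in\{1,\dots,\tau-1\}$ with value $2$; uniqueness forces $\tau-j=j$, so $\tau=2j$ is even and $Q_{\tau/2}=Q_j=2$, which is precisely $(2)$. The delicate points to verify carefully are the reducedness of $\alpha_j$ for $j\ge 1$ and the precise invocation of the convergent criterion, including the sign bookkeeping needed to pass between $X^2-NY^2=\pm2$ and $Q_j=2$.
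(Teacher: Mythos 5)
Your proposal is correct, but there is nothing in the paper to compare it against: Theorem \ref{teogenelia} is imported from Mollin \cite{mollin} with only a citation, and no proof is given in the text. Your blind argument therefore supplies a self-contained proof where the paper has none, and it holds up under scrutiny. The easy direction via \eqref{sviluppoq} with $m=\tau/2-1$ is exactly right, sign bookkeeping included. For the converse, each link in your chain checks out: primitivity from $\gcd(X,Y)^2\mid 2$; the Legendre-type convergent criterion, which needs $2<\sqrt{N}$, with $N=3$ settled by hand and $N=4$ excluded as a square; $Q_{m+1}=2$ from \eqref{sviluppoq} together with the positivity of the $Q_n$; reduction of the index modulo $\tau$ using $Q_0=Q_\tau=1\neq 2$. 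Your uniqueness step, the real content, is also sound: reducedness of $\alpha_j$ for $j\ge 1$ confines $P_j$ to $(\sqrt{N}-2,\sqrt{N})$, the relation $Q_{j-1}Q_j=N-P_j^2$ from \eqref{quadrirr} forces $P_j\equiv N\pmod 2$ and thus selects exactly one of the two integers in that open interval, and pairwise distinctness of $\alpha_1,\dots,\alpha_\tau$ follows from minimality of $\tau$ (if $\alpha_i=\alpha_{i'}$ with $i<i'\le\tau$, the purely periodic tail would admit the shorter period $i'-i$). The symmetry $Q_j=Q_{\tau-j}$ of Theorem \ref{teodelta} — which, as stated and proved in the paper, holds for either parity of $\tau$ — then forces $\tau=2j$, giving both conclusions of (2) at once; this is an elegant touch, since parity of $\tau$ is not assumed but deduced. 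The two ingredients external to the paper (the convergent criterion for $\lvert k\rvert<\sqrt{N}$ and the reducedness/pure periodicity of the complete quotients), which you correctly flag as the delicate points, are classical and can be found, e.g., in the paper's own reference \cite{sierp}. What your route buys is that the criterion the algorithm relies on (for deciding when $Q_{\tau/2}\ne 2$ yields a nontrivial factor) becomes provable entirely with machinery already developed in Sections \ref{sec:pre} and \ref{sec:pre-qf}, rather than resting on an external citation.
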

This result implies that, if $\tau \equiv 0 \pmod{2}$ and the two Diophantine equations 
\begin{equation}\label{pellduepm}
X^2 - N Y^2 = 2 \quad \text{and} \quad X^2 - N Y^2 = -2 \end{equation}
have no solutions, then the central term $Q_{\tau/2} \ne 2$, and so it contains a proper factor of $N$. The following result, due to Yokoi, and presented in \cite{yokoi_1994}, gives us sufficient conditions for the insolubility of the two equations in \eqref{pellduepm}. 

\begin{proposition}[\cite{yokoi_1994}]
For any positive nonsquare integer $N$, if the Diophantine equation $X^2 - N Y^2 = \pm 2$ has an integral solution, then 
\[N \equiv 2 \pmod{4} \quad \text{or} \quad N \equiv 3 \pmod{4}.\]
\end{proposition}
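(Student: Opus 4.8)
The plan is to prove the contrapositive by a direct congruence analysis modulo $4$. The key elementary fact is that a perfect square is congruent to either $0$ or $1$ modulo $4$, while the right-hand side satisfies $\pm 2 \equiv 2 \pmod 4$ (both $+2$ and $-2$ reduce to $2$). Since every $N$ falls into exactly one of the residue classes $0, 1, 2, 3 \pmod 4$, it suffices to rule out the two classes $N \equiv 0 \pmod 4$ and $N \equiv 1 \pmod 4$; any solvable instance must then force $N \equiv 2$ or $N \equiv 3 \pmod 4$, which is the assertion.

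First I would treat the case $N \equiv 0 \pmod 4$. Reducing $X^2 - N Y^2 = \pm 2$ modulo $4$ annihilates the term $N Y^2$, leaving $X^2 \equiv 2 \pmod 4$. Since $X^2 \in \{0, 1\} \pmod 4$, this congruence is unsolvable, and hence so is the Diophantine equation in this class.

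Next I would treat $N \equiv 1 \pmod 4$. Here the equation reduces to $X^2 - Y^2 \equiv 2 \pmod 4$. Enumerating the four possibilities with $X^2, Y^2 \in \{0, 1\} \pmod 4$ gives $X^2 - Y^2 \in \{0, 1, 3\} \pmod 4$, so again the value $2$ is never attained and no integral solution exists.

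Combining the two cases yields the contrapositive: a solvable instance of $X^2 - N Y^2 = \pm 2$ cannot have $N \equiv 0$ or $N \equiv 1 \pmod 4$, whence $N \equiv 2$ or $N \equiv 3 \pmod 4$. The argument is entirely elementary, and there is no genuine obstacle: the only point meriting a moment's care is that a single modulus, namely $4$, simultaneously disposes of both signs on the right-hand side and of both excluded residue classes, so the whole proof amounts to the bookkeeping of quadratic residues modulo $4$.
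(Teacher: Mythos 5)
Your proof is correct and complete. Note, however, that the paper itself gives no argument for this proposition at all: it is stated as imported from Yokoi's paper \cite{yokoi_1994}, so there is no internal proof to compare against. Your contrapositive argument is the standard elementary one for the necessity direction: since $-2 \equiv 2 \pmod{4}$, a single reduction modulo $4$ handles both signs at once; squares lie in $\{0,1\} \pmod{4}$, so $X^2 \equiv 2 \pmod{4}$ is impossible when $N \equiv 0 \pmod{4}$, and $X^2 - Y^2 \in \{0,1,3\} \pmod{4}$ rules out $N \equiv 1 \pmod{4}$; the two excluded classes being exactly the complement of $\{2,3\} \pmod{4}$, the statement follows. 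Your enumeration of the four cases is accurate. What your argument buys is self-containedness: the statement as quoted in the paper (which is only the necessity of $N \equiv 2$ or $3 \pmod{4}$, not Yokoi's finer solvability criteria) needs nothing beyond quadratic residues modulo $4$, so the citation could in principle be replaced by these three lines. The only stylistic remark is that the hypotheses ``positive'' and ``nonsquare'' play no role in your proof — correctly so, since the mod-$4$ obstruction is insensitive to them; they matter only for the surrounding continued-fraction context in which the paper applies the proposition.
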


Hence, the following set of integers guarantees parity of the period and $Q_{\tau/2} \ne 2$
\[\mathcal{F} = \left\{ N \in \mathbb{N} \mid N \equiv 1 \pmod{4} \text{ and } \exists \, p \text{ prime}, \, p \mid N \text{ such that } p \equiv 3 \pmod{4} \right\}.\]

\begin{proposition}
Let $N \in \mathbb{N}$ such that exist two primes $p \equiv 5 \pmod{8}$ and $q \equiv 4 \pmod{4}$ such that $pq \mid N$. Then, the period $\tau$ of the continued fraction expansion of $\sqrt{N}$ is even and $Q_{\tau/2} \ne 2$.
\end{proposition}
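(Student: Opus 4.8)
The plan is to decompose the conclusion into its two assertions and dispatch each with a result already available in this section. First I would establish that $\tau$ is even: since $q \equiv 3 \pmod 4$ is a prime dividing $N$ (and $N$ is nonsquare, as is implicitly required for the period of $\sqrt{N}$ to be defined), Proposition \ref{parityperiodprop} applies verbatim and yields that the period $\tau$ is even. No computation beyond this citation is needed for the first half of the statement.

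Next I would turn to the inequality $Q_{\tau/2} \ne 2$, which I intend to obtain from Mollin's Theorem \ref{teogenelia}. That theorem states that the conjunction ``$\tau$ even and $Q_{\tau/2} = 2$'' is equivalent to the solvability of $X^2 - NY^2 = \pm 2$. Having already shown $\tau$ is even, it suffices to prove that both equations in \eqref{pellduepm} are insoluble over $\mathbb{Z}$; the equivalence then leaves $Q_{\tau/2} \ne 2$ as the only remaining possibility.

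The heart of the argument is a local obstruction at $p$. Because $p \mid N$, any integer solution of $X^2 - NY^2 = \pm 2$ would reduce modulo $p$ to $X^2 \equiv \pm 2 \pmod p$, so it is enough to verify that neither $2$ nor $-2$ is a quadratic residue modulo $p$. I would read these off from the supplementary laws of quadratic reciprocity: since $p \equiv 5 \pmod 8$ we have $\left( \frac{2}{p} \right) = -1$, and since $p \equiv 1 \pmod 4$ we have $\left( \frac{-1}{p} \right) = 1$, whence $\left( \frac{-2}{p} \right) = \left( \frac{-1}{p} \right)\left( \frac{2}{p} \right) = -1$. Both congruences are thus unsolvable, so neither $X^2 - NY^2 = 2$ nor $X^2 - NY^2 = -2$ admits an integer solution.

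Combining the two parts, $\tau$ is even while the $\pm 2$ equations are insoluble, so Theorem \ref{teogenelia} forces $Q_{\tau/2} \ne 2$, completing the proof. There is no serious obstacle here: the result is essentially a corollary of Proposition \ref{parityperiodprop} and Theorem \ref{teogenelia}. The only point demanding attention is checking that the single hypothesis $p \equiv 5 \pmod 8$ simultaneously obstructs both the $+2$ and the $-2$ cases, which is precisely what the two Legendre-symbol computations confirm; the hypothesis $q \equiv 3 \pmod 4$ contributes only the guarantee of an even period.
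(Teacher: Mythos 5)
Your proof is correct and follows essentially the same route as the paper's: Proposition \ref{parityperiodprop} for the parity of $\tau$, the local obstruction modulo $p$ (where $p \equiv 5 \pmod 8$ makes both $2$ and $-2$ quadratic nonresidues) to rule out integral solutions of $X^2 - NY^2 = \pm 2$, and Theorem \ref{teogenelia} to conclude $Q_{\tau/2} \ne 2$. Your version merely spells out the supplementary-law computations that the paper compresses into ``which is absurd,'' and you correctly read the statement's evident typo $q \equiv 4 \pmod 4$ as $q \equiv 3 \pmod 4$.
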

\begin{proof}
By Proposition \ref{parityperiodprop} we deduce the parity of the period. If one of the two Diophantine equations $X^2 - NY^2 = 2$ and $X^2 - NY^2 =-2$ admits an integral solution, then $2$ or $-2$ is a quadratic residue modulo $p$, which is absurd. We conclude using Theorem \ref{teogenelia}.
\end{proof}

We now focus on RSA moduli $N = pq$ and summarize the results in Table \ref{tabellapq}.

\begin{corollary}
Let $N = pq$, where $p$ and $q$ are primes, and let $\tau$ be the period of the continued fraction expansion of $\sqrt{N}$.
\begin{enumerate}
\item If $p \equiv q \equiv 3 \pmod{4}$, then $\tau$ is even and $Q_{\tau/2}$ contains a nontrivial factor of $N$.
\item If $p \equiv 5 \pmod{8}$ and $q \equiv 3 \pmod{4}$, then $\tau$ is even and $Q_{\tau/2}$ contains a nontrivial factor of $N$.
\item If $p \equiv q \equiv 1 \pmod{4}$ and $\tau$ is even, then $Q_{\tau/2}$ contains a nontrivial factor of $N$.
\end{enumerate}
\end{corollary}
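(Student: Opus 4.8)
The plan is to handle all three cases by the same three-step scheme: first show that $\tau$ is even, then that $Q_{\tau/2}\neq 2$, and finally extract a nontrivial factor via Theorem~\ref{teoremaimportante}. Let me dispose of the last step first, since it is common to all cases. Once $\tau$ is even, Theorem~\ref{teoremaimportante} gives $Q_{\tau/2}\mid 2N$, and Proposition~\ref{boundsPQ} gives $0<Q_{\tau/2}<2\sqrt{N}$. Since $p$ and $q$ are distinct odd primes we have $N=pq>4$, so $2\sqrt{N}<pq$ and therefore $Q_{\tau/2}\notin\{pq,2pq\}$; moreover $Q_{\tau/2}\neq 1$, because for the expansion of $\sqrt{N}$ one has $Q_n=1$ exactly when $\tau\mid n$, while $0<\tau/2<\tau$. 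Hence, assuming also $Q_{\tau/2}\neq 2$, the value $Q_{\tau/2}$ is a divisor of $2pq$ lying in $\{p,2p,q,2q\}$, so $\gcd(Q_{\tau/2},N)\in\{p,q\}$ is a nontrivial factor of $N$. The corollary thus reduces, in each case, to the two assertions ``$\tau$ is even'' and ``$Q_{\tau/2}\neq 2$''.

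For the parity, in cases (1) and (2) the modulus $N$ has a prime factor congruent to $3\pmod{4}$ (both $p$ and $q$ in case (1), and $q$ in case (2)), so Proposition~\ref{parityperiodprop} immediately gives that $\tau$ is even; in case (3) the evenness of $\tau$ is assumed, so nothing is required.

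For $Q_{\tau/2}\neq 2$, I would invoke Mollin's Theorem~\ref{teogenelia}: combined with the parity just established, $Q_{\tau/2}=2$ would force one of $X^2-NY^2=\pm 2$ to be solvable, so it suffices to rule both out. In cases (1) and (3) one has $p\equiv q\pmod{4}$, hence $N=pq\equiv 1\pmod{4}$, and the proposition of Yokoi stated above shows that solvability of $X^2-NY^2=\pm 2$ forces $N\equiv 2$ or $3\pmod{4}$, a contradiction. In case (2) we have $N\equiv 3\pmod{4}$, so this congruence obstruction fails and I would argue modulo $p$ instead: a solution of $X^2-NY^2=2$ (resp. $-2$) yields $X^2\equiv 2$ (resp. $X^2\equiv -2$) $\pmod{p}$, i.e. $\left(\frac{2}{p}\right)=1$ (resp. $\left(\frac{-2}{p}\right)=1$); but $p\equiv 5\pmod{8}$ gives $\left(\frac{2}{p}\right)=-1$, and since $p\equiv 1\pmod{4}$ makes $\left(\frac{-1}{p}\right)=1$ we also get $\left(\frac{-2}{p}\right)=-1$, excluding both. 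Thus neither equation is solvable and $Q_{\tau/2}\neq 2$.

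The main obstacle I anticipate is not any single deep step but the careful bookkeeping in the common reduction of the first paragraph: the divisibility $Q_{\tau/2}\mid 2N$ alone does not deliver a factor of $N$, and one must combine it with $Q_{\tau/2}\neq 2$, with $Q_{\tau/2}\neq 1$, and with the size bound $Q_{\tau/2}<2\sqrt{N}$ to discard the spurious divisors $1,2,pq,2pq$ of $2pq$ and be left with a value that genuinely shares a prime with $N$. The only truly case-specific computation is the Legendre-symbol analysis for $p\equiv 5\pmod{8}$ in part (2).
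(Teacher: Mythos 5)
Your proof is correct and takes essentially the same route as the paper, which leaves this corollary as an immediate assembly of Proposition~\ref{parityperiodprop} for the parity, Yokoi's proposition in cases (1) and (3) (where $N \equiv 1 \pmod{4}$) and the quadratic-residue argument modulo $p \equiv 5 \pmod{8}$ in case (2) to rule out solvability of $X^2 - NY^2 = \pm 2$, and Mollin's Theorem~\ref{teogenelia} to conclude $Q_{\tau/2} \neq 2$. Your only addition is the explicit bookkeeping ($Q_{\tau/2} \neq 1$, and $Q_{\tau/2} < 2\sqrt{N}$ via Proposition~\ref{boundsPQ}, excluding the divisors $1, 2, pq, 2pq$ of $2N$) showing that $Q_{\tau/2} \mid 2N$ together with $Q_{\tau/2} \neq 2$ genuinely yields a nontrivial factor, a point the paper passes over silently.
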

Finally, in the case $N=pq$ with $p \equiv 1 \pmod{8}$ and $q \equiv 3 \pmod{4}$, we have a sufficient condition for a trivial factorization, proved by Ji in \cite{ji2005diophantine}.

\begin{proposition}[\cite{ji2005diophantine}]\label{propgrafi}
Let $N = pq$, where $p \equiv 1 \pmod{8}$ and $q \equiv 3 \pmod{4}$ are primes. If $\left( \frac{p}{q} \right) = -1$, then one of the following two Diophantine equations
\[
X^2 - NY^2 = 2 \quad \text{or} \quad X^2 - NY^2 = -2
\]
has an integral solution.
\end{proposition}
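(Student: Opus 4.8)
The plan is to translate the solvability of $X^2-NY^2=\pm 2$ into a principality statement for the ramified prime above $2$, and then resolve that question with genus theory, where the hypothesis $\left(\frac{p}{q}\right)=-1$ will enter precisely as the condition forcing the $4$-rank of the class group to vanish. First I would record the structure of the field $K=\mathbb{Q}(\sqrt{N})$: since $p\equiv 1$ and $q\equiv 3\pmod 4$, we have $N=pq\equiv 3\pmod 4$ and $N$ squarefree, so $\mathcal{O}_K=\mathbb{Z}[\sqrt N]$ has discriminant $4N$ and $2$ ramifies as $2\mathcal{O}_K=\mathfrak{p}_2^2$ with $\mathcal{N}(\mathfrak p_2)=2$. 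An element $X+Y\sqrt N$ of norm $\pm 2$ generates the unique ideal of norm $2$, namely $\mathfrak p_2$, and conversely a generator of $\mathfrak p_2$ has norm $\pm 2$; hence $X^2-NY^2=\pm 2$ is solvable if and only if $\mathfrak p_2$ is principal in the \emph{wide} class group $\mathrm{Cl}_K$. I would also note that, since $q\equiv 3\pmod 4$ divides $N$, the negative Pell equation $X^2-NY^2=-1$ is unsolvable (equivalently $\tau$ is even, cf.\ Proposition \ref{parityperiodprop} and the classical criterion above), so the fundamental unit has norm $+1$ and the natural surjection $\mathrm{Cl}_K^{+}\to\mathrm{Cl}_K$ from the narrow class group has kernel of order $2$, generated by the class $\kappa=[(\sqrt N)]^{+}$ of the (widely principal, narrowly nonprincipal) ideal $(\sqrt N)=\mathfrak p_p\mathfrak p_q$.

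Next I would set up genus theory. The discriminant factors into prime discriminants as $4pq=(-4)\cdot p\cdot(-q)$, using $p\equiv 1$, $q\equiv 3\pmod 4$; there are three ramified primes $2,p,q$, so the group of ambiguous classes equals $\mathrm{Cl}_K^{+}[2]$ and has order $2^{3-1}=4$, and it is detected by the three genus characters $\chi_{-4},\chi_{p},\chi_{-q}$ subject to the single relation $\chi_{-4}\chi_{p}\chi_{-q}=1$. The plan is to compute these characters on the classes $[\mathfrak p_p]$, $[\mathfrak p_q]$, $[\mathfrak p_2]$ of the ramified primes, evaluating each $\chi_{d_i}$ on an ideal of norm $n$ coprime to $d_i$ as the Kronecker symbol $\left(\frac{d_i}{n}\right)$ and using the relation to recover the missing value.

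The crux is the $4$-rank computation, and this is where $\left(\frac{p}{q}\right)=-1$ is decisive. Using $p\equiv 1\pmod 4$ I have $\left(\frac{-1}{p}\right)=1$, and quadratic reciprocity gives $\chi_{-q}(\mathfrak p_p)=\left(\frac{-q}{p}\right)=\left(\frac{q}{p}\right)=\left(\frac{p}{q}\right)=-1$, while $\chi_{p}(\mathfrak p_q)=\left(\frac{p}{q}\right)=-1$ directly; combined with $\chi_{-4}(\mathfrak p_p)=\left(\frac{-1}{p}\right)=1$ and $\chi_{-4}(\mathfrak p_q)=\left(\frac{-1}{q}\right)=-1$, the images of $[\mathfrak p_p]$ and $[\mathfrak p_q]$ under $(\chi_{-4},\chi_p)$ are the two distinct nontrivial vectors $(+,-)$ and $(-,-)$, hence independent in $(\mathbb{Z}/2)^2$. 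This says exactly that the associated Rédei matrix has full $\mathbb{F}_2$-rank $2$, so $\mathrm{rank}_4(\mathrm{Cl}_K^{+})=0$ and the genus map is an \emph{isomorphism} $\mathrm{Cl}_K^{+}[2]\xrightarrow{\sim}$ (genus group). (Had $\left(\frac{p}{q}\right)=+1$, the class $[\mathfrak p_p]$ would land in the principal genus and the map would fail to be injective; this is the whole point of the hypothesis.)

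Finally, knowing the genus map is injective, it suffices to identify the genus characters of $\mathfrak p_2$. Using $p\equiv 1\pmod 8$ one gets $\chi_p(\mathfrak p_2)=\left(\frac{p}{2}\right)=+1$, and $\chi_{-q}(\mathfrak p_2)=\left(\frac{-q}{2}\right)$ equals $+1$ when $q\equiv 7\pmod 8$ and $-1$ when $q\equiv 3\pmod 8$. In the first case all three characters of $[\mathfrak p_2]$ are trivial, so $[\mathfrak p_2]^{+}=1$ and $\mathfrak p_2$ is narrowly principal, yielding $X^2-NY^2=+2$; in the second case the character vector of $[\mathfrak p_2]$ coincides with that of $\kappa=[\mathfrak p_p][\mathfrak p_q]$, forcing $[\mathfrak p_2]^{+}=\kappa$, which is trivial in $\mathrm{Cl}_K$, so $\mathfrak p_2$ is widely principal with a generator of negative norm, yielding $X^2-NY^2=-2$. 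Either way $\mathfrak p_2$ is principal and one of the two equations is solvable. I expect the main obstacle to be the narrow-versus-wide bookkeeping of this last step: correctly identifying the kernel class $\kappa$, assigning genus characters to principal ideals with negative-norm generators, and thereby pinning down which sign $\pm 2$ actually occurs as $q$ ranges over the two residue classes mod $8$.
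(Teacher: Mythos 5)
Your argument is correct, and it necessarily differs from the paper, which states this proposition without proof, importing it from Ji \cite{ji2005diophantine}, whose original treatment is combinatorial (solvability criteria for $x^2-Dy^2=\pm 2$ phrased in terms of ``odd graphs'' whose edges record Legendre symbols between the prime divisors of $D$ --- hence the label of the proposition). Your route is classical genus theory in the narrow class group of $K=\mathbb{Q}(\sqrt{pq})$, and every step checks out: since $N\equiv 3\pmod 4$ is squarefree, $\mathcal{O}_K=\mathbb{Z}[\sqrt{N}]$, so solvability of $X^2-NY^2=\pm 2$ is indeed equivalent to wide principality of the ramified prime $\mathfrak{p}_2$; the factorization $4N=(-4)\cdot p\cdot(-q)$ yields the three genus characters with the product relation; the character vectors $(+,-)$ and $(-,-)$ of $[\mathfrak{p}_p]$ and $[\mathfrak{p}_q]$ are computed correctly (with $\chi_p(\mathfrak{p}_p)$ recovered from the relation), and since these two classes then generate a subgroup of order $4=2^{t-1}=\#\mathrm{Cl}_K^{+}[2]$ mapping isomorphically onto the genus group, the genus map is injective on the $2$-torsion, i.e.\ the $4$-rank vanishes --- exactly where $\left(\frac{p}{q}\right)=-1$ enters, as you say. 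The endgame is also sound: $p\equiv 1\pmod 8$ gives $\chi_p(\mathfrak{p}_2)=+1$; for $q\equiv 7\pmod 8$ all characters of $[\mathfrak{p}_2]$ are trivial, and for $q\equiv 3\pmod 8$ its vector $(-,+,-)$ matches that of the kernel class $\kappa=[\mathfrak{p}_p\mathfrak{p}_q]^{+}$, which is wide-trivial and narrowly nontrivial because the fundamental unit has norm $+1$ (guaranteed by $q\equiv 3\pmod 4$, cf.\ Proposition \ref{parityperiodprop}); since a positive-norm generator automatically gives narrow principality, the sign bookkeeping you flagged as the delicate point is handled correctly. Beyond the cited statement, your proof actually delivers more: the sign is determined, with $X^2-NY^2=2$ solvable when $q\equiv 7\pmod 8$ and $X^2-NY^2=-2$ when $q\equiv 3\pmod 8$. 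What Ji's graph formalism buys instead is a packaging of the same R\'edei-type rank computation that scales more transparently to moduli with many prime factors, whereas your genus-theoretic argument is self-contained and makes visible precisely why $\left(\frac{p}{q}\right)=-1$ is sufficient but not necessary (for $\left(\frac{p}{q}\right)=+1$ the class $[\mathfrak{p}_2]$ may land in the principal genus without the $4$-rank obstruction vanishing, consistent with the example $N=731$ in the paper).
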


The following example demonstrates that the conditions in Proposition \ref{propgrafi} are sufficient but not necessary.

\begin{example}
Let $N = 17 \cdot 43 = 731$. Then, $\left( \frac{17}{43} \right) = 1$, the period $\tau$ of the continued fraction expansion of $\sqrt{731}$ is $2$, and $Q_{\tau/2} = 2$.
\end{example}

Table \ref{tabellapq} summarizes the results described above for the case of RSA moduli.

\begin{table}[H]\label{tabellapq}
\centering
\setlength{\aboverulesep}{-0.45pt} \setlength{\belowrulesep}{-0.45pt}
\centering
\begin{tabular}{*{4}{|c}|}
\hline
\multirow{1}{*}{$p \pmod{8}$}& {$q \pmod{8}$}& {$\tau \pmod{2}$} & {$Q_{\tau/2}$}\\[1pt]
\hline
3 & 3 & \multirow{3}{*}{$0$} & \multirow{3}*{\minitab[c]{ $\ne 2$ }} \\
3 & 7 & &  \\
7 & 7 & &  \\[1pt]
\hline 
5 & 7 & \multirow{2}{*}{$0$} & \multirow{2}*{\minitab[c]{ $\ne 2$}} \\
5 & 3 &  &  \\[1pt]
\hline 
1 & 7 & \multirow{2}{*}{$0$} & \multirow{2}*{\minitab[c]{If $\left( \frac{p}{q} \right) = -1$, then $=2$}} \\
1 & 3 &  &  \\[1pt]
\hline 
1 & 1 & \multirow{3}*{\minitab[c]{\vspace{1pt}If $\left( \frac{p}{q} \right)_4 \left( \frac{q}{p} \right)_4 = -1$, then $0$\\ If $\left( \frac{p}{q} \right) = -1$, then $1$}} &  \multirow{3}*{\minitab[c]{If $\tau$ even, then $\ne 2$}} \\
1 & 5 &  &  \\
5 & 5 & &  \\[1.5pt]
\hline
\end{tabular}
\caption{Conditions for the parity of the period and nontrivial factorization for $N = pq$.}
\label{tab:label_tabella}
\end{table}

\section{Quadratic forms}\label{sec:pre-qf}

An overview of binary quadratic forms can be found in \cite{buell}. 
\begin{definition}
A \textit{binary quadratic form} is a polynomial $F(X, Y) = aX^2+bXY+cY^2$, with $a, b, c \in \mathbb{Z}$.
The \textit{matrix associated} with $F$ is
\[ M_F =\begin{bmatrix}  a & b/2 \\ b/2 & c \end{bmatrix}.\]
\end{definition}
We abbreviate a binary quadratic form with coefficients $a,b,c$ as $(a,b,c)$.

\begin{definition}\label{defequivalent}
Two quadratic forms $F$ and $F'$ are \textit{equivalent} if there exists a matrix $C \in \mathbb{Z}^{2 \times 2}$ such that 
\[ M_{F'} = C^{T} M_F C\]
and $\det(C) = \pm 1$. If $\det(C) = 1$, the forms are \textit{properly equivalent} and we write $F \sim F'$.
\end{definition}

\begin{definition}
The \textit{discriminant} $\Delta$ of a quadratic form $(a,b,c)$ is $\Delta = b^2 -4ac$. We define $\mathbb{F}_{\Delta}$ as the set of all quadratic forms of discriminant $\Delta$.
\end{definition}
The discriminant is an invariant for the equivalence relation of quadratic forms $\sim$: if $F \in \mathbb{F}_{\Delta}$, and $F \sim F'$, then $F' \in \mathbb{F}_{\Delta}$.

\begin{definition}\label{defreduced}
A quadratic form $(a,b,c)$, with positive discriminant $\Delta= b^2 -4ac$ is \textit{reduced} if
\begin{equation} \label{reduced}
\quad  \left | \sqrt{\Delta} -2  \left | a \right | \right | < b < \sqrt{\Delta}.\end{equation}
\end{definition}

Given a quadratic form $F$, it is always possible to find a reduced quadratic form equivalent to $F$. In the following we are going to prove it giving a reduction algorithm on quadratic forms of positive nonsquare discriminant. To do so, we first need the following definition.

\begin{definition}\label{defrho}
For any form $F = (a, b, c)$ with $ac \ne 0$ of discriminant $\Delta$, a nonsquare positive integer, we define the \textit{standard reduction operator} $\rho$ by
\[\rho(a, b, c) = \left (c, r(-b, c), \frac{r(-b, c)^2 - \Delta}{4c} \right ), \]
where $r(-b, c)$ is defined to be the unique integer $r$ such that $r +b \equiv 0 \pmod{2c}$ and
\[ -|c| < r \le |c| \quad \text{if} \quad \sqrt{\Delta}< |c|,\]
\[\sqrt{\Delta} - 2|c| < r < \sqrt{\Delta} \quad \text{if} \quad |c| < \sqrt{\Delta}.\]
$\rho(F)$ is called the \textit{reduction} of $F$. The \textit{inverse reduction operator} is defined by
\[\rho^{-1}(a,b,c) = \left ( \frac{r(-b, c)^2 - \Delta}{4c}, r(-b, a), a \right ).\]
\end{definition}
We denote $\rho^n(F)$ the result of $n$ applications of $\rho$ on $F$. The identities $\rho(\rho^{-1}(F)) = \rho^{-1}(\rho(F)) = F$ hold when $F$ is reduced. We point out the fact that $(a,b,c)\sim \rho(a,b,c)$ through the transformation given by the matrix 
\[\begin{bmatrix} 0 & -1 \\ 1 & t \end{bmatrix},\]
where $r(-b,c)=-b+2ct$.
The proof of the following fundamental proposition can be found in \cite[p.~264]{cohenacourse}.
\begin{proposition}[\cite{cohenacourse}]\label{propreductiontre}
\ 
\begin{enumerate}
    \item The number of iterations of $\rho$ which are necessary to reduce a form $(a, b, c)$ is at most $2 + \left  \lceil \log_2 ( |c|/ \sqrt{\Delta} ) \right \rceil$.
    \item If $F = (a, b, c)$ is a reduced form, then $\rho(a, b, c)$ is again a reduced form.
\end{enumerate}
\end{proposition}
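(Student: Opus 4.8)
The plan is to prove the two assertions separately, and to establish part (2) first since it underpins the termination count in part (1).

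For part (2), assume $(a,b,c)$ is reduced, i.e.\ $\lvert \sqrt{\Delta} - 2\lvert a\rvert\rvert < b < \sqrt{\Delta}$. The first step is to turn this into two-sided control on $\lvert c\rvert$. Since $b < \sqrt{\Delta}$ forces $b^2 < \Delta$, the relation $\Delta = b^2 - 4ac$ gives $ac < 0$ and $\lvert a\rvert\,\lvert c\rvert = (\Delta - b^2)/4$; combined with $\tfrac{\sqrt{\Delta}-b}{2} < \lvert a\rvert < \tfrac{\sqrt{\Delta}+b}{2}$ (which is just a rewriting of reducedness) this yields $\tfrac{\sqrt{\Delta}-b}{2} < \lvert c\rvert < \tfrac{\sqrt{\Delta}+b}{2}$, and in particular $\lvert c\rvert < \sqrt{\Delta}$. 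Because $\lvert c\rvert < \sqrt{\Delta}$, the definition of $r(-b,c)$ selects the branch $\sqrt{\Delta} - 2\lvert c\rvert < b' < \sqrt{\Delta}$ with $b' := r(-b,c)$, which immediately gives the upper inequality $b' < \sqrt{\Delta}$ required for $\rho(a,b,c) = (c,b',c')$ to be reduced. For the lower inequality $\lvert \sqrt{\Delta} - 2\lvert c\rvert\rvert < b'$ I would split on the sign of $\sqrt{\Delta} - 2\lvert c\rvert$: when $2\lvert c\rvert \le \sqrt{\Delta}$ it follows at once from $b' > \sqrt{\Delta} - 2\lvert c\rvert$, while when $2\lvert c\rvert > \sqrt{\Delta}$ I would use $\lvert c\rvert < \tfrac{\sqrt{\Delta}+b}{2}$ to show $-b < \sqrt{\Delta} - 2\lvert c\rvert$, so that the unique representative of $-b$ landing in the required interval is $b' = 2\lvert c\rvert - b$; then $b' = 2\lvert c\rvert - b > 2\lvert c\rvert - \sqrt{\Delta} = \lvert \sqrt{\Delta} - 2\lvert c\rvert\rvert$ because $b < \sqrt{\Delta}$. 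This last case is the main obstacle of part (2).

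For part (1) the idea is a geometric-descent estimate on the size of the trailing coefficient. Write $(a_0,b_0,c_0) = (a,b,c)$ and $(a_{i+1},b_{i+1},c_{i+1}) = \rho(a_i,b_i,c_i)$, noting that $\Delta$ being a nonsquare integer rules out $\lvert c_i\rvert = \sqrt{\Delta}$. As long as $\lvert c_i\rvert > \sqrt{\Delta}$, the operator uses the branch with $\lvert b_{i+1}\rvert \le \lvert c_i\rvert$, so that $\lvert c_{i+1}\rvert = \lvert b_{i+1}^2 - \Delta\rvert/(4\lvert c_i\rvert) < c_i^2/(4\lvert c_i\rvert) = \lvert c_i\rvert/4$, using $b_{i+1}^2 \le c_i^2$ and $0 < \Delta < c_i^2$. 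Hence $\lvert c_i\rvert$ more than halves at each step while it exceeds $\sqrt{\Delta}$, so after at most $\lceil \log_2(\lvert c\rvert/\sqrt{\Delta})\rceil$ iterations one reaches $\lvert c_k\rvert \le \sqrt{\Delta}$. The remaining $+2$ is a bounded final phase: once the trailing coefficient has fallen below $\sqrt{\Delta}$, at most two further applications of $\rho$ produce a reduced form, by precisely the branch-and-sign discussion used in part (2), and part (2) then guarantees that all later iterates stay reduced, which justifies the phrase \emph{at most}.

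I expect the genuine difficulty to be concentrated in two spots: the case $2\lvert c\rvert > \sqrt{\Delta}$ of part (2), where one must pin down the exact representative $b' = 2\lvert c\rvert - b$ from the congruence and the interval constraint; and the constant-step boundary count in part (1), namely verifying that no more than two extra iterations are needed after $\lvert c\rvert$ drops below $\sqrt{\Delta}$ and that a genuinely reduced form (both inequalities, not just the upper one) is then attained. By contrast, the geometric-descent estimate itself is routine once the correct branch of $r(-b,c)$ has been identified.
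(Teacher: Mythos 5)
The paper offers no proof of this proposition at all — it defers entirely to \cite[p.~264]{cohenacourse} — and your sketch follows exactly the standard argument found there: reducedness yields the two-sided bound $\tfrac{\sqrt{\Delta}-b}{2} < |c| < \tfrac{\sqrt{\Delta}+b}{2}$, in the case $2|c| > \sqrt{\Delta}$ the congruence and interval constraint pin down the representative $b' = 2|c| - b$ just as you say, and while $|c_i| > \sqrt{\Delta}$ the trailing coefficient decays geometrically (your factor $4$ is even stronger than the factor $2$ that the stated bound requires). The one step you only gesture at — that at most two further applications of $\rho$ suffice once $|c| < \sqrt{\Delta}$, which indeed cannot be read off from part (2) since the input form there is assumed reduced — does go through: if $2|c| < \sqrt{\Delta}$ the branch condition $\sqrt{\Delta} - 2|c| < r < \sqrt{\Delta}$ coincides with the reducedness condition for the new form, so one application suffices, while if $\sqrt{\Delta}/2 < |c| < \sqrt{\Delta}$ the branch forces $|r| < \sqrt{\Delta}$ and hence $|c'| = (\Delta - r^2)/(4|c|) < \sqrt{\Delta}/2$, which lands you in the first case after a single step.
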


\begin{remark}\label{remarkfinitenumber}
\emph{
If $(a, b, c)$, of discriminant $\Delta >0$, is reduced, then $\left | a \right |$, $b$ and $\left | c \right |$ are less than $\sqrt{\Delta}$, and $a$ and $c$ are of opposite signs (\cite[p.~262]{cohenacourse}). This implies that the number of reduced quadratic forms of discriminant $\Delta$ is finite.}
\end{remark}

\begin{definition}\label{defadj}
Two forms $F(X, Y) = aX^2+bXY+cY^2$ and $F'(X, Y) = a'X^2+b'XY+c'Y^2$ are \textit{adjacent} if $c = a'$ and $b + b' \equiv 0 \pmod{2c}$.
\end{definition} 

Given a reduced quadratic form $F$, there exists a unique reduced quadratic form equivalent to $F$ and adjacent to $F$. This form is $\rho(F)$. As we have seen in Remark \ref{remarkfinitenumber}, there exists a finite number of reduced quadratic forms of positive discriminant $\Delta$, and so this process eventually repeats, forming a \textit{cycle}. The significant aspect of this is that the cycle consists of all the reduced forms equivalent to the first form, as proved in \cite[109--113]{jacobson2008solving}.

\begin{definition}\label{defprincipalcycle}
We call the \textit{principal form} the unique reduced form of discriminant $\Delta$ having as first coefficient $1$. It is denoted by $\underline{1}$ and the cycle in which it lies is called the \textit{principal cycle}. 
\end{definition}

\begin{definition}
Let $\mathbf{\Upsilon}= \{ \qfelia_n \}_{n \ge 0}$ be the sequence of binary quadratic forms defined as
\[ \qfelia_m(X,Y) = (-1)^{m}Q_{m} X^2 + 2 P_{m+1} XY + (-1)^{m+1}Q_{m+1}Y^2, \quad \text{for }m \ge0.\]
\end{definition}

\begin{remark}
\emph{
If $\tau$ is even, then the sequence $\mathbf{\Upsilon}$ is periodic of period $\tau$, and if $\tau$ is odd $\mathbf{\Upsilon}$ is periodic of period $2\tau$. This is due to Theorem \ref{teodelta} and Theorem \ref{teoomega}.}
\end{remark}

\begin{definition}\label{definizionecompositioneformecapitolodue} 
Let $F=(a_1, b_1, c_1)$ and $G=(a_2, b_2, c_2)$ two quadratic forms having same discriminant $\Delta$. The \textit{Gauss composition} of $F$ and $G$ is
\begin{equation}\label{formulacomposizione}
F \circ G = (a_3, b_3, c_3) = \left ( d_0 \frac{a_1 a_2}{n^2}, b_1 + \frac{2 a_1}{n} \left ( \frac{s(b_2 - b_1)}{2} - c_1 v\right ), \frac{b_3^2 - \Delta}{4 a_3} \right ),
\end{equation}
where $\beta = (b_1 + b_2)/2$, $n = \gcd(a_1, a_2, \beta)$, $s, u, v$ such that $a_1 s + a_2 u + \beta v = n$, and $d_0 = \gcd(a_1,a_2,\beta, c_1,c_2,(b_1-b_2)/2)$. Although the composition is not unique, all compositions of given forms $F$ and $G$ are equivalent.
\end{definition}
We remark that all quadratic forms in $\mathbf{\Upsilon}$ have the same discriminant $\Delta =4N$, where $N>0$ is the nonsquare integer we want to factorize. This implies that for all $(a,b,c) \in \mathbf{\Upsilon}$, we have
\[\Delta \equiv b^2 \equiv b \pmod{2},\]
and so $b \equiv 0 \pmod{2}$. Therefore, the value $\beta$ in the Definition \ref{definizionecompositioneformecapitolodue} is an integer.
A quadratic form $F=(a,b,c)$ is \textit{primitive} if $\gcd( a, b, c ) = 1$. As proved in the next proposition, the forms in $\mathbf{\Upsilon}$ are primitive.

\begin{proposition}
The forms $\qfelia_n$ are primitive for all $n\ge0$.
\end{proposition}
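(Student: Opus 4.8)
The plan is to show that no prime $\ell$ can divide all three coefficients of $\qfelia_m = \bigl((-1)^m Q_m,\, 2P_{m+1},\, (-1)^{m+1}Q_{m+1}\bigr)$, equivalently that $\gcd(Q_m, 2P_{m+1}, Q_{m+1}) = 1$. The whole argument runs through the two identities of Proposition \ref{proptildeqtildep}: from \eqref{sviluppoq}, used at indices $m-1$ and $m$, we have $(-1)^m Q_m = p_{m-1}^2 - Nq_{m-1}^2$ and $(-1)^{m+1}Q_{m+1} = p_m^2 - Nq_m^2$, while from \eqref{sviluppop} we have $(-1)^m P_{m+1} = p_mp_{m-1} - Nq_mq_{m-1}$. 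The external fact I would extract first is that consecutive convergents are coprime: taking determinants in \eqref{cfmatrici} gives $p_mq_{m-1} - p_{m-1}q_m = (-1)^{m+1}$, an odd number equal to $\pm 1$, whose square is $1$.

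For an odd prime $\ell$ dividing $Q_m$, $2P_{m+1}$, and $Q_{m+1}$, coprimality of $\ell$ with $2$ forces $\ell \mid P_{m+1}$, so reducing the three identities modulo $\ell$ yields $p_m^2 \equiv Nq_m^2$, $p_{m-1}^2 \equiv Nq_{m-1}^2$, and $p_mp_{m-1}\equiv Nq_mq_{m-1} \pmod{\ell}$. Substituting these into the expansion
\[ (p_mq_{m-1}-p_{m-1}q_m)^2 = p_m^2q_{m-1}^2 + p_{m-1}^2q_m^2 - 2p_mp_{m-1}q_mq_{m-1} \]
collapses the right-hand side to $Nq_m^2q_{m-1}^2 + Nq_m^2q_{m-1}^2 - 2Nq_m^2q_{m-1}^2 \equiv 0 \pmod{\ell}$, while the left-hand side equals $1$; hence $\ell \mid 1$, a contradiction.

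The step I expect to be the main obstacle, and the only one not covered by the above computation, is the prime $\ell = 2$: since the middle coefficient $2P_{m+1}$ is even regardless of the parity of $P_{m+1}$, divisibility of $2P_{m+1}$ by $2$ gives no congruence on the cross term, so the squared-determinant trick is unavailable. Here I would instead suppose $2 \mid Q_m$ and $2 \mid Q_{m+1}$ and reduce only the two $Q$-identities modulo $2$; using $x^2 \equiv x \pmod{2}$ they become $p_m \equiv Nq_m$ and $p_{m-1}\equiv Nq_{m-1} \pmod{2}$. Multiplying the first by $q_{m-1}$, the second by $q_m$, and subtracting gives $p_mq_{m-1}-p_{m-1}q_m \equiv Nq_mq_{m-1}-Nq_{m-1}q_m \equiv 0 \pmod{2}$, again contradicting that this determinant is $\pm 1$. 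Since every prime is thereby excluded, $\gcd(Q_m,2P_{m+1},Q_{m+1}) = 1$, and $\qfelia_m$ is primitive for all $m \ge 0$. (One could alternatively note that $\qfelia_0$ has leading coefficient $Q_0 = 1$, hence is the principal form, and invoke invariance of the content under equivalence; but since the excerpt has not yet placed the whole sequence $\mathbf{\Upsilon}$ in a single cycle, the direct congruence argument is the safer route.)
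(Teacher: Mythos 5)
Your proof is correct, but it takes a genuinely different route from the paper's. The paper argues by induction on $n$: the base case is $Q_0 = 1$, and the inductive step uses the Lagrange recurrences \eqref{quadrirr} (namely $P_{n+1} = a_n Q_n - P_n$ and $Q_{n+1} = (N - P_{n+1}^2)/Q_n = Q_{n-1} - a_n^2 Q_n + 2a_n P_n$) to rewrite $\gcd(Q_n, 2P_{n+1}, Q_{n+1})$ as $\gcd(Q_n, -2P_n, Q_{n-1})$, which is the content of $\qfelia_{n-1}$ and hence $1$ by the inductive hypothesis. You instead give a direct, non-inductive argument at each fixed index $m$, combining the identities of Proposition \ref{proptildeqtildep} with the unimodularity relation $p_m q_{m-1} - p_{m-1}q_m = (-1)^{m+1}$ obtained by taking determinants in \eqref{cfmatrici}, and you split off the prime $\ell = 2$ — rightly so, since the middle coefficient $2P_{m+1}$ is automatically even and your squared-determinant computation needs the congruence $p_m p_{m-1} \equiv N q_m q_{m-1} \pmod{\ell}$, which is only available for odd $\ell$; your separate mod-$2$ argument via $x^2 \equiv x \pmod 2$ closes that case cleanly and without any parity assumption on $N$. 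What each approach buys: the paper's induction is shorter and stays entirely within the recurrences \eqref{quadrirr}, never invoking the convergents; your argument is self-contained at a single index, makes the arithmetic cause visible (coprimality of consecutive convergents forces trivial content), and would apply verbatim to any one form $\qfelia_m$ in isolation. Your closing remark is also well judged: the shortcut via $\qfelia_0 = \underline{1}$ and invariance of the content under proper equivalence would be premature at this point in the paper, since the fact that reduction and composition keep one inside the cycle $\mathbf{\Upsilon}$ is developed only after (and partly relies on) this primitivity statement, so the direct congruence argument is indeed the safer route.
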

\begin{proof}
We prove the statement by induction on $n$. \\
Base step ($n=0$): The base step is proved recalling that $Q_0=1$. \\
Inductive step ($n \Rightarrow n+1$): The result follows from these equalities:
\[\begin{aligned}\gcd(Q_{n}, 2 P_{n+1}, Q_{n+1}) &= \gcd(Q_n, 2(a_nQ_n -P_n), (N-P_{n+1}^2)/Q_n) \\
&= \gcd(Q_n,-2P_n, Q_{n-1} -a_{n}^2Q_n +2a_nP_n) \\
&= \gcd(Q_{n},  -2P_n, Q_{n-1}) =1. \end{aligned} \]
\end{proof}
This implies that in the Gauss composition of two elements of $\mathbf{\Upsilon}$, the coefficient $d_0$ is always equal to $1$.

Using the definition of $\rho$ and Equation \eqref{quadrirr}, it is straightforward to prove that
\[\rho^n(\qfelia_0)=\qfelia_n.\]
The form $\qfelia_0$ is reduced and equal to $\underline{1}$, so the quadratic forms in $\mathbf{\Upsilon}$ are reduced, and $\mathbf{\Upsilon}$ is the principal cycle. Moreover, for any pair of forms $\qfelia_n, \qfelia_m \in \mathbf{\Upsilon}$, their Gauss composition $\qfelia_n \circ \qfelia_m$ is equivalent to $\qfelia_0$. This follows from the property proved by Gauss in \cite[Article 237-239]{gauss1966}: if $F \sim G$, then $H \circ F \sim H \circ G$, for all quadratic forms $F, G, H$ having same discriminant. In particular, we obtain
\[\qfelia_n \circ \qfelia_m \sim \qfelia_n \circ \qfelia_0 \sim \qfelia_n \sim \qfelia_0,\] 
using also the fact that $\qfelia_0 \circ \qfelia_n \sim \qfelia_n$ for all $n \ge 0$. Consequently, applying the Gauss composition to any couple of elements of $\mathbf{\Upsilon}$, followed by a sufficient number of applications of $\rho$ to obtain a reduced form, results in an element of $\mathbf{\Upsilon}$.

As mentioned previously, we are interested in quickly finding the coefficient $Q_{\tau/2}$ when $\tau$ is even, or $Q_{(\tau + 1)/2}$ when $\tau$ is odd. Consequently, we aim to determine the quadratic form $\qfelia_{\tau/2}$, or $\qfelia_{(\tau - 1)/2}$, in an efficient manner (i.e. with time complexity $O(\ln(N)^{\alpha})$ with $\alpha$ constant). The value of $\tau$ could be too large (see Remark \ref{remarkperiod}), so we need a way to make longer jumps within the principal cycle. 
As we will see, Gauss composition, followed by the reduction, will allow us to make long jumps in $\mathbf{\Upsilon}$. To estimate the length of these jumps we use the (well-known) \textit{infrastructural distance} $\delta$. A comprehensive definition and detailed description of distance is provided in \cite[279--283]{cohenacourse}.  

\begin{definition}
Given a quadratic form $F=(a,b,c)$ of discriminant $\Delta>0$, the \textit{infrastructural distance} $\delta$ of $F$ and $\rho(F)$ is
\[ \delta(F, \rho(F)) = \frac{1}{2} \ln \left | \frac{b + \sqrt{\Delta}}{b - \sqrt{\Delta}} \right |. \]
Given $n>0$, the  distance $\delta$ of $F$ and $\rho^n(F)$ is
\[ \delta(F,\rho^n(F)) = \sum_{i=1}^n \delta(\rho^{i-1}(F),\rho^{i}(F)). \]
\end{definition}

We now restrict ourselves to forms in the principal cycle. We then have the following proposition.

\begin{proposition}[\cite{cohenacourse}]\label{propquadrato}
Let $\qfelia_n$ and $\qfelia_m$ be two reduced forms in the principal cycle, and let $\qfelia_0$ be the principal form. Then, if we define $G = \qfelia_n \circ \qfelia_m$, $G$ may not be reduced, but let $\qfelia_r$ be a (non unique) form obtained from $G$ by the reduction algorithm, i.e. by successive
applications of $\rho$. Then we have
\[\delta(\qfelia_0,\qfelia_r) = \delta(\qfelia_0,\qfelia_n) + \delta(\qfelia_0,\qfelia_m) + \delta(G,\qfelia_r),\]
and furthermore,
\begin{equation}\label{distanzariduzione}
\left | \delta(G,\qfelia_r) \right | < 2 \ln \Delta,\end{equation}
where $\Delta$ is the discriminant of these forms.
\end{proposition}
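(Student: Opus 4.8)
The plan is to reinterpret the infrastructural distance through the principal ideals attached to the forms of $\mathbf{\Upsilon}$, which turns both claims into statements about the multiplicative behaviour of generators. To each reduced form $\qfelia_n$ in the principal cycle I attach the generator
\[ \theta_n = \prod_{m=0}^{n-1}\frac{\sqrt{N}+P_{m+1}}{Q_{m+1}}, \]
so that the ideal represented by $\qfelia_n$ equals $\theta_n$ times the ideal represented by $\qfelia_0$ (with $\theta_0=1$). Using Lemma \ref{lemmauguaglianza} this product telescopes to $\theta_n=(-1)^n(p_{n-1}-q_{n-1}\sqrt{N})^{-1}$, i.e. $\theta_n$ is $\mathfrak{c}_{n-1}$ up to the rational factor $\mathcal{N}(\mathfrak{c}_{n-1})$. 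Since the defining step distance is $\delta(\qfelia_m,\qfelia_{m+1})=\frac12\ln\left|\frac{2P_{m+1}+\sqrt{\Delta}}{2P_{m+1}-\sqrt{\Delta}}\right|$ with $\Delta=4N$, and $0\le P_{m+1}<\sqrt{N}$ by Proposition \ref{boundsPQ}, a direct check gives
\[ \delta(\qfelia_0,\qfelia_n)=\tfrac12\ln\left|\frac{\theta_n}{\overline{\theta_n}}\right|, \]
the rational factors cancelling in $\theta_n/\overline{\theta_n}$. As a consistency check fixing the normalisation, summing over a full period reproduces $\tfrac12\ln\bigl|\mathfrak{c}_{\tau-1}^2\bigr|=\ln\mathfrak{c}_{\tau-1}$ through Lemma \ref{lemmagamma1.44} and \eqref{quarta}.

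With this dictionary, the first identity is essentially formal. First I would record that $\delta$ is additive along any reduction chain, $\delta(\qfelia_0,\qfelia_r)=\delta(\qfelia_0,G)+\delta(G,\qfelia_r)$, which is immediate because $\delta$ is defined as a telescoping sum of logarithms and the expression $\tfrac12\ln|\theta/\overline{\theta}|$ extends this bookkeeping to the (possibly non-reduced) form $G$ via its generator. Second, I would use the fact that Gauss composition realises multiplication of the corresponding ideals, so that the generator of $G=\qfelia_n\circ\qfelia_m$ equals $\theta_n\theta_m$ up to a rational scalar; since such a scalar is fixed by conjugation it disappears in $\theta/\overline{\theta}$, giving
\[ \delta(\qfelia_0,G)=\tfrac12\ln\left|\frac{\theta_n\theta_m}{\overline{\theta_n}\,\overline{\theta_m}}\right|=\delta(\qfelia_0,\qfelia_n)+\delta(\qfelia_0,\qfelia_m). \]
Combining the two displays yields the claimed equality.

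For the bound \eqref{distanzariduzione} I would estimate the relative minimum $\mu=\theta_{\qfelia_r}/\theta_G$ by which reduction multiplies the ideal of $G$, since $\delta(G,\qfelia_r)=\tfrac12\ln|\mu/\overline{\mu}|$. Because $\qfelia_n$ and $\qfelia_m$ are reduced, Remark \ref{remarkfinitenumber} bounds all their coefficients by $\sqrt{\Delta}$, and the composition formula in Definition \ref{definizionecompositioneformecapitolodue} (with $d_0=1$) then bounds the coefficients of $G$ by a fixed power of $\Delta$; likewise $\qfelia_r$ is reduced, so its coefficients are below $\sqrt{\Delta}$. Taking norms in the relation $\mathfrak{a}_{\qfelia_r}=\mu\,\mathfrak{a}_G$ gives $|\mathcal{N}(\mu)|=|a_{\qfelia_r}|/|a_G|$, while $|\mu|$ is controlled by the ratio of the magnitudes of the roots of $G$ and $\qfelia_r$; feeding in the coefficient bounds and using $|\mu/\overline{\mu}|=|\mu|^2/|\mathcal{N}(\mu)|$ produces $\tfrac12\bigl|\ln|\mu/\overline{\mu}|\bigr|<2\ln\Delta$. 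Alternatively, one may bound the reduction step count by $2+\lceil\log_2(|c_G|/\sqrt{\Delta})\rceil$ via Proposition \ref{propreductiontre} and control each step distance separately.

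I expect this last bound to be the main obstacle: the additivity in the first part is a clean consequence of the generator picture, whereas controlling $\delta(G,\qfelia_r)$ forces one to make the coefficient growth under Gauss composition explicit and to convert it into a genuine bound on $|\mu/\overline{\mu}|$, all the while keeping track of the signs and absolute values hidden in the definition of $\delta$ (individual reduction steps need not contribute positively). Pinning down the constant $2$, rather than some larger absolute constant, is precisely where the reducedness of $\qfelia_n,\qfelia_m,\qfelia_r$ and the exact shape of the composition formula must be used.
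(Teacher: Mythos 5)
You have reconstructed the additivity half correctly, and along the same lines as the paper, which gives no self-contained proof: it rests on the exact additivity of $\delta$ under composition before reduction (\cite[p.~281]{cohenacourse}) plus Lenstra's estimate for the reduction correction. Your telescoping of Lemma \ref{lemmauguaglianza} into $\theta_n=(-1)^n\left(p_{n-1}-q_{n-1}\sqrt{N}\right)^{-1}$ and the identity $\delta(\qfelia_0,\qfelia_n)=\tfrac{1}{2}\ln\left|\theta_n/\overline{\theta_n}\right|$ are right (and consistent with the paper's own computations in Theorems \ref{distanzaciclopari} and \ref{distanzaciclodispari}); the one ingredient you invoke rather than prove --- that the explicit composite \eqref{formulacomposizione} carries generator $\theta_n\theta_m$ up to a rational scalar --- is precisely the fact the paper cites from Cohen, so treating it as known is reasonable.

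The genuine gap is the bound \eqref{distanzariduzione}, which you do not establish, and for which the paper itself defers to Section 12 of \cite{Lenstra}. In your route (a), the norm bookkeeping only pins $|\mathcal{N}(\mu)|=|a_{\qfelia_r}|/|a_G|$ inside $[\Delta^{-1},\sqrt{\Delta}]$ (from $|a_{\qfelia_r}|<\sqrt{\Delta}$ and $1\le |a_G|=|a_na_m|/n^2<\Delta$); the decisive input is a two-sided bound on $|\mu|$ itself, and ``controlled by the ratio of the magnitudes of the roots'' is exactly where Lenstra's infrastructure analysis lives --- you assert the conclusion $\tfrac{1}{2}\bigl|\ln|\mu/\overline{\mu}|\bigr|<2\ln\Delta$ rather than derive it, as you yourself concede. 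More seriously, your fallback route (b) cannot work as stated: Proposition \ref{remdistrho}(1) bounds $\delta(F,\rho(F))$ only for \emph{reduced} $F$, while for the non-reduced intermediate forms the per-step quantity $\tfrac{1}{2}\ln\left|\frac{b+\sqrt{\Delta}}{b-\sqrt{\Delta}}\right|$ is merely $O(\ln\Delta)$ (since $|b|$ is polynomially large in $\Delta$ after composition and $|b^2-\Delta|=4|ac|\ge 4$), and Proposition \ref{propreductiontre} gives $2+\lceil\log_2(|c_G|/\sqrt{\Delta})\rceil=O(\ln\Delta)$ steps; multiplying the two yields only $O\bigl((\ln\Delta)^2\bigr)$, a full factor of $\ln\Delta$ weaker than \eqref{distanzariduzione}. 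The content of the cited estimate is precisely that the signed step contributions essentially cancel while the form is far from reduced --- equivalently, that $\mu/\overline{\mu}$ telescopes to a ratio bounded by a fixed power of $\Delta$, with only the last $O(1)$ steps contributing on the scale $\ln\Delta$ --- and no argument in your sketch captures this, nor the specific constant $2$.
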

The above proposition follows from the property that $\delta$ is exactly additive under composition before any reductions are made (see \cite[p.~281]{cohenacourse}) and the estimation of the bound for $\left | \delta(G,\qfelia_r)\right |$ discussed in Section 12 of \cite{Lenstra}.

The next proposition is fundamental for a computational point of view. Indeed, given $\qfelia_i$, $\qfelia_j$, with $i < j$, and their distance $\delta(\qfelia_i, \qfelia_j)$, it gives an estimation of $j-i$. In particular, if $\delta(\qfelia_i, \qfelia_j) = D$, then $\frac{2D}{\ln (4N)}<j-i< \frac{2D}{\ln 2} + 1$.

\begin{proposition}[\cite{Lenstra}]\label{remdistrho}
Let $F \in \mathbb{F}_{\Delta}$ reduced. The following two bounds hold:
\begin{enumerate}
    \item $\delta(F, \rho(F)) < \frac{1}{2} \ln \Delta$,
    \item $\delta( F , \rho^2( F )) > \ln 2$, and the same holds for $\rho^{-1}$.
\end{enumerate}
\end{proposition}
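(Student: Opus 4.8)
The plan is to unwind the definition of $\delta$ directly in terms of the middle coefficient and to exploit the reducedness inequalities of Definition \ref{defreduced}. Write $F = (a,b,c)$. Since $F$ is reduced we have $0 < b < \sqrt{\Delta}$, so $b + \sqrt\Delta > 0 > b - \sqrt\Delta$ and hence $\delta(F,\rho(F)) = \tfrac12\ln\tfrac{\sqrt\Delta + b}{\sqrt\Delta - b}$. For part (1) I would use $\Delta - b^2 = -4ac$ together with Remark \ref{remarkfinitenumber}, which says $a,c$ are nonzero integers of opposite sign, so $\Delta - b^2 = 4|a||c| \ge 4$. This gives $\sqrt\Delta - b = \tfrac{4|a||c|}{\sqrt\Delta + b} > \tfrac{4}{2\sqrt\Delta} = \tfrac{2}{\sqrt\Delta}$, whence $\tfrac{\sqrt\Delta+b}{\sqrt\Delta - b} < \tfrac{2\sqrt\Delta}{2/\sqrt\Delta} = \Delta$ and therefore $\delta(F,\rho(F)) < \tfrac12\ln\Delta$.

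For part (2), write $\rho(F) = (c, b_1, c_1)$ and $\rho^2(F) = (c_1, b_2, c_2)$. Since $\rho(F)$ is again reduced (Proposition \ref{propreductiontre}), the same computation yields $\delta(F,\rho^2(F)) = \tfrac12\ln\left(\tfrac{\sqrt\Delta+b}{\sqrt\Delta-b}\cdot\tfrac{\sqrt\Delta + b_1}{\sqrt\Delta - b_1}\right)$, so $\delta(F,\rho^2(F)) > \ln 2$ is equivalent to the single inequality $\tfrac{(\sqrt\Delta+b)(\sqrt\Delta+b_1)}{(\sqrt\Delta - b)(\sqrt\Delta - b_1)} > 4$. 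I would feed in two structural facts. First, reducedness of $F$ gives $|\sqrt\Delta - 2|c|| < b$ (the reduced inequalities are symmetric in the two outer coefficients, since for a reduced form $|c|$ also lies in $(\tfrac{\sqrt\Delta-b}{2},\tfrac{\sqrt\Delta+b}{2})$), while reducedness of $\rho(F)$—whose leading coefficient is $c$—gives $|\sqrt\Delta - 2|c|| < b_1$; setting $X = 2|c|$ this reads $b, b_1 > |\sqrt\Delta - X|$. Second, the adjacency built into $\rho$ (Definition \ref{defrho}) forces $b + b_1 \equiv 0 \pmod{2c}$, and since $b, b_1 > 0$ this means $b + b_1 = kX$ for some integer $k \ge 1$.

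I would then treat $f = \tfrac{(\sqrt\Delta+b)(\sqrt\Delta+b_1)}{(\sqrt\Delta-b)(\sqrt\Delta-b_1)}$ as a function of $b,b_1$ subject to the fixed sum $b + b_1 = kX$. Substituting $b = \tfrac{kX}{2} + d$ and $b_1 = \tfrac{kX}{2} - d$ gives $f = \tfrac{(\sqrt\Delta + kX/2)^2 - d^2}{(\sqrt\Delta - kX/2)^2 - d^2}$, which is increasing in $d^2$; hence $f$ is minimized over the admissible interval at the symmetric point $d = 0$, where $f = \left(\tfrac{\sqrt\Delta + kX/2}{\sqrt\Delta - kX/2}\right)^2$. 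Finally, the constraint $\tfrac{kX}{2} > |\sqrt\Delta - X|$ forces $kX > \tfrac{2}{3}\sqrt\Delta$: when $X < \sqrt\Delta$ it gives $X > \tfrac{2\sqrt\Delta}{k+2}$, so $kX > \tfrac{2k}{k+2}\sqrt\Delta \ge \tfrac{2}{3}\sqrt\Delta$ for $k \ge 1$, while $X \ge \sqrt\Delta$ is immediate; and $kX > \tfrac{2}{3}\sqrt\Delta$ is exactly equivalent to $\tfrac{\sqrt\Delta + kX/2}{\sqrt\Delta - kX/2} > 2$, i.e. $f > 4$. The statement for $\rho^{-1}$ follows from the same argument applied to the inverse reduction operator. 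I expect the main obstacle to be the bookkeeping in this last step: one must check that when the symmetric point falls outside the admissible range for $b,b_1$ the value of $f$ is only larger (because one coefficient is then pushed toward $\sqrt\Delta$, blowing $f$ up), and that the two reducedness bounds combine to give $kX > \tfrac{2}{3}\sqrt\Delta$ uniformly in $k$.
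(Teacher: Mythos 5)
Your proposal is correct, but note that the paper does not actually prove this proposition: it is quoted from \cite{Lenstra}, where the bounds are obtained in the continued-fraction/infrastructure language (the step distances are logarithms of complete quotients $\alpha_m=(P_m+\sqrt N)/Q_m$, and the two-step bound comes from $\alpha_m\alpha_{m+1}>2$). Your argument is a genuinely self-contained alternative carried out purely at the level of reduced forms, and it checks out. Part (1) is fine: $a,c\neq 0$ and of opposite sign give $\Delta-b^2=4|a||c|\ge 4$, hence $\sqrt\Delta-b>2/\sqrt\Delta$ and the ratio is $<\Delta$. In part (2) your two structural inputs are both sound: the reduction condition \eqref{reduced} is indeed symmetric in the outer coefficients (from $2|a|\in(\sqrt\Delta-b,\sqrt\Delta+b)$ and $4|a||c|=(\sqrt\Delta-b)(\sqrt\Delta+b)$ one gets $2|c|\in(\sqrt\Delta-b,\sqrt\Delta+b)$, which is your claim $b>|\sqrt\Delta-X|$), and $b+b_1$ is a positive multiple of $2|c|$ by the congruence in Definition \ref{defrho} together with $b,b_1>0$. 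Two small remarks that actually simplify your write-up: the boundary bookkeeping you worry about at the end is unnecessary, since $f=\frac{A-d^2}{B-d^2}$ with $A=(\sqrt\Delta+s/2)^2>B=(\sqrt\Delta-s/2)^2$ and $B-d^2=(\sqrt\Delta-b)(\sqrt\Delta-b_1)>0$ satisfies $f\ge f(0)=A/B$ \emph{pointwise} for every admissible $d$ (the inequality $f(d^2)\ge f(0)$ reduces to $d^2(A-B)\ge 0$), so no discussion of where the symmetric point sits is needed; also $s/2>|\sqrt\Delta-X|$ follows simply by averaging the two reducedness bounds, and $s<2\sqrt\Delta$ guarantees $\sqrt\Delta-s/2>0$, so the equivalence $s>\tfrac23\sqrt\Delta\iff A/B>4$ is legitimate and your case analysis in $k$ closes the proof with strict inequality. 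For the $\rho^{-1}$ clause you should add the one-line justification that $\rho^{-1}$ maps reduced forms to reduced forms (it inverts the permutation that $\rho$ induces on the finite set of reduced forms of the cycle, cf.\ Proposition \ref{propreductiontre} and the identities $\rho(\rho^{-1}(F))=\rho^{-1}(\rho(F))=F$), after which the bound is the even-step bound applied to $G=\rho^{-2}(F)$. What your route buys is independence from the continued-fraction formalism of \cite{Lenstra}; what it costs is the case analysis in $k$ and $X$, which the classical proof replaces by the single inequality on partial quotients.
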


In our case, the discriminant of the forms in $\mathbf{\Upsilon}$ is $\Delta=4N$, where $N$ is an odd nonsquare integer. Theorem \ref{distanzaciclopari} (proved also in \cite{elia2019}) and Theorem \ref{distanzaciclodispari} show that the distance between quadratic forms can be considered modulo
\[R^{+}(N) = \begin{cases} \ln(p_{\tau -1} + q_{\tau -1} \sqrt{N})= \ln(\mathfrak{c}_{\tau -1}) & \text{if }\tau \equiv 0 \pmod{2}\\
\ln(p_{2\tau -1} + q_{2\tau -1} \sqrt{N})= \ln(\mathfrak{c}_{2\tau -1}) & \text{if }\tau \equiv 1 \pmod{2}\\
\end{cases} .\]

\begin{theorem}\label{distanzaciclopari}
If $\tau$ is even, the distance $\delta(\qfelia_0, \qfelia_{\tau})$ (the distance of a period) is exactly equal to $\ln(\mathfrak{c}_{\tau -1})$ and the distance $\delta(\qfelia_0,\qfelia_{\tau/2})$ is exactly equal to $\frac{1}{2}\delta(\qfelia_0, \qfelia_{\tau})$.
\end{theorem}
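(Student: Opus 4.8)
The plan is to reduce everything to the elementary single-step distance and then invoke Lemma~\ref{lemmagamma1.44}. Since $\rho^n(\qfelia_0) = \qfelia_n$ and each form $\qfelia_m = ((-1)^m Q_m,\, 2P_{m+1},\, (-1)^{m+1}Q_{m+1})$ has middle coefficient $b = 2P_{m+1}$ and discriminant $\Delta = 4N$, I would first unwind the definition of $\delta$ to compute the step
\[
\delta(\qfelia_m, \qfelia_{m+1}) = \frac{1}{2}\ln\left| \frac{2P_{m+1} + 2\sqrt{N}}{2P_{m+1} - 2\sqrt{N}} \right| = \frac{1}{2}\ln\left| \frac{\sqrt{N} + P_{m+1}}{P_{m+1} - \sqrt{N}} \right|.
\]
Summing over $m = 0, \ldots, \tau - 1$, which is exactly $\delta(\qfelia_0,\qfelia_\tau)$ by the additive definition of the distance, and pulling the sum of logarithms into the logarithm of a product gives
\[
\delta(\qfelia_0, \qfelia_\tau) = \frac{1}{2} \ln \left| \prod_{m=0}^{\tau-1} \frac{\sqrt{N} + P_{m+1}}{P_{m+1} - \sqrt{N}} \right| = \frac{1}{2}\ln\left| \frac{\gamma}{\overline{\gamma}} \right|,
\]
where $\gamma = \prod_{m=0}^{\tau-1}(\sqrt{N} + P_{m+1})$ is precisely the quantity introduced in Lemma~\ref{lemmagamma1.44} and $\overline{\gamma} = \prod_{m=0}^{\tau-1}(P_{m+1} - \sqrt{N})$.

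Next I would apply Lemma~\ref{lemmagamma1.44}, which yields $\gamma/\overline{\gamma} = \mathfrak{c}_{\tau-1}^2$. Since $\mathfrak{c}_{\tau-1} = p_{\tau-1} + q_{\tau-1}\sqrt{N} > 0$ this ratio is positive, so the absolute value disappears and
\[
\delta(\qfelia_0, \qfelia_\tau) = \frac{1}{2}\ln\bigl(\mathfrak{c}_{\tau-1}^2\bigr) = \ln(\mathfrak{c}_{\tau-1}),
\]
which is the first assertion. I would remark that $0 \le P_{m+1} < \sqrt{N}$ (Proposition~\ref{boundsPQ}) together with $\tau$ even forces $\overline{\gamma} > 0$, consistent with $\gamma/\overline{\gamma} = \mathfrak{c}_{\tau-1}^2 > 0$.

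For the second assertion I would split the telescoping sum at the midpoint and show the two halves are equal. The first half $\sum_{m=0}^{\tau/2 - 1}\delta(\qfelia_m, \qfelia_{m+1})$ involves the values $P_1, \ldots, P_{\tau/2}$, whereas the second half $\sum_{m=\tau/2}^{\tau-1}\delta(\qfelia_m, \qfelia_{m+1})$ involves $P_{\tau/2+1}, \ldots, P_\tau$. The palindromic symmetry $P_{\tau - m + 1} = P_m$ from Theorem~\ref{teoomega} shows that the reindexing $j \mapsto \tau - j + 1$ carries the index set $\{\tau/2+1, \ldots, \tau\}$ bijectively onto $\{1, \ldots, \tau/2\}$ while preserving each $P$-value. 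Because every step distance $\delta(\qfelia_m, \qfelia_{m+1})$ depends on $m$ only through $P_{m+1}$, the two half-sums agree term by term, whence $\delta(\qfelia_0, \qfelia_{\tau/2}) = \tfrac{1}{2}\delta(\qfelia_0, \qfelia_\tau)$.

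All the substantive work — the product identity $\gamma/\overline{\gamma} = \mathfrak{c}_{\tau-1}^2$ and the symmetry of the sequence $\{P_n\}$ — is already in hand, so the argument is essentially bookkeeping. The only points demanding care are tracking the absolute values and the sign of $\overline{\gamma}$ so that the logarithm collapses to $\ln(\mathfrak{c}_{\tau-1})$, and verifying that the midpoint $\tau/2$ splits the period into two blocks carrying exactly matching $P$-values under the palindrome symmetry.
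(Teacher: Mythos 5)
Your proof is correct and takes essentially the same route as the paper's: both telescope the single-step distances $\frac{1}{2}\ln\left|\frac{\sqrt{N}+P_{m+1}}{\sqrt{N}-P_{m+1}}\right|$ into a product governed by Lemma~\ref{lemmagamma1.44} (the paper invokes the intermediate identity \eqref{quarta} from that lemma's proof, getting $\mathfrak{c}_{\tau-1}$ directly, while you use the lemma's squared statement $\gamma/\overline{\gamma}=\mathfrak{c}_{\tau-1}^2$ and then handle the sign of $\overline{\gamma}$ --- the same content), and both deduce the half-period equality from the symmetry $P_{\tau-m+1}=P_m$ of Theorem~\ref{teoomega}. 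Your explicit midpoint reindexing merely spells out what the paper dismisses as an ``immediate consequence.''
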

\begin{proof}
The distance between $\qfelia_{\tau}$ and $\qfelia_{0}$ is the summation
\[d(\qfelia_{0},\qfelia_{\tau}) = \sum_{i=0}^{\tau -1} d(\qfelia_i, \qfelia_{i+1}) = \sum_{i=1}^{\tau} \frac{1}{2} \ln \left( \frac{\sqrt{N} + P_i}{\sqrt{N} - P_i} \right) = \frac{1}{2} \ln \left ( \prod_{i=1}^{\tau} \frac{\sqrt{N} + P_i}{\sqrt{N} - P_i} \right ). \]
Recalling that $N- P_i^2 = Q_i Q_{i-1} >0$, and taking into account the periodicity of the sequence $\{Q_n\}_{n \ge 0}$, the last expression can be written with rational denominator as
\[\frac{1}{2} \ln \left ( \prod_{i=1}^{\tau} \frac{(\sqrt{N} + P_i)^2}{Q_i Q_{i-1}} \right ) = \frac{1}{2} \ln \left ( \prod_{i=1}^{\tau} \frac{(\sqrt{N} +P_i)^2}{ Q_i^2} \right ) = \ln \left ( \prod_{i=1}^{\tau} \frac{\sqrt{N} + P_i}{ Q_i} \right ).\]
The conclusion follows from Equation \eqref{quarta}. The equality $d(\qfelia_{0}, \qfelia_{\tau/2}) = \frac{1}{2}d(\qfelia_{0},\qfelia_{\tau})$ is an immediate consequence of the symmetry of the sequence $\{P_n\}_{n \ge 1}$ within a period.
\end{proof}

We now give a similar result for the case of odd period.

\begin{theorem}\label{distanzaciclodispari}
If $\tau$ is odd, the distance $\delta(\qfelia_0, \qfelia_{2\tau})$ (the distance of a period) is exactly equal to $\ln(\mathfrak{c}_{2\tau -1})$ and the distance $\delta(\qfelia_0, \qfelia_{\tau})$ is equal to $\ln(\mathfrak{c}_{2\tau -1})/2$.
\end{theorem}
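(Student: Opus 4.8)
The plan is to mirror the proof of Theorem \ref{distanzaciclopari}, the only structural change being that the relevant period of the form sequence $\mathbf{\Upsilon}$ is now $2\tau$ rather than $\tau$ (when $\tau$ is odd the signs $(-1)^m$ in the definition of $\qfelia_m$ force the period to double), and that the closing identity is \eqref{quinta} instead of \eqref{quarta}. First I would expand the distance of a full period as a telescoping sum of adjacent distances,
\[ \delta(\qfelia_0, \qfelia_{2\tau}) = \sum_{i=0}^{2\tau-1} \delta(\qfelia_i, \qfelia_{i+1}). \]
Each summand is computed directly from the definition of $\delta$: the middle coefficient of $\qfelia_i$ is $2P_{i+1}$ and the discriminant is $\Delta = 4N$, so $\delta(\qfelia_i, \qfelia_{i+1}) = \frac{1}{2} \ln \left| \frac{2P_{i+1}+2\sqrt{N}}{2P_{i+1}-2\sqrt{N}} \right| = \frac{1}{2} \ln \frac{\sqrt{N}+P_{i+1}}{\sqrt{N}-P_{i+1}}$, where the absolute value is removed using $0 \le P_{i+1} < \sqrt{N}$ (Proposition \ref{boundsPQ}). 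Reindexing by $j = i+1$ and collecting terms yields $\delta(\qfelia_0, \qfelia_{2\tau}) = \frac{1}{2} \ln \prod_{j=1}^{2\tau} \frac{\sqrt{N}+P_j}{\sqrt{N}-P_j}$.

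Next I would rationalize each denominator via $N - P_j^2 = Q_j Q_{j-1} > 0$, turning the product into $\prod_{j=1}^{2\tau} \frac{(\sqrt{N}+P_j)^2}{Q_j Q_{j-1}}$. The one genuinely new point — the step I expect to require the most care — is showing that $\prod_{j=1}^{2\tau} Q_j Q_{j-1}$ is a perfect square. Here one invokes the periodicity of $\{Q_n\}$, whose period is $\tau$ (Theorem \ref{teodelta}): since $2\tau$ is a multiple of $\tau$ we have $Q_{2\tau} = Q_0$, so the shifted products $\prod_{j=1}^{2\tau} Q_j$ and $\prod_{j=0}^{2\tau-1} Q_j$ coincide, whence the denominator equals $\bigl(\prod_{j=1}^{2\tau} Q_j\bigr)^2$. (In the even case this role was played by the parity of $\tau$; for period $2\tau$ the number of factors is automatically even, so no sign subtlety arises.) Consequently the argument of the logarithm is the square of $\prod_{j=1}^{2\tau} \frac{\sqrt{N}+P_j}{Q_j}$, and therefore
\[ \delta(\qfelia_0, \qfelia_{2\tau}) = \ln \prod_{j=1}^{2\tau} \frac{\sqrt{N}+P_j}{Q_j} = \ln(\mathfrak{c}_{2\tau-1}) \]
by \eqref{quinta}, which proves the first assertion.

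Finally, for the claim $\delta(\qfelia_0, \qfelia_{\tau}) = \frac{1}{2}\ln(\mathfrak{c}_{2\tau-1})$, I would split the period at its midpoint $\tau$ and exploit the periodicity of $\{P_n\}$, which also has period $\tau$ (Theorem \ref{teoomega}). Since each adjacent distance depends only on $P_{i+1}$ and $P_{j+\tau} = P_j$, the partial sum over $i = \tau, \dots, 2\tau-1$ equals the partial sum over $i = 0, \dots, \tau-1$; that is, $\delta(\qfelia_\tau, \qfelia_{2\tau}) = \delta(\qfelia_0, \qfelia_\tau)$. Hence each half equals $\frac{1}{2}\delta(\qfelia_0, \qfelia_{2\tau}) = \frac{1}{2}\ln(\mathfrak{c}_{2\tau-1})$, which completes the proof. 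Note that here the halving follows immediately from periodicity, whereas in the even case it was deduced from the internal symmetry of $\{P_n\}$ within a single period.
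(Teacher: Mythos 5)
Your proof is correct and takes essentially the same route as the paper: the same telescoping decomposition of $\delta(\qfelia_0,\qfelia_{2\tau})$ into adjacent distances, the same rationalization via $N - P_i^2 = Q_i Q_{i-1} > 0$, the same appeal to the periodicity of $\{Q_n\}_{n\ge 0}$ to turn the denominator into $\prod_{i=1}^{2\tau} Q_i^2$, Equation \eqref{quinta} for the value $\ln(\mathfrak{c}_{2\tau-1})$, and the periodicity of $\{P_n\}_{n\ge 1}$ for the halving, with your version merely spelling out the last two steps that the paper compresses into one line. Your only slip is in a side remark: in the even-period Theorem \ref{distanzaciclopari} the parity of $\tau$ is likewise not needed for the denominator, since periodicity alone gives $\prod_{i=1}^{\tau} Q_{i-1} = \prod_{i=1}^{\tau} Q_i$; parity genuinely enters only in Lemma \ref{lemmagamma1.44}, where the factors carry signs.
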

\begin{proof}
The distance between $\qfelia_{2\tau}$ and $\qfelia_{0}$ is the summation
\[d(\qfelia_{0},\qfelia_{2\tau}) = \sum_{i=0}^{2\tau -1} d(\qfelia_i, \qfelia_{i+1}) = \sum_{i=1}^{2\tau} \frac{1}{2} \ln \left( \frac{\sqrt{N} + P_i}{\sqrt{N} - P_i} \right) = \frac{1}{2} \ln \left ( \prod_{i=1}^{2\tau} \frac{\sqrt{N} + P_i}{\sqrt{N} - P_i} \right ). \]
Recalling that $N- P_i^2 = Q_i Q_{i-1} >0$, and taking into account the periodicity of the sequence $\{Q_n\}_{n \ge 0}$, the last expression can be written with rational denominator as
\[\frac{1}{2} \ln \left ( \prod_{i=1}^{2\tau} \frac{(\sqrt{N} + P_i)^2}{Q_i Q_{i-1}} \right ) = \frac{1}{2} \ln \left ( \prod_{i=1}^{2\tau} \frac{(\sqrt{N} +P_i)^2}{ Q_i^2} \right ) = \ln \left ( \prod_{i=1}^{2\tau} \frac{\sqrt{N} + P_i}{ Q_i} \right ).\]
The conclusion follows from Equation \eqref{quinta} and the periodicity of $\{ P_n \}_{n \ge 1}$.
\end{proof}

\begin{corollary}\label{corollariodistanzadispari}
If $\tau$ is odd, the distance $\delta(\qfelia_0, \qfelia_{(\tau -1)/2})$ (the distance of a target form) is equal to $\ln(\mathfrak{c}_{2\tau -1})/4 + O(\ln (N))$.
\end{corollary}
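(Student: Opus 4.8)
The plan is to build directly on the exact distance formula obtained in the proof of Theorem~\ref{distanzaciclodispari}. Writing $f(i) = \frac{1}{2}\ln\!\left(\frac{\sqrt{N}+P_i}{\sqrt{N}-P_i}\right)$, that proof shows $\delta(\qfelia_0,\qfelia_n) = \sum_{i=1}^{n} f(i)$ for every $n\ge 0$ (each summand being the one-step distance $\delta(\qfelia_{i-1},\qfelia_i)$), and in particular that $\delta(\qfelia_0,\qfelia_\tau) = \frac{1}{2}\ln(\mathfrak{c}_{2\tau-1})$ when $\tau$ is odd. The target distance is then just the partial sum $\delta(\qfelia_0,\qfelia_{(\tau-1)/2}) = \sum_{i=1}^{(\tau-1)/2} f(i)$, so the whole corollary reduces to comparing this partial sum with half of the sum over a full block of $\tau$ indices.

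Next I would invoke the symmetry of $\{P_n\}$ from Theorem~\ref{teoomega}, namely $P_{\tau-i+1}=P_i$, which gives $f(\tau-i+1)=f(i)$ for $1\le i\le \tau$. Since $\tau$ is odd, pairing the index $i$ with $\tau-i+1$ leaves exactly one fixed index $i=(\tau+1)/2$, so $\sum_{i=1}^{\tau} f(i) = 2\sum_{i=1}^{(\tau-1)/2} f(i) + f\!\left(\tfrac{\tau+1}{2}\right)$. Solving for the partial sum and substituting $\sum_{i=1}^{\tau} f(i)=\delta(\qfelia_0,\qfelia_\tau)=\frac{1}{2}\ln(\mathfrak{c}_{2\tau-1})$ yields $\delta(\qfelia_0,\qfelia_{(\tau-1)/2}) = \frac{1}{4}\ln(\mathfrak{c}_{2\tau-1}) - \frac{1}{2} f\!\left(\tfrac{\tau+1}{2}\right)$, so it only remains to prove that the single central term is $O(\ln N)$.

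The main (and really the only nontrivial) point is bounding $f\!\left(\tfrac{\tau+1}{2}\right)$. Setting $P=P_{(\tau+1)/2}$ and $Q=Q_{(\tau+1)/2}$, the recurrence $N-P^2=Q_{(\tau+1)/2}Q_{(\tau-1)/2}$ together with the symmetry $Q_{(\tau-1)/2}=Q_{(\tau+1)/2}$ from Theorem~\ref{teodelta} (equivalently, the sum-of-squares identity $N=Q^2+P^2$ of Theorem~\ref{oddperiodsumsquares}) simplifies the argument of the logarithm: $\frac{\sqrt{N}+P}{\sqrt{N}-P} = \frac{(\sqrt{N}+P)^2}{N-P^2} = \frac{(\sqrt{N}+P)^2}{Q^2}$, whence $f\!\left(\tfrac{\tau+1}{2}\right) = \ln\!\left(\frac{\sqrt{N}+P}{Q}\right)$. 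Finally, Proposition~\ref{boundsPQ} gives $0\le P<\sqrt{N}$ and $Q\ge 1$ (and $Q\le\sqrt{N}$ since $Q^2=N-P^2\le N$), so $1\le \frac{\sqrt{N}+P}{Q}\le 2\sqrt{N}$ and hence $0\le f\!\left(\tfrac{\tau+1}{2}\right)\le \ln(2\sqrt{N})=O(\ln N)$. This makes the error term $\frac{1}{2}f\!\left(\tfrac{\tau+1}{2}\right)$ an $O(\ln N)$ quantity and completes the proof. The subtlety to watch is that the raw factor $\sqrt{N}-P$ could a priori be small; the simplification via $N-P^2=Q^2$ is exactly what prevents the central term from blowing up.
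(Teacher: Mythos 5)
Your proof is correct and takes essentially the same route as the paper's: both invoke Theorem~\ref{distanzaciclodispari} for $\delta(\qfelia_0,\qfelia_{\tau})=\frac{1}{2}\ln(\mathfrak{c}_{2\tau-1})$, use the symmetry $P_{\tau-m+1}=P_m$ to obtain $\delta(\qfelia_0,\qfelia_{\tau}) = 2\,\delta(\qfelia_0,\qfelia_{(\tau-1)/2}) + \frac{1}{2}\ln\left(\frac{\sqrt{N}+P_{(\tau+1)/2}}{\sqrt{N}-P_{(\tau+1)/2}}\right)$, and then bound the single central term by $O(\ln N)$. Your explicit justification of that last bound via $N-P_{(\tau+1)/2}^2=Q_{(\tau+1)/2}^2$ is a nice detail the paper leaves implicit (it simply asserts the $\frac{1}{4}\ln(4N)$ estimate, which already follows from $N-P^2\ge 1$), but the argument is the same.
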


\begin{proof}
From the previous theorem, we know that $\delta(\qfelia_0, \qfelia_{\tau})=\ln(\mathfrak{c}_{2\tau -1})/2$. Moreover, using the symmetry \eqref{simmetriap}, we obtain the following equality
\[\delta(\qfelia_0,\qfelia_{\tau}) = 2 \delta(\qfelia_0,\qfelia_{(\tau - 1)/2}) + \frac{1}{2} \ln \left( \frac{\sqrt{N} + P_{(\tau + 1)/2}}{\sqrt{N} - P_{(\tau + 1)/2}} \right).\]
Therefore,
\[\begin{aligned}\delta(\qfelia_0,\qfelia_{(\tau - 1)/2}) &= \frac{\delta(\qfelia_0,\qfelia_{\tau})}{2} - \frac{1}{4}\ln \left( \frac{\sqrt{N} + P_{(\tau + 1)/2}}{\sqrt{N} - P_{(\tau + 1)/2}} \right) \\
&= \frac{1}{4} \ln(\mathfrak{c}_{2\tau -1}) - \frac{1}{4}\ln \left( \frac{\sqrt{N} + P_{(\tau + 1)/2}}{\sqrt{N} - P_{(\tau + 1)/2}} \right), \end{aligned}\]
and so
\[ \left | \delta(\qfelia_0,\qfelia_{(\tau - 1)/2}) - \frac{1}{4} \ln(\mathfrak{c}_{2\tau -1}) \right | \le \frac{1}{4} \ln (4N).\]\end{proof}

The following remark is fundamental from a computational point of view, because it provides an upper bound on the distance of a cycle in $\mathbf{\Upsilon}$.

\begin{remark}\label{remarkregulator}
\emph{
We have that $R^{+}(N) = n R(N)$, with $n \le 6$. Hua, in \cite[p.~329]{hua1982introduction}, proves that \[R(N) \le \begin{cases}\sqrt{N}\left (\frac{1}{2}\ln N + 1 \right ) & \text{if }N \equiv 1 \pmod{4} \\
2\sqrt{N}\left (\frac{1}{2}\ln (4N) + 1 \right ) & \text{if }N \equiv 3 \pmod{4}\end{cases} \]
and thus $R^{+}(N) = O(\sqrt{N} \ln N)$. However, we do not know which is the largest value that $R(N)$ can attain as a function of $N$. It is conjectured that there exists an infinite set of values of $N$ such that $R(N) \gg \sqrt{N} \ln \ln N$ (see \cite{boundsregulatorjac} for large-scale numerical experiments and more details).}
\end{remark}

\section{The factorization algorithm}\label{sec:alg}
In this section, we present our factorization method.
The integer $N>0$ to be factorized is odd, nonsquare and composite.
In the first part of this section, we describe the method and provide the pseudocodes (Algorithms \ref{alg_nostro_1} and \ref{alg_nostro}). We then prove the correctness of our approach and analyze its computational cost.
This method is a modification of the one presented by Elia \cite{elia2019}. 
We assume that $R^{+}(N)$ has been preliminarily computed. In the final part of this section, we mention a method for computing an integer multiple of $R^{+}(N)$.

To simplify the notation, we introduce the following definition.

\begin{definition}
Given two forms $\qfelia_{n}, \qfelia_{m} \in \mathbf{\Upsilon}$, the \textit{giant step} of $\qfelia_{n}$ and $\qfelia_{m}$ is the composition \[\qfelia_{n}\bullet \qfelia_{m} = \rho^r(\qfelia_{n} \circ \qfelia_{m}),\] realized through the Gauss composition $\qfelia_{n} \circ \qfelia_{m}$, followed by the minimum number $r$ of reduction operations $\rho$ to obtain a reduced form. The notation $\qfelia_{n}^{t}$ represents $t$ successive applications of the giant step of $\qfelia_{n}$ with itself, i.e., $\qfelia_{n} \bullet \cdots \bullet \qfelia_{n}$ (repeated $t$ times).
\end{definition}

We define our method for both the even-period and odd-period cases. To enhance readability, we define the following quantity
\[\dist(N) = \begin{cases} R^{+}(N)/2 & \text{if } \tau \equiv 0 \pmod{2}\\
R^{+}(N)/4 & \text{if } \tau \equiv 1 \pmod{2}\\\end{cases},\]
which represents the distance of the quadratic form we want to reach (or an approximation of it). Indeed, if $\tau \equiv 0 \pmod{2}$, then $\delta(\qfelia_0, \qfelia_{\tau/2}) = R^{+}(N)/2 = \dist(N)$ and if $\tau \equiv 1 \pmod{2}$, then $\delta(\qfelia_0, \qfelia_{(\tau - 1)/2}) = R^{+}(N)/4 + O(\ln(N)) = \dist(N) + O(\ln(N))$, using Theorem \ref{distanzaciclopari} and Corollary \ref{corollariodistanzadispari}.

We distinguish two cases: $R^{+}(N) \le (\ln N)^2$ and $R^{+}(N) > (\ln N)^2$. In the first case, we compute $\qfelia_i = \rho^i(\qfelia_0)$ until a nontrivial factor of $N$ is found among their coefficients, if such a factor exists. By Proposition \ref{remdistrho}, the number of reduction steps $\rho$ is at most $\frac{2 \delta(\qfelia_0, \qfelia_{\tau/2})}{\ln 2} + 1 = \frac{R^{+}(N)}{\ln 2} + 1 = O((\ln N)^2)$ when the period is even, and $\frac{2 \delta(\qfelia_0, \qfelia_{(\tau - 1)/2})}{\ln 2} + 1 \le \frac{R^{+}(N)}{2 \ln 2} + \frac{\ln (4N)}{2 \ln 2} + 1 = O((\ln N)^2)$, when the period is odd. If the number of iterations exceeds $\frac{R^{+}(N)}{\ln 2} + \frac{\ln (4N)}{2 \ln 2} + 1$ the procedure is stopped: our algorithm cannot find a factor of $N$. The pseudocode for this method is given in Algorithm \ref{alg_nostro_1}.

If $R^{+}(N) > (\ln N)^2$ we proceed in the following way.

\begin{enumerate}
    \item \textbf{First phase}: In this phase we compute an approximation of $\qfelia_{\tau/2}$, if $\tau$ is even, or of $\qfelia_{(\tau - 1)/2}$, if $\tau$ is odd. By an approximation of $F \in \mathbf{\Upsilon}$, we mean a form $G \in \mathbf{\Upsilon}$ such that either $\delta(F, G)$ or $\delta(G, F)$ is small.

    Starting from $\qfelia_0$, we compute the forms $\qfelia_i$ in the principal cycle, for $i=0, \ldots, \ell$, until $\delta(\qfelia_0, \qfelia_{\ell}) \ge 2\ln (4N) +1$ and $\delta(\qfelia_0, \qfelia_{\ell}) \le 4\ln N$ (this is possible using Proposition \ref{remdistrho} and the definition of distance). Then, we compute the quadratic forms $\qfelia_{\ell}^{2^i}$, using giant steps, and their exact distance $d_i = \delta(\qfelia_0, \qfelia_{\ell}^{2^i})$, using Proposition \ref{propquadrato}, for $i=1, \ldots, t$, with $t$ such that $d_{t-1} \le \dist(N) < d_t$. We point out the fact that $d_{i+1}>d_i$ for all $i \ge 0$. 
    Then, using the forms $\qfelia_{\ell}, \ldots, \qfelia_{\ell}^{2^{t-1}}$, we compute $\Bar{F}$, which approximates $\qfelia_{\tau/2}$ if the period is even, or $\qfelia_{(\tau - 1)/2}$ otherwise.
    To do so, first we set $\Bar{F}= \qfelia_{\ell}^{2^{t -1}}$ and $\Bar{d} = d_{t-1}$. 
    Then, we start by computing $\Bar{d} + d_{t-2}$: if it is smaller or equal than $\dist(N)$, we update $\Bar{F}$ with $\Bar{F} \bullet \qfelia_{\ell}^{2^{t -2}}$ and $\Bar{d}$ with $\Bar{d} + d_{t-2}$. 
    We iterate this procedure for $i=t -3, \ldots, 0$ by computing $\Bar{d}+ d_{i}$, comparing it with $\dist(N)$, and, if it is smaller or equal, updating $\Bar{F}$ with $\Bar{F} \bullet \qfelia_{\ell}^{2^{i}}$ and $\Bar{d}$ with $\Bar{d} + d_{i}$. 

    \item \textbf{Second phase}: Starting from $\Bar{F}$, we iterate the operators $\rho$ and $\rho^{-1}$ until a factor of $N$ is found. An upper bound on the number of iterations of $\rho$ and $\rho^{-1}$ needed to find a factor (both in the case of even and odd period) is given by: 
    \[\Psi(R^{+}(N), N) := \frac{2}{\ln 2} \left ( 4 \ln (4N) \log_{2} \left ( \frac{R^{+}(N)}{2} \right ) + \frac{33}{4} \ln (4N) \right ) + 1.\]
    This bound derives from the results demonstrated later in this section. 
\end{enumerate}
A priori, we do not know the parity of $\tau$, so we proceed as follows (as outlined in Algorithm \ref{alg_nostro}). First, we run the procedure assuming $\tau \equiv 0 \pmod{2}$, which implies $\dist(N) = R^{+}(N)/2$. If, at the end of the second phase, after $\Psi(R^{+}(N), N)$ steps, a factor is not found, we then try again assuming $\tau \equiv 1 \pmod{2}$, which implies $\dist(N) = R^{+}(N)/4$. If, even in this case, no factor is found after $\Psi(R^{+}(N), N)$ steps during the second phase, the output is $-1$: our method cannot factor $N$.

\IncMargin{1.5em}
\begin{algorithm}[h]
	\caption{Our method, assuming $R^{+}(N) \le (\ln N)^2$ known}\label{alg_nostro_1}
	\SetKwData{Left}{left}
	\SetKwData{This}{this}
	\SetKwData{Up}{up}
	\SetKwFunction{Union}{Union}
	\SetKwFunction{FindCompress}{FindCompress}
	\SetKwInOut{Input}{Input}
	\SetKwInOut{Output}{Output}
        \SetKwComment{Comment}{$\triangleright$ }{ }
	\Input{An odd, composite nonsquare integer $N>0$; $R^{+}(N)$.}
	\Output{A factor of $N$ if the method is applicable; $-1$ otherwise.}
	\BlankLine
	$a_0 \gets \lfloor \sqrt{N} \rfloor$, $F \gets (1, 2a_0, a_0^2 -N) = (Q_0, 2P_1, -Q_1)$
    
    $i \gets 1$, $i_{max} \gets \frac{R^{+}(N)}{\ln 2} + \frac{\ln (4N)}{2 \ln 2}  + 1$

    \While{$i \le i_{max}$}{

        \If{$\gcd (Q_i, N)>1$}{
            \Return $\gcd (Q_i, N)$
        }

    $i \gets i+1$
    
    $F \gets \rho(F)= ((-1)^{i-1}Q_{i-1}, 2P_i, (-1)^{i}Q_i)$

    }
    
    \Return $-1$

\end{algorithm}
\DecMargin{1.5em}

\clearpage

\IncMargin{1.5em}
\begin{algorithm}[H]
	\caption{Our method, assuming $R^{+}(N) > (\ln N)^2$ known}\label{alg_nostro}
	\SetKwData{Left}{left}
	\SetKwData{This}{this}
	\SetKwData{Up}{up}
	\SetKwFunction{Union}{Union}
	\SetKwFunction{FindCompress}{FindCompress}
	\SetKwInOut{Input}{Input}
	\SetKwInOut{Output}{Output}
        \SetKwComment{Comment}{$\triangleright$ }{ }
	\Input{An odd, composite nonsquare integer $N>0$; $R^{+}(N)$.}
	\Output{A factor of $N$ if the method is applicable; $-1$ otherwise.}
	\BlankLine

    $i_{max} \gets \frac{2}{\ln 2} \left ( 4 \ln (4N) \log_{2} \left ( \frac{R^{+}(N)}{2} \right ) + \frac{33}{4} \ln (4N) \right ) + 1$

    $a_0 \gets \lfloor \sqrt{N} \rfloor$, $G_0 \gets (1, 2a_0, a_0^2 -N)$, $d_0 \gets \frac{1}{2} \ln \left | \frac{a_0 + \sqrt{N}}{a_0 - \sqrt{N}} \right |$, $\mathcal{F} \gets \emptyset$

    \While{$d_0 < 2 \ln (4N) +1$}{
    $G_0 \gets \rho(G_0)= (a,b,c)$

    $d_0 \gets d_0 + \frac{1}{2} \ln \left | \frac{b + \sqrt{4N}}{b - \sqrt{4N}} \right |$

    }
 
    \For{$j=1,2$}{

    $\dist(N) \gets R^{+}(N)/2^j$, $\mathcal{F} \gets \{ (G_0 , d_0) \}$, $i \gets 0$

    \While{$d_i \le \dist(N)$}{
    $G_{i+1} \gets G_{i} \circ G_{i} = (a,b,c)$

    $d_{i+1} \gets 2 d_{i}$

    \While{$G_{i+1}$ not reduced}{
        $d_{i+1} \gets d_{i+1} + \frac{1}{2} \ln \left | \frac{b + \sqrt{4N}}{b - \sqrt{4N}} \right |$

        $G_{i+1} \gets \rho(G_{i+1}) = (a,b,c)$
    }

    $\mathcal{F} \gets \mathcal{F} \cup \{ (G_{i+1} , d_{i+1}) \}$

    $i \gets i+1$
    }

    $t \gets i$, $\Bar{d} \gets d_{t -1}$, $\Bar{F} \gets G_{t -1}$
    
    \For{$i=t-2, \ldots, 0$}{
        \If{$\Bar{d} + d_i \le \dist(N)$}{
            $\Bar{F} \gets \Bar{F} \bullet G_i$ 
            
            $\Bar{d} \gets \Bar{d} + d_i$
        }
    }  

    $H \gets \rho(\Bar{F})=(a,b,c)$, $K \gets \rho^{-1}(\Bar{F})=(d,e,f)$

    \For{$i=0, \ldots, i_{max}$}{
        \uIf{$\gcd(c, N)>1$}{
            \Return $\gcd(c, N)$
        }
        \uElseIf{$\gcd(f, N)>1$}{
            \Return $\gcd(f, N)$
        }
        \Else{
            $H \gets \rho(H)= (a,b,c)$
        
            $K \gets \rho^{-1}(K)= (d,e,f)$
        }
    }
  
    }

    \Return $-1$

\end{algorithm}
\DecMargin{1.5em}

\ 

In what follows, we present two propositions that play a fundamental role in the analysis of our method for the case $R^{+}(N)> (\ln N)^2$.
The first shows that $t$ (the number of powers of $G_0$) is always ``small", the second proves that our approximation of $\qfelia_{\tau/2}$, if $\tau$ even, or $\qfelia_{(\tau - 1)/2}$ if $\tau$ is odd, is good.

\begin{proposition}\label{propt}
The value of $t$ in Algorithm \ref{alg_nostro} is at most $\left \lceil \log_{2} \dist(N) \right \rceil$.
\end{proposition}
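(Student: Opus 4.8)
The plan is to prove that the distances $d_i = \delta(\qfelia_0, G_i)$ maintained in Algorithm \ref{alg_nostro} grow at least geometrically in $i$, and then to read off the bound on $t$ by inverting this growth. Recall that the inner \textbf{while} loop stops at the first index with $d_t > \dist(N)$, so $t$ is characterised by $d_{t-1} \le \dist(N) < d_t$. Since the base form $G_0 = \qfelia_\ell$ is chosen with $d_0 \le 4\ln N < \dist(N)$ (as $\dist(N) \ge R^{+}(N)/4 > (\ln N)^2/4$), the loop executes at least once, so $t \ge 1$; were this not the case, the asserted bound would be trivial.

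The first step is to write down the recurrence satisfied by the $d_i$. Because each $G_{i+1}$ is produced by composing $G_i$ with itself and then reducing, Proposition \ref{propquadrato} gives
\[ d_{i+1} = \delta(\qfelia_0, G_{i+1}) = 2\,\delta(\qfelia_0, G_i) + \delta(G_i \circ G_i, G_{i+1}) = 2 d_i + \delta(G_i \circ G_i, G_{i+1}), \]
and the reduction term is controlled by \eqref{distanzariduzione}, namely $\left| \delta(G_i\circ G_i, G_{i+1}) \right| < 2\ln(4N)$. Hence $d_{i+1} > 2 d_i - 2\ln(4N)$.

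The key device is to absorb this additive slack by a shift. Setting $e_i = d_i - 2\ln(4N)$, the inequality above becomes the clean geometric recurrence $e_{i+1} > 2 e_i$. The initialization loop guarantees exactly $d_0 \ge 2\ln(4N) + 1$, so $e_0 \ge 1$, and an immediate induction yields $e_i \ge 2^i$ and therefore $d_i > 2^i$ for every $i \ge 0$. Feeding $d_{t-1} \le \dist(N)$ into $d_{t-1} > 2^{t-1}$ gives $2^{t-1} < \dist(N)$, i.e.\ $t - 1 < \log_2 \dist(N) \le \lceil \log_2 \dist(N)\rceil$; since $t-1$ is an integer this forces $t - 1 \le \lceil\log_2\dist(N)\rceil - 1$, that is $t \le \lceil \log_2 \dist(N)\rceil$, as claimed.

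The step I expect to be the crux is handling the sign of the reduction distance. One is tempted to argue that each giant step exactly doubles the distance, but $\delta(G_i\circ G_i, G_{i+1})$ can be negative, so $d_{i+1}$ need not exceed $2 d_i$ and the bare recurrence does not close. The whole argument hinges on two facts working in tandem: that this negative slack is bounded by $2\ln(4N)$ via \eqref{distanzariduzione}, and that the algorithm is set up so that $d_0 \ge 2\ln(4N)+1$. The shift $e_i = d_i - 2\ln(4N)$ is precisely tuned to cancel the slack and leave a genuinely doubling sequence with $e_0 \ge 1$; everything else is bookkeeping, including the careful but routine passage from the strict inequality $2^{t-1} < \dist(N)$ to the ceiling bound.
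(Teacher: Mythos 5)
Your proof is correct and follows essentially the same route as the paper: both derive the recurrence $d_{i+1} > 2d_i - 2\ln(4N)$ from Proposition \ref{propquadrato} together with the bound \eqref{distanzariduzione}, and both exploit the initialization $d_0 \ge 2\ln(4N)+1$ to obtain the geometric growth $d_i > 2^i$, from which the ceiling bound on $t$ follows. Your shift $e_i = d_i - 2\ln(4N)$ is simply a repackaging of the paper's direct unrolling of the recurrence via the geometric sum $2^i d_0 - 2(2^i-1)\ln(4N) \ge 2^i + 2\ln(4N)$.
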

\begin{proof}
Using Proposition \ref{propquadrato} and the above notation, we have that
\[\delta(\qfelia_0, G_i) > 2\delta(\qfelia_0, G_{i-1}) - 
2\ln (4N) \quad \forall \, i >0, \]
and so
\[\begin{aligned}
\delta(\qfelia_0, G_i) &> 2^i\delta(\qfelia_0, G_{0}) - 
2 \sum_{k=0}^{i-1} 2^k \ln(4N) \\
&= 2^i\delta(\qfelia_0, G_{0}) - 2(2^i -1)\ln(4N) \\
&\ge 2^i(2 \ln(4N) +1) - 2(2^i -1)\ln(4N) \\
&= 2^i + 2\ln(4N).
\end{aligned}\]
Therefore, for $i \ge \left \lceil \log_2 \dist(N) \right \rceil$, we have $\delta(\qfelia_0, G_i) > \dist(N)$.
\end{proof}

This proposition implies that $t = O(\ln N)$, thanks to Remark \ref{remarkregulator}.

\begin{proposition}\label{distanzafbar}
Let $\Bar{F}$ be the quadratic form obtained at the end of the second phase of the method (using the notation of Algorithm \ref{alg_nostro}). Then, the following holds
\[\left | \dist(N) - \delta(\qfelia_0, \Bar{F}) \right | = O((\ln N)^2).\]
\end{proposition}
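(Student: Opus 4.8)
The plan is to bound separately the two ways in which $\delta(\qfelia_0, \Bar{F})$ can deviate from $\dist(N)$: first the \emph{greedy approximation error}, measuring how close the tracked quantity $\Bar{d}$ is to $\dist(N)$, and then the \emph{reduction error}, measuring how far the true distance $\delta(\qfelia_0, \Bar{F})$ drifts from $\Bar{d}$ because each giant step is followed by reductions. The basic estimate underlying both bounds comes from Proposition \ref{propquadrato}: since $G_{i+1}$ is obtained from $G_i \circ G_i$ by reduction, writing $d_i = \delta(\qfelia_0, G_i)$ we have
\[ d_{i+1} = 2 d_i + \varepsilon_i, \qquad |\varepsilon_i| < 2 \ln(4N), \]
so that consecutive distances satisfy $d_{i+1} - d_i = d_i + \varepsilon_i < d_i + 2\ln(4N)$; moreover, by Proposition \ref{propt} together with Remark \ref{remarkregulator}, the number of powers is $t = O(\ln N)$.

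For the greedy error I track the gap $g := \dist(N) - \Bar{d}$, which is well defined and nonnegative throughout, since a term $d_i$ is added only when $\Bar{d} + d_i \le \dist(N)$. Before the greedy loop $\Bar{d} = d_{t-1}$, and $d_{t-1} \le \dist(N) < d_t$ gives the initial bound $g < d_t - d_{t-1} = d_{t-1} + \varepsilon_{t-1} < d_{t-1} + 2\ln(4N)$. Processing index $i$ (for $i = t-2, \ldots, 0$) either leaves $g$ unchanged when $g < d_i$ (so that $g < d_i$ afterward), or replaces $g$ by $g - d_i$ when $g \ge d_i$. Feeding the bound $g < d_{i+1} + e_{i+1}$ valid after index $i+1$ into this case analysis and using $d_{i+1} - d_i < d_i + 2\ln(4N)$, one verifies by induction that after processing index $i$ the gap obeys $g < d_i + e_i$ with $e_i = e_{i+1} + 2\ln(4N)$ and $e_{t-1} = 2\ln(4N)$, whence $e_0 = 2t\ln(4N)$. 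Since $d_0 = O(\ln N)$ by the choice of $G_0$ in the first phase, the final gap satisfies $0 \le \dist(N) - \Bar{d} < d_0 + 2t\ln(4N) = O((\ln N)^2)$.

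For the reduction error I compare the recorded value $\Bar{d}$ with the genuine distance. Initially $\Bar{F} = G_{t-1}$ and $\Bar{d} = d_{t-1} = \delta(\qfelia_0, G_{t-1})$ agree exactly (the reductions used to build each $G_i$ are already absorbed into the exactly computed $d_i$). Each executed update $\Bar{F} \gets \Bar{F} \bullet G_i$ sets $\Bar{d} \gets \Bar{d} + d_i$, whereas by Proposition \ref{propquadrato} the true distance changes by $d_i + \delta(\Bar{F} \circ G_i, \Bar{F} \bullet G_i)$ with $|\delta(\Bar{F} \circ G_i, \Bar{F} \bullet G_i)| < 2\ln(4N)$. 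Summing the at most $t-1$ such updates gives $|\delta(\qfelia_0, \Bar{F}) - \Bar{d}| < (t-1)\, 2\ln(4N) = O((\ln N)^2)$, and combining the two estimates through the triangle inequality yields $|\dist(N) - \delta(\qfelia_0, \Bar{F})| = O((\ln N)^2)$.

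I expect the main obstacle to be the greedy analysis: one must verify that the approximation gap does not accumulate faster than linearly in $t$ despite the $\pm 2\ln(4N)$ perturbations in the near-doubling relation $d_{i+1} \approx 2 d_i$, which is precisely what makes the ``digits'' $d_0, \ldots, d_{t-1}$ behave like a usable approximate binary scale. Once the invariant $g < d_i + e_i$ is established, the two error contributions are routine and both collapse to $O((\ln N)^2)$, because $t = O(\ln N)$ and every individual perturbation is $O(\ln N)$.
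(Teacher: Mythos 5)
Your proposal is correct, and it shares the paper's overall architecture: the same decomposition of the error into a greedy approximation gap $|\dist(N) - \Bar{d}|$ and a reduction drift $|\Bar{d} - \delta(\qfelia_0, \Bar{F})|$, with the drift half handled identically (at most $t-1$ giant steps, each contributing less than $2\ln(4N)$ by Proposition \ref{propquadrato}, and $t = O(\ln N)$ by Proposition \ref{propt} and Remark \ref{remarkregulator}). Where you genuinely diverge is in the greedy half. The paper argues by cases on the set $I$ of accepted indices: if $0 \notin I$, the failed test at index $0$ gives $0 \le \dist(N) - \Bar{d} < d_0$ immediately; if $0 \in I$, it sets $j = \min\{\, i : i \notin I \,\}$, telescopes the near-doubling relation $2d_i = d_{i+1} + O(\ln N)$ up through index $j$, and invokes the failed greedy test at $j$ to conclude $0 \le \dist(N) - \Bar{d} \le d_0 - \gamma(N) = O((\ln N)^2)$. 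You instead maintain a loop invariant, $g < d_i + 2(t-i)\ln(4N)$ after index $i$ is processed, verified by downward induction exactly as you describe (the update case uses $d_{i+1} - d_i = d_i + \varepsilon_i$ with $|\varepsilon_i| < 2\ln(4N)$, the skip case is trivial). Both arguments rest on the same two ingredients — the doubling-with-error relation from Proposition \ref{propquadrato} and the greedy acceptance test — but your invariant buys two things: an explicit final constant, $0 \le \dist(N) - \Bar{d} < d_0 + 2t\ln(4N)$, rather than an accumulated unspecified $O((\ln N)^2)$, and uniform coverage of the boundary case $I = \{0, \ldots, t-1\}$ (every index accepted), in which the paper's minimal skipped index $j$ does not exist and its assertion $1 \le j \le t-2$ silently fails; conversely, the paper's case split makes more transparent exactly where the failed greedy test enters the estimate. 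Your side remarks are also accurate: the $d_i$ computed by Algorithm \ref{alg_nostro} are the exact distances $\delta(\qfelia_0, G_i)$ since the per-$\rho$ increments are accumulated during reduction, and $d_0 = O(\ln N)$ by the first-phase stopping rule together with Proposition \ref{remdistrho}.
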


\begin{proof}
In the for loop at line 20 of Algorithm \ref{alg_nostro}, we have at most $t-1$ giant steps. Therefore, using the previous proposition and Proposition \ref{propquadrato}, we have that
$$\left | \Bar{d} - \delta(\qfelia_0, \Bar{F}) \right | = O((\ln N)^2).$$
Now, we prove that $\left | \Bar{d}- \dist(N) \right | = O((\ln N)^2)$.
We define $I \subseteq \{0, \ldots, t-1 \}$ the set of indexes of the distances $d_0, \ldots, d_{t-1}$ that appear in the computation of $\Bar{d}$, i.e.
\[\Bar{d} = \sum_{i \in I} d_i.\]
We distinguish two cases:
\begin{itemize}
    \item Case $0 \notin I$: Then we have $\Bar{d} + d_0 > \dist(N)$, and so
    \[\Bar{d} \le \dist(N) < \Bar{d} + d_0, \]
    from which we deduce that
    \[0 \le \dist(N) - \Bar{d} < d_0 \le 4 \ln N.\]
    \item Case $0 \in I$: Let $j = \min \{ i \in 	\mathbb{N} \mid i \notin I \}$, then $1 \le j \le t-2$. We have that
    \[\begin{aligned} \Bar{d} + d_0 &= \sum_{i \in I\setminus \{ 0 \}} d_i + 2 d_0 \\
    &= \sum_{i \in I\setminus \{ 0 \}} d_i + d_1 + O(\ln N) \\
    &= \sum_{i \in I\setminus \{ 0,1 \}} d_i + 2d_1 + O(\ln N) \\
    &= \sum_{i \in I\setminus \{ 0,1 \}} d_i + d_2 + O(\ln N) \\
    &\ \ \vdots \\
    &= \sum_{i \in I\setminus \{ 0, \ldots, j-1\}} d_i + d_j + \gamma(N) \\
    \end{aligned} \]
    where $\gamma(N)=O((\ln N)^2)$.
    By construction, we have that \[\sum_{i \in I\setminus \{ 0, \ldots, j-1\}} d_i + d_j > \dist(N),\] from which
    \[\Bar{d} \le \dist(N) \le \Bar{d} + d_0 - \gamma(N),\]
    and so 
    \[0 \le \dist(N) - \Bar{d} \le d_0 - \gamma(N) = O((\ln N)^2).\]
\end{itemize}
Therefore,
\[ \left |\dist(N) - \delta(\qfelia_0, \Bar{F}) \right | \le \left |\dist(N) - \Bar{d} \right | + \left | \Bar{d} - \delta(\qfelia_0, \Bar{F}) \right | = O((\ln N)^2). \]
\end{proof}
Therefore, if $\tau$ is even, then $\left | \delta(\qfelia_0, \qfelia_{\tau /2}) - \delta(\qfelia_0, \Bar{F}) \right | = O((\ln N)^2)$. If $\tau$ is odd, we have that \[\begin{aligned}\left | \delta(\qfelia_0, \qfelia_{(\tau - 1 )/2}) - \delta(\qfelia_0, \Bar{F}) \right | &= \left | \dist(N) -\delta(\qfelia_0, \Bar{F}) \right | +  O(\ln N) = O((\ln N)^2)\end{aligned}\]
since $\delta(\qfelia_0, \qfelia_{(\tau - 1 )/2}) = \dist(N) + O(\ln N)$ by Corollary \ref{corollariodistanzadispari}.
We analyze the computational cost of this algorithm, assuming that $R^{+}(N)> (\ln N)^2$ is preliminarily computed. The cost of the computation of the form $G_0$ such that $\delta(\qfelia_0, G_0) \ge 2 \ln(4N)+1$ is at most $\frac{2(2 \ln(4N)+1)}{\ln 2} + 1$, using Proposition \ref{remdistrho}.

We then analyze the cost of the while loop at line 9. The number $t$ of the giant steps is at most $\left \lceil \log_{2} \dist(N) \right \rceil = O(\ln N)$, proved in Proposition \ref{propt}. Each giant step requires: $O(\ln N)$ elementary operations for the extended Euclidean algorithm, used to compute $s$, $u$ and $v$ in the Gauss composition, and at most $O(\ln N)$ applications of $\rho$. 
Indeed, if we apply the Gauss composition of two forms in $\mathbf{\Upsilon}$, using the extended Euclidean algorithm, we obtain $(a,b,c)$ such that $\left | c \right | = O(N^4)$. This follows from Proposition \ref{boundsPQ} and the classical bounds on the solution of Bézout's identity via the extended Euclidean algorithm. Hence, by Proposition \ref{propreductiontre}, the number of applications of $\rho$ is $O(\ln N)$. Therefore, the cost of the while loop at line 9 is $O((\ln N)^2)$. 

The computation of $\Bar{F}$ requires at most $O((\ln N)^2)$ steps: at most $O(\ln N)$ giant steps, and at most $O(\ln N)$ application of $\rho$ for each giant step.

Finally, as proved in Proposition \ref{distanzafbar}, the distance between the approximation $\Bar{F}$ and $\qfelia_{\tau/2}$, or $\qfelia_{(\tau -1)/2}$, is at most $O((\ln N)^2)$, and so, using again the fact that, for each reduced form $F$, $\delta(F, \rho^2(F))> \ln 2$, are needed only $O((\ln N)^2)$ applications of $\rho$ to reach $\qfelia_{\tau/2}$ or $\qfelia_{(\tau -1)/2}$.

In conclusion, the method has a computational complexity of $O((\ln N)^2)$.
It is remarked that the cost of elementary arithmetic operations (i.e. additions, subtractions, multiplications and divisions of big integers) and logarithm valuations are not counted.

\begin{remark}\label{remarkricerca}
\emph{
Using Proposition \ref{propt}, Proposition \ref{distanzafbar}, Corollary \ref{corollariodistanzadispari}, and Proposition \ref{remdistrho}, it is possible to derive the following upper bound on the number of iterations of $\rho$ and $\rho^{-1}$ in the second phase of the method
\[\Psi(R^{+}(N), N) = \frac{2}{\ln 2} \left ( 4 \ln (4N) \log_{2} \left ( \frac{R^{+}(N)}{2} \right ) + \frac{33}{4} \ln (4N) \right ) + 1.\]
This bound holds for both the cases when $\tau$ is even and $\tau$ is odd.}
\end{remark}

\begin{remark}\label{remarkricerca2}
\emph{
We point out that it is not necessary to have $R^{+}(N)$ precomputed; it is sufficient to have an integer multiple of it: $R'(N) = k R^{+}(N)$, with $k \in \mathbb{N}$. For simplicity, we describe how the method is modified in the case of an even period. In this case, running Algorithm \ref{alg_nostro} with $R'(N)$ instead of $R^{+}(N)$ is equivalent to considering the principal cycle with multiplicity $k$ (i.e. $k$ times the principal cycle). Our target form is located in the middle of some period of distance $R^{+}(N)/2$, so:
\begin{enumerate}
    \item if $k$ is odd, a factor of $2N$ is found (as coefficient of a form) at the position at distance $\frac{k R^{+}(N)}{2}$, from the beginning;
    \item if $k$ is even, the quadratic form $\qfelia_{\tau-1}$ is found in a position at distance $\frac{k R^{+}(N)}{2}$ (which reveals a posteriori that $k$ is even); in this case, the procedure can be repeated, targeting the form at position at distance $\frac{k R^{+}(N)}{4}$ from $\qfelia_0$. Again, either a factor of $2N$ is found, or $k$ is found to be a multiple of $4$. Clearly the process can be iterated $h$ times until $\frac{k R^{+}(N)}{2^h}$ is an odd multiple of $R^{+}(N)$, and a factor of $2N$ is found.
\end{enumerate}
If $k$, as a function of $N$, is $O(N^{\alpha})$ with $\alpha$ constant, then the computational cost of the algorithm does not change. Indeed, in this case, the value of $t$ in Algorithm \ref{alg_nostro} is at most $\lceil \log_2 ( k\dist(N) ) \rceil = O(\ln N)$ (see Proposition \ref{propt}), and the number of iterations of $\rho$ and $\rho^{-1}$ is at most $\Psi(kR^{+}(N), N) = O((\ln N)^2)$. The odd-period case follows similarly.}
\end{remark}

As outlined in Remark \ref{remarkricerca2}, we are focused on studying and researching methods to efficiently calculate or approximate $R^{+}(N)$, or one of its integer multiples that is ``not too large". In particular, we are seeking a method that efficiently computes $kR^{+}(N)$, where $k$, as a function of $N$, is $O(N^{\alpha})$, with $\alpha$ constant. Since $R^{+}(N) = n R(N)$, with $n \le 6$, our problem is equivalent to finding an efficient algorithm that computes (a multiple of) the regulator of $\mathbb{Q}(\sqrt{N})$. The method due to Vollmer, described in \cite{vollmer}, currently has the best known complexity. 
It is a Monte Carlo algorithm that computes $R(N)$ in time $O \left ( \exp \left (  \frac{3}{\sqrt{8}} \sqrt{\ln N \ln \ln N} \right ) \right )$, assuming the Generalized Riemann Hypothesis (GRH). 
Other methods for computing the regulator are detailed in \cite{jacobson2008solving}.

In conclusion, using this approach for the precomputation of $R(N)$ and the algorithm previously described, we have obtained a factorization method of (conjectured) time complexity $O \left ( \exp \left (\frac{3}{\sqrt{8}}\sqrt{\ln N \ln \ln N} \right ) \right )$, which is more efficient than CFRAC and SQUFOF.

\section{Conclusions}\label{sec:concl}
We proposed a novel factorization algorithm which is polynomial-time, provided knowledge of a (not too large) multiple of the regulator of $\mathbb{Q}(\sqrt{N})$, or an accurate approximation of it. 
The problem of computing the regulator $R(N)$ lies in $\mathcal{NP} \cap \text{co-}\mathcal{NP}$, under the assumptions of the GRH and the Extended Riemann Hypothesis (ERH), as shown in \cite[Section 13.6]{jacobson2008solving}.

A natural direction for advancing our method involves studying techniques for finding good approximations of $kR(N)$, with $k$ being a positive integer.
A common approach in this line of research involves the use of the analytic class number formula
\begin{equation}\label{analyticclassnumberformula}h(N)R(N) = \sqrt{D(N)} L(1, \chi_{D(N)})\end{equation}
where $N$ is squarefree, $D(N)$ is the discriminant of $\mathbb{Q}(\sqrt{N})$, $h(N)$ is the class number of $\mathbb{Q}(\sqrt{N})$, $\chi_{D(N)}$ is the Kronecker symbol $\left ( \frac{D(N)}{\cdot} \right )$, and $L(1, \chi_{D(N)})$ is the Dirichlet $L$-function defined by
\[ L(1, \chi_{D(N)})= \sum_{n=1}^{\infty} \frac{1}{n} \left ( \frac{D(N)}{n} \right ) .\]

Determining precise bounds on $R(N)$ is a difficult problem, closely connected to the Cohen--Lenstra heuristics \cite{cohen2006heuristics}. 
Jacobson, Luke, and Williams \cite{boundsregulatorjac}, examined bounds on $R(N)$ and $L(1, \chi_{D(N)})$, reporting results from large-scale numerical experiments. An overview of the main results concerning bounds on $L(1, \chi_{D(N)})$ and $R(N)$ is further reported in  \cite[Section 9.5]{jacobson2008solving}

Two main methods have been developed to approximate $h(N)R(N)$ using \eqref{analyticclassnumberformula}.
The first is due to Bach \cite{bach1995improved} and requires the ERH for estimating the error in the approximation, which is $O(N^{2/5 + \epsilon})$. 
This method has time complexity of $O(N^{1/5 + \epsilon})$. The second one, introduced by Srinivasan \cite{srinivasan1998computations}, approximates $h(N)R(N)$ using a technique called the `Random Summation Technique', which differs from Bach’s method. 
The error in the approximation is $O(N^{2/5 + \epsilon})$ and is estimated probabilistically in expected time $O(N^{1/5 + \epsilon})$ without assuming the ERH. However, there is a small probability that the approximation may be inaccurate, requiring recomputation.

\section*{Acknowledgments}
The first author is member of GNSAGA of INdAM and acknowledges support from Ripple’s University Blockchain Research Initiative. The second author is partially supported by project SERICS (PE00000014 - https://serics.eu) under the MUR National Recovery and Resilience Plan funded by European Union - NextGenerationEu.

\printbibliography



\end{document}